\date{5 February 2013. Revised: 29 August 2013.}

\documentclass[12pt]{amsart}
\usepackage{latexsym,amsmath,amsfonts,amscd,amssymb,mathrsfs}

\newcommand{\surj}{\twoheadrightarrow}
\newcommand{\inc}{\hookrightarrow}

\newcommand{\x}{\times}
\newcommand{\ox}{\otimes}

\newcommand{\too}{\longrightarrow}
\newcommand{\la}{\langle}
\newcommand{\ra}{\rangle}

\newcommand{\Lie}[1]{\operatorname{#1}}
\newcommand{\lie}[1]{\operatorname{\mathfrak{#1}}}
\newcommand{\GL}{\Lie{GL}}

\newcommand{\SO}{\Lie{SO}}

\newcommand{\Spin}{\Lie{Spin}}
\newcommand{\SU}{\Lie{SU}}
\newcommand{\su}{\lie{su}}
\newcommand{\U}{\Lie{U}}

\DeclareMathAlphabet{\mathpzc}{OT1}{pzc}{m}{it}

\DeclareMathOperator{\rk}{rk\,}          

\DeclareMathOperator{\End}{End\,}           
\DeclareMathOperator{\id}{Id\,}             
\DeclareMathOperator{\tr}{Tr\,}             
\DeclareMathOperator{\Hol}{Hol}       
\DeclareMathOperator{\Aut}{Aut\,}           

\newcommand{\vol}{\mathrm{vol}}

\newcommand{\Sym}{\text{Sym}}

\renewcommand{\Re}{\mathrm{Re}}
\renewcommand{\Im}{\mathrm{Im}}

\newcommand{\cA}{{\mathcal A}}

\newcommand{\cD}{{\mathcal D}}
\newcommand{\cE}{{\mathcal E}}
\newcommand{\cF}{{\mathcal F}}
\newcommand{\cG}{{\mathcal G}}
\newcommand{\cH}{{\mathcal H}}

\newcommand{\cL}{{\mathcal L}}
\newcommand{\cO}{{\mathcal O}}

\newcommand{\cQ}{{\mathcal Q}}
\newcommand{\cR}{{\mathcal R}}

\newcommand{\CC}{{\Bbb C}}

\newcommand{\QQ}{{\Bbb Q}}
\newcommand{\RR}{{\Bbb R}}

\newcommand{\ZZ}{{\Bbb Z}}

\renewcommand{\a}{\alpha}

\theoremstyle{plain}
\begingroup
\newtheorem{theorem}{Theorem}
\newtheorem{corollary}[theorem]{Corollary}
\newtheorem{lemma}[theorem]{Lemma}
\newtheorem{proposition}[theorem]{Proposition}

\endgroup

\theoremstyle{definition}
\newtheorem{definition}[theorem]{Definition}

\theoremstyle{remark}
\newtheorem{remark}[theorem]{Remark}

\textwidth 6in
\oddsidemargin.25in
\evensidemargin.25in
\parskip.15cm
\baselineskip.55cm

\title[$\Spin(7)$-instantons and stable bundles on complex $4$-tori]
{$\Spin(7)$-instantons, stable bundles and the Bogomolov inequality for complex $4$-tori}

\subjclass[2000]{Primary: 14D21. Secondary: 53C07, 14K22, 32G20.}
\keywords{$\Spin(7)$-instanton, stable bundle, Bogomolov inequality, abelian variety, period matrices.}

\author{Vicente Mu\~noz}
  \address{Facultad de Matem\'{a}ticas \\ Universidad Complutense
  de Madrid \\ Plaza Ciencias 3
  \\ 28040 Madrid \\ Spain}
  \email{vicente.munoz@mat.ucm.es}

\thanks{Partially supported through grant MICINN (Spain) MTM2010-17389}
\begin{document}

\begin{abstract}
 Using gauge theory for $\Spin(7)$ manifolds of dimension $8$, we develop a procedure, 
 called $\Spin$-rotation, which transforms a
 (stable) holomorphic structure on a vector bundle over a complex torus of
 dimension $4$ into a new holomorphic structure over a different
 complex torus. We show non-trivial examples of this procedure by rotating a 
 decomposable Weil abelian variety into a non-decomposable one.
 As a byproduct, we obtain a Bogomolov type inequality, which gives restrictions
 for the existence of stable bundles on an abelian variety of dimension $4$, and
 show examples in which this is stronger than the usual Bogomolov inequality.
\end{abstract}

\maketitle

\section{Introduction} \label{sec:intro}

Let $M$ be a smooth four-dimensional compact oriented manifold, and
let $E\to M$ be a hermitian complex vector bundle. 
If $M$ is endowed with a K\"ahler structure, then 
the Hitchin-Kobayashi correspondence gives a one-to-one correspondence between 
(poly)stable holomorphic structures on $E$ and anti-self-dual connections (instantons)
on $E$. A K\"ahler structure on $M$ is basically a Riemannian metric 
with holonomy in $\U(2)\subset \SO(4)$. Under this inclusion, 
the Hermite-Einstein equation is translated into the anti-self-dual equation.
Therefore, if we know of the existence of anti-self-dual connections for $E$
(e.g.\ if the Donaldson invariant corresponding to $E$ is non-zero, which
guarantees the non-emptiness of the moduli space of anti-self-dual connections),
then for \emph{any} K\"ahler structure $\omega$ on $M$ compatible with the given metric, $E$ admits an 
structure of polystable holomorphic bundle. In particular, if $(M,\omega)$ is a
projective surface, then $c_1(E)$ is an algebraic class. However,
this does not say anything new about the algebraicity of Hodge classes for
complex projective surfaces, since the Hodge Conjecture is known for $(1,1)$-classes
(cf.\ Lefschetz theorem on $(1,1)$-classes \cite{Lefschetz}).

In \cite{Toma}, M.\ Toma uses the above idea to find stable bundles
with given Chern classes on a K\"ahler $4$-torus $Y$. By what we said above, the condition to have
a (poly)stable holomorphic structure is a condition on the Riemmanian structure of $Y$.
So one may vary the inclusion $\U(2)\subset \SO(4)$, hence changing the K\"ahler structure
on $Y$ without changing the Riemannian metric. Under these circumstances, 
the existence of a stable holomorphic structure
on a bundle $E$ for one K\"ahler structure $\omega$ on $Y$ implies the same for another 
K\"ahler structure $\omega'$. A similar approach is used in \cite{Verbitsky} for
studying stable bundles on hyperk\"ahler and hypercomplex manifolds.

\smallskip

In eight dimensions, we can try to follow the same scheme, by considering
the inclusion $\SU(4)\subset \Spin(7)$. The proposal to develop 
gauge theoretic methods in $6$, $7$ and $8$ dimensions appears in
the seminal paper of S.\ Donaldson and R.\ Thomas \cite{Donaldson-Thomas}.
In dimension $8$, it is natural to consider a (compact, oriented) manifold $M$ with
a $\Spin(7)$-structure (that is, endowed with a Riemannian metric whose holonomy lies in 
$\Spin(7)\subset \SO(8)$). For such $M$, the notion of $\Spin(7)$-instanton
is defined in \cite{Donaldson-Thomas}. Ideally, this should give rise to 
moduli spaces of $\Spin(7)$-instantons and associated invariants (in the vein
of the Donaldson invariants in four dimensions).
Basic properties of the linearized $\Spin(7)$-instanton equation 
are considered in \cite{Reyes}. Construction of non-trivial examples of
$\Spin(7)$-instantons are given in the D. Phil.\ thesis of C.\ Lewis 
\cite{Lewis}. The deep analysis of the bubbling properties of
$\Spin(7)$-instantons is undertaken by G.\ Tian in \cite{Tian}. 
However, the transversality issues related to the construction of the
moduli spaces of $\Spin(7)$-instantons seem to be out of reach at present.

An $8$-dimensional compact Riemannian manifold $M$ with holonomy in $\SU(4)$ is
a manifold with a Calabi-Yau structure (a K\"ahler manifold $(M,\omega)$ 
endowed with a global holomorphic everywhere non-zero $(4,0)$-form $\theta$). 
Let $E\to M$ be an $\SU(r)$-vector bundle.
A $\Spin(7)$-instanton on $E$ is equivalent to a polystable holomorphic
structure under the obvious topological condition $c_2(E)\in H^{2,2}(M)$. 
This result already appears in \cite{Lewis}. 
G.\ Tian points out \cite{Tian} that this could be used to prove the
Hodge conjecture if there is a way to prove the existence of $\Spin(7)$-instantons
on $E$. The observation appears very explicitly in \cite{Ramadas}, where
an (unsuccessful) attempt to construct $\Spin(7)$-instantons for some 
abelian varieties of Weil type, for which the Hodge Conjecture is yet unknown, 
is proposed.

\smallskip

In this paper we study $\Spin(7)$-instantons for K\"ahler complex tori
of dimension $4$. We consider $\SU(4)$-structures giving rise to 
the same (flat) $\Spin(7)$-structure on an $8$-torus $X=\RR^8/\Lambda$,
determining under which circumstances the existence of a (poly)stable
holomorphic structure on a vector bundle $E\to X$ for $(X,\omega,\theta)$ produces
a (poly)stable holomorphic structure on $E$ for $(X,\omega',\theta')$
(cf.\ Theorem \ref{thm:rotation}). We call 
the process of going from $(\omega,\theta)$ to $(\omega',\theta')$ a 
\emph{$\Spin$-rotation}, as it is given by conjugating the inclusion
$\SU(4)\inc \Spin(7)$ by an element in $\Spin(7)$. 
The reason that this is realisable hinges on
the fact that we do not try to prove the non-emptiness of 
the moduli spaces of $\Spin(7)$-instantons
by relating them for different metrics, we just use that the $\Spin(7)$-structure of
$X$ is fixed, so 
a given $\Spin(7)$-instanton keeps satisfying the $\Spin(7)$-instanton equation
while performing a $\Spin$-rotation.

\smallskip

The Hodge Conjecture predicts that Hodge classes (rational $(p,p)$-classes)
on a projective complex $n$-manifold are represented by (rational) algebraic
cycles (see \cite{survey} for a nice account on the subject). The Hodge Conjecture is known
to be true for dimension $n\leq 3$ and for $p=1,n-1$. The first instance
in which it is still open is $n=4$, $p=2$. A.\ Weil proposed \cite{Weil} a
family of abelian varieties of dimension $4$ (based on an example of
Mumford \cite{Mumford}) to be studied as possible counterexamples to the
Hodge Conjecture. These abelian varieties have complex multiplication 
by $L=\ZZ[\sqrt{-d}]$, $d>0$ a square-free integer,
and a subspace of Hodge $(2,2)$-classes (now known as \emph{Weil classes})
which are not products of $(1,1)$-classes, and therefore potentially 
non-algebraic. Surprisingly, for the case of complex multiplication 
by $\ZZ[\sqrt{-d}]$ with $d=1,3$, C.\ Schoen \cite{Schoen} proved that 
the generic Weil abelian variety satisfies the Hodge Conjecture, by
constructing the algebraic cycles 
representing the Weil classes (the method of proof is algebro-geometric
and not extendable to other values of $d$). 
See \cite{Moonen-Zarhin} for an account on the state of the
Hodge Conjecture for abelian four-folds.

One possible route to construct algebraic cycles for a Weil abelian variety
is to construct stable bundles with given Chern classes. The $\Spin$-rotations
are a new method to construct stable bundles on complex tori, and have the nice
feature of being a highly non-algebro-geometric proceduce. 
We show an example where $\Spin$-rotation is performed starting with 
an abelian variety of Weil type which is a product of two abelian surfaces.
The resulting $\Spin$-rotated torus is another Weil abelian variety which
turns out to be non-decomposable (see Section \ref{sec:rotating-Weil}), very different from 
the initial one from the algebro-geometric and arithmetic points of view. 
This is a deep indication that the $\Spin$-rotation have a very non-trivial
effect.

\smallskip

On the negative side, C.\ Voisin \cite{Voisin} gave examples of K\"ahler
$4$-tori which are not algebraic for which the Hodge Conjecture does not
hold. These tori are of Weil type, and the non-existence of complex cycles 
is proved via the non-existence of (stable) holomorphic structures on 
vector bundles, which in turn hinges on the Bogomolov inequality.
The $\Spin$-rotations give an explanation to the phenomenon in \cite{Voisin}.
For a bundle to be stable, it has to be positive enough, so the 
examples of \cite{Voisin} cannot be $\Spin$-rotated to abelian varieties.

Actually, we get 
non-trivial restrictions for the existence of stable bundles
with given Chern classes, producing a Bogomolov type inequality:

\noindent \textbf{Theorem.}
\emph{Let $(X,\omega)$ be an abelian four-fold. Let $\beta_0\in H^{2,2}_{prim}(X)$ be a rational
 primitive $(2,2)$-class, and define 
 $$
  k_m(\beta_0)=\frac14\, \max\{ \frac{\beta_0\cup c\cup \bar{c}}{\vol(X)} \, | \, c\in H^{2,0}(X), |c|_\omega = 1 \}.
  $$
 Then, for any $\omega$-polystable bundle $E$, such that the class $\beta(E)= c_2(E)-\frac{r-1}{2r}c_1(E)^2$
 has component in $H^{2,2}_{prim}(X)$ equal to $\beta_0$, we have
   $$
    \beta(E) \cup [\omega]^2 \geq k_m(\beta_0)\,  [\omega]^4\, .
    $$}

%

\smallskip

\noindent \textbf{Acknowledgements:} I am indebted to many people for
very useful conversations, suggestions and encouragement. In chronological order,
I have to mention Gang Tian, Ludmil Katzarkov, Simon Donaldson, Richard Thomas, Claire Voisin,
Ignasi Mundet, Francisco Presas, Javier Fern\'andez de Bobadilla,
Javier Mart\'{\i}nez, Luis Sol\'a, Matei Toma, Philippe Eyssidieux, Adrian Langer and Misha Verbitsky. 
Special thanks to Dominic Joyce for providing me with a copy of \cite{Lewis}. 
Also I am grateful to the referee for useful comments which helped to improve the exposition of the paper.

\section{$\Spin(7)$ geometry} \label{sec:spin7}

\subsection{The group $\Spin(7)$}\label{subsec:group-spin7}
Consider $\RR^8$ with coordinates $(x_1,\ldots, x_8)$, and the
following $4$-form:
  \begin{equation}\label{eqn:1}
  \begin{aligned}
   \Omega_0 = & \ dx_{1234}+ dx_{1256}+ dx_{1278} +dx_{1357} -dx_{1368}
   -dx_{1458}-dx_{1467} \\ &  -dx_{2358} -dx_{2367} -dx_{2457} +
   dx_{2468} + dx_{3456} +dx_{3478} +dx_{5678}\, ,
   \end{aligned}
  \end{equation}
where we abbreviate $dx_{ij\ldots k}= dx_i\wedge dx_j \wedge\ldots
\wedge dx_k$.
The group $\Spin(7)$ is the subgroup of $\GL_+(8,\RR)$ which
leaves invariant $\Omega_0$. 
An $\Spin(7)$-structure on an oriented vector space of
dimension $8$ is the choice of a $4$-form $\Omega$ which can be
written as (\ref{eqn:1}) in a suitable oriented frame. 

Given $\Omega$, there is a well-defined scalar product $g$. This
holds because of the inclusion 
$\Spin(7)\subset \SO(8)$. The Hodge star
operator $*$ associated to $g$ makes $\Omega$ self-dual,
$*\, \Omega= \Omega$. The standard volume form is $\vol=dx_{12\ldots 8}$.
Then $\Omega\wedge \Omega=14 \, \vol$.

We shall use some representation theory of $\Spin(7)$.
The standard representation is $V=\RR^8$ with the action given by
$\Spin(7) \inc \GL(8,\RR)$. There is a $7$-dimensional spin
representation $S=\RR^7$ given by the double cover $\Spin(7) \to
\SO(7)\subset \GL(7,\RR)$. Let $\bigwedge^i =\bigwedge^i V$,
$1\leq i\leq 8$. Then we have the following decomposition of the
representations $\bigwedge^i$ into irreducible factors:
 $$
 \begin{aligned}
 \bigwedge\nolimits^1 = & \, \bigwedge\nolimits^1_8 \\
 \bigwedge\nolimits^2 = & \, \bigwedge\nolimits^2_7 \oplus \bigwedge\nolimits^2_{21}\\
 \bigwedge\nolimits^3 = & \, \bigwedge\nolimits^3_8 \oplus \bigwedge\nolimits^3_{48} \\
 \bigwedge\nolimits^4 = & \, \bigwedge\nolimits^4_+ \oplus \bigwedge\nolimits^4_-,
  \qquad  \text{where } 
 \bigwedge\nolimits^4_+=\bigwedge\nolimits^4_1\oplus \bigwedge\nolimits^4_7\oplus
 \bigwedge\nolimits^4_{27},
  \quad  \bigwedge\nolimits^4_-=\bigwedge\nolimits^4_{35} \, ,
 \end{aligned}
 $$
where we write $\bigwedge^i_j$ for an irreducible
sub-representation of $\bigwedge^i$ of dimension $j$.

These are characterized as follows. 
$\bigwedge^2_7$ is isomorphic to the spin representation $S$
and consists of those $\alpha \in \bigwedge^2$ such that 
$*(\Omega\wedge \alpha)=3\alpha$ (equivalently, 
$\alpha\in \bigwedge^2_7$ if and only if
$\Omega\wedge\alpha \wedge \alpha =3|\alpha|^2$). The summand
$\bigwedge^2_{21}$ is characterised by the equation
$*(\Omega\wedge \alpha)=-  \alpha$.

The summand 
$\bigwedge^3_8$ consists of elements 
of the form $*(\Omega \wedge \phi)$, $\phi\in \bigwedge^1$. And 
$\bigwedge^3_{48}$ is its orthogonal complement, i.e.\ the kernel
of $\wedge \Omega: \bigwedge^3 \to \bigwedge^7$.

The Hodge operator $*$ acts as $\pm 1$ on $\bigwedge^4_\pm$. The summand
$\bigwedge^4_1$ is generated by $\Omega$. 
The summand $\bigwedge^4_7$ is isomorphic to $S$, and it is the image
under the chain of maps $S
\subset \bigwedge^2 V \subset V\otimes V \cong
\bigwedge^1_8\otimes \bigwedge^3_8 \stackrel{\wedge}{\too}
\bigwedge^4$. The piece $\bigwedge^4_{27} \cong \Sym_0^2 \, S$, the 
traceless part of the symmetric product, and it appears as the image under
$\Sym_0^2 \, S \subset S\otimes S \cong \bigwedge^2_7\otimes \bigwedge^2_7
\stackrel{\wedge}{\too} \bigwedge^4$. Finally,
$\bigwedge^4_{35}\cong \bigwedge^3 S$ under $\bigwedge^3 S\subset
S \otimes \bigwedge^2 S \cong \bigwedge^2_7\otimes \bigwedge^2_{21}
\stackrel{\wedge}{\too} \bigwedge^4$.

By the discussion above, 
the wedge product gives an isomorphism 
$\Sym^2 \bigwedge^2_7 \cong \bigwedge_{27}^4\oplus \bigwedge_1^4$.
Also the wedge product goes as $\bigwedge^2_7 \bigotimes
\bigwedge^2_{21} \too \bigwedge^4_7 \oplus \bigwedge^4_{35}$, surjectively.

\medskip

Let $M$ be a compact smooth oriented manifold of dimension $8$. A
$\Spin(7)$-structure on $M$ is the choice of a $4$-form $\Omega\in
\Omega^4(M)$ such that:
 \begin{itemize}
\item $\Omega_p$ is a $\Spin(7)$-structure on
$T_pM$ for each $p\in M$. The $\Spin(7)$-structure induces a
Riemannian metric $g$.
\item $\nabla \Omega=0$,
where $\nabla$ is the Levi-Civita connection associated to $g$.
This is equivalent to $\Omega$ being closed and co-closed \cite{Salamon},
$d\Omega=d*\Omega=0$. But as $\Omega$ is self-dual, it is
equivalent to $d\Omega=0$. 
 \end{itemize}
Note that $[\Omega] \in H^4(M)$ is non-trivial. 

If $\Omega$ is a $\Spin(7)$-structure, we say that $(M,\Omega)$ is a $\Spin(7)$-manifold.
Note that this is also equivalent to having a Riemannian metric $g$
on $M$ such that its (restricted) holonomy group satisfies $\Hol_g\subset
\Spin(7)$: choose $\Omega_{p_0}$ invariant by $\Spin(7)$ at some
$p_0\in M$, and parallel transport it by the connection to get a
global $\Omega$. However, note that we shall understand that a
$\Spin(7)$-manifold is a Riemannian manifold with holonomy
contained in $\Spin(7)$ \emph{together with} a chosen $4$-form as above.

For a $\Spin(7)$-manifold, the Laplacian on forms leaves
invariant the bundles $\bigwedge^i_j (TM)$. So it induces a
decomposition on harmonic forms as $\cH^i(M)= \bigoplus
\cH^i_j(M)$, accordingly. However, note that $\cH^i_j(M)$ are not
of dimension $j$, and they are not $\Spin(7)$-representations
 (cf.\ \cite{Joyce}).

\subsection{Relationship between $\SU(4)$ and $\Spin(7)$} \label{subsec:group-su4}

Let $V$ be a complex vector space of dimension $4$. This can be
understood as a real vector space of dimension $8$, $V= \RR^8$
together with an endomorphism $J:V\to V$, such that $J^2=-\id$.
This corresponds to the inclusion $\GL(4,\CC)\subset
\GL_+(8,\RR)$. An $\SU(4)$-structure is the same as the choice of:
 \begin{itemize}
 \item an hermitian metric $h$ on $V$. Correspondingly, $g=\Re (h)$
 is a scalar product compatible with $J$ (i.e.,\ $g(Jx,Jy)=g(x,y)$),
 and $\omega(x,y)=g(x, J y)$ is a $(1,1)$-form, also compatible
 with $J$. This corresponds to $\U(4)\subset \SO(8)$.
 \item a non-zero $(4,0)$-form $\theta$. This produces a
 trivialization of $\bigwedge^{4,0} V$. 
 \end{itemize}

There is a complex frame $(z_1,\ldots, z_4)=(x_1 + i x_2, \ldots, x_7+ix_8)$ such that
 $$
 \begin{aligned}
 \omega =& \,  \frac{i}2 ( dz_1 \wedge d\bar{z}_1 + \ldots +dz_4 \wedge
 d\bar{z}_4) = dx_{12}+ dx_{34}+ dx_{56}+dx_{78} \, ,\\
 \theta =& \, dz_1\wedge \ldots \wedge dz_4 \,.
 \end{aligned}
 $$
The group $\U(4)$ is the subgroup of $\GL_+(8,\RR)$ 
which leaves $\omega, J$ fixed (alternatively, $\omega, g$ fixed). The group
$\SU(4)$ leaves $\omega, J$ and $\theta$ fixed. Note that
$\U(4)\subset \SO(8)$. 

There is an inclusion $\SU(4)\subset \Spin(7)$, so an $\SU(4)$-structure induces a
$\Spin(7)$-structure. This holds by choosing the following real
$4$-form:
  \begin{equation}\label{eqn:Omega}
  \Omega= \frac12 \omega\wedge\omega + \Re (\theta).
  \end{equation}
It is easy to see that if we write $z_1=x_1+ix_2, \ldots,
z_4=x_7+ix_8$, then (\ref{eqn:Omega}) equals (\ref{eqn:1}).

Note that $\theta\wedge \bar{\theta}=16 \, \vol$,
$\Re(\theta) \wedge\Re(\theta) = \frac12 \theta \wedge \bar\theta = 8\, \vol$
and $\omega^4 = 24 \, \vol$. In particular, 
as $*\theta=\bar\theta$, we have that $|\theta|=4$, 
$|\Re(\theta)|= 2 \sqrt{2}$ and $|\omega|=2$.

For $\a\in \bigwedge^2_7$, we have $\Omega \wedge\a \wedge \a= \la 
* (\Omega \wedge\a) ,\a \ra =\la 3 \a, \a \ra = 3 |\a|^2$. As
 $$
 \Omega \wedge\omega \wedge \omega=\frac12 \omega^4= 12 \, \vol = 3 | \omega|^2,
 $$
we have that $\omega \in \bigwedge^2_7$.
Fixing a $\Spin(7)$-structure, the compatible $\SU(4)$-structures
(those inducing the given $\Spin(7)$-structure) are parametrized
by the homogeneous space
  $$
  P=\Spin(7)/\SU(4)\, .
  $$
Here the group $\Spin(7)$ acts on the subgroups $G < \Spin(7)$ which
are isomorphic to $\SU(4)$, by conjugation.

\begin{lemma} \label{lem:1}
  $P$ is diffeomorphic to $S(\bigwedge^2_7)$, which is a 
  $6$-sphere.
\end{lemma}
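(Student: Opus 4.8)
The plan is to identify $P=\Spin(7)/\SU(4)$ with the unit sphere in the $7$-dimensional real representation $\bigwedge^2_7\cong S$, and then observe that a $7$-dimensional sphere-bundle statement cannot be right --- rather, $S(\bigwedge^2_7)$ is the unit sphere in a $7$-dimensional vector space, hence a $6$-sphere, and $\dim P = \dim\Spin(7)-\dim\SU(4) = 21-15 = 6$, so the dimensions match. First I would recall from the representation theory above that $\omega\in\bigwedge^2_7$ with $|\omega|_g$ a fixed positive constant (the excerpt computes $|\omega|=2$), so that the K\"ahler form of any compatible $\SU(4)$-structure is a unit vector in $\bigwedge^2_7$ after rescaling. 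This gives a natural map $P\to S(\bigwedge^2_7)$ sending a compatible structure (equivalently a conjugate subgroup $gSU(4)g^{-1}$, equivalently the pair $(\omega',\theta')$ obtained by acting by $g\in\Spin(7)$) to $\omega'/|\omega'|$.

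Next I would check this map is well-defined and $\Spin(7)$-equivariant: $\Spin(7)$ acts on $\bigwedge^2_7$ (it is a subrepresentation of $\bigwedge^2 V$ preserved by $\Spin(7)$ since it is characterised by the $\Spin(7)$-invariant equation $*(\Omega\wedge\alpha)=3\alpha$), and it acts transitively on the unit sphere $S(\bigwedge^2_7)\cong S^6$ because $\bigwedge^2_7\cong S$ is the spin representation and $\Spin(7)\to\SO(7)$ is the (surjective) double cover, so $\Spin(7)$ acts on $S^6$ through $\SO(7)$, which is transitive on $S^6$. Therefore the orbit map at $\omega/|\omega|$ is surjective onto $S^6$. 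It remains to identify the stabiliser of $\omega/|\omega|$ in $\Spin(7)$ with $\SU(4)$: the stabiliser of $\omega$ in $\SO(8)$ is $\U(4)$, so the stabiliser in $\Spin(7)$ is $\Spin(7)\cap\U(4)$, and one checks using $\Omega=\tfrac12\omega\wedge\omega+\Re(\theta)$ that an element of $\U(4)$ fixing $\omega$ also fixes $\tfrac12\omega^2$, hence fixes $\Omega$ iff it fixes $\Re(\theta)$, which forces it into $\SU(4)$ (the $\U(1)$ of $\U(4)/\SU(4)$ rotates $\theta$ by a phase and so moves $\Re(\theta)$). Thus $\mathrm{Stab}_{\Spin(7)}(\omega/|\omega|)=\SU(4)$, and the orbit-stabiliser theorem gives a diffeomorphism $P=\Spin(7)/\SU(4)\isom S(\bigwedge^2_7)\cong S^6$.

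The main obstacle I expect is the stabiliser computation: verifying carefully that $\Spin(7)\cap\U(4)=\SU(4)$ inside $\SO(8)$, i.e.\ that an element of $\U(4)$ preserves the specific $4$-form $\Omega_0$ of (\ref{eqn:1}) if and only if its determinant is $1$. One clean way is to use that $\U(4)$ acts on the line $\bigwedge^{4,0}V$ through the determinant character, so on $\Re(\theta)$ it acts by $\Re$ of a phase; since $\U(4)$ already fixes $\tfrac12\omega\wedge\omega$, preserving $\Omega$ is equivalent to preserving $\Re(\theta)$, which (as $\theta$ and $\bar\theta$ are independent and only $\theta$ lives in $\bigwedge^{4,0}$) forces the determinant to be $1$. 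A dimension count ($15 = \dim\SU(4) \le \dim(\Spin(7)\cap\U(4)) < \dim\U(4) = 16$) then pins it down without further case analysis, and this also re-confirms $\dim P = 6$, consistent with $S^6$.
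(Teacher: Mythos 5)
Your argument is correct, and it closes the proof by a slightly different route than the paper. Both proofs hinge on the same map $\Phi:P\to S(\bigwedge^2_7)$, $(\omega,\theta)\mapsto\omega$ (up to your normalization), its $\Spin(7)$-equivariance, and the same key computation: an element of $\Spin(7)$ fixing $\omega$ fixes $\frac12\omega^2$, hence $\Re(\theta)=\Omega-\frac12\omega^2$, hence $\theta$, and so lies in $\SU(4)$ --- this is exactly the paper's injectivity step, phrased by you as the stabiliser identification $\mathrm{Stab}_{\Spin(7)}(\omega)=\SU(4)$ (your detour through $\mathrm{Stab}_{\SO(8)}(\omega)=\U(4)$ and the determinant character on $\bigwedge^{4,0}$ is a fine variant of the paper's more direct ``$J$ is fixed, so the $(4,0)$-part of $\Re(\theta)$ is fixed''). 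Where you diverge is in how the diffeomorphism is concluded: you prove surjectivity directly, using that $\Spin(7)$ acts on $\bigwedge^2_7\cong S=\RR^7$ through the double cover onto $\SO(7)$ and hence transitively on the sphere, and then invoke the orbit--stabiliser diffeomorphism for compact group actions; the paper instead avoids any transitivity statement, deducing from injectivity and the dimension count $\dim P=21-15=6$ that $\Phi$ is a local diffeomorphism, hence a covering of $S^6$, hence a diffeomorphism by simple connectivity. Your route buys an explicit, representation-theoretic proof of surjectivity (and makes the homogeneous-space structure of $S^6$ transparent); the paper's route is shorter on that side but leans on the topological covering argument. Either way the lemma is established.
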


\begin{proof}
  The elements of $P$ can be understood as pairs $(\omega,\theta)$ satisfying
  (\ref{eqn:Omega}), where $\Omega$ is fixed. Let $P'=S(\bigwedge^2_7)$, the sphere of
  radius $2$. We
  consider 
  the map $\Phi:P\to P'=S(\bigwedge^2_7)$, $\Phi(\omega,\theta)= \omega$.
  This is well-defined, since $\omega \in\bigwedge^2_7$ and $|\omega|= 2$.
  The map is clearly $\Spin(7)$-equivariant, and if $\phi\in \Spin(7)$
  leaves $\omega$ fixed, then it fixes $\Re(\theta)$ as well. Therefore it fixes
  $\theta$ which is the $(4,0)$-component of $\Re(\theta)$ ($J$ being fixed
  as well). So $\Phi$ is injective. 

  The dimension of $P$ is $\dim P=\dim \Spin(7)-\dim \SU(4)=21-15=6$,
  so $P$ and $P'$ are both smooth $6$-dimensional manifolds. Thus
  $\Phi$
  is regular, hence a local diffeomorphism, and so a covering. This implies that it is
  a diffeomorphism by simply-connectivity of the sphere.
\end{proof}

Lemma \ref{lem:1} says that 
given $\omega\in S(\bigwedge^2_7)$, with $|\omega|=2$, then
we have a well-defined $\SU(4)$-structure, where
$\Re(\theta)=\Omega-\frac12 \omega^2$, the complex structure $J$ is defined by
$g(x,y)=\omega(Jx,y)$, and 
$\Im(\theta)(u_1,u_2,u_3,u_4)=\Re(\theta)(Ju_1,u_2,u_3,u_4)$.

\medskip

The irreducible real representations of $\SU(4)$ are as follows.
Let $V$ be the standard representation of $\SU(4)$, that is
$V= \RR^8$ with the action given by $\SU(4) \subset \GL(4,\CC)
\inc \GL(8,\RR)$. Let $\bigwedge^k_\CC =(\bigwedge^k V)\ox \CC$,
$1\leq k\leq 8$. The action of $J$ gives a decomposition
  $$
  \bigwedge\nolimits^k_\CC=\bigoplus_{i+j=k \atop
  0\leq i,j\leq 4} \bigwedge\nolimits^{i,j} V\, .
  $$
We have the isomorphism 
$\bigwedge^{i,j} V\cong \overline{\bigwedge^{j,i} V}$. For $i < j$, we denote
  $$
  \bigtriangleup^{i,j}=\Re(\bigwedge\nolimits^{i,j} \oplus
  \bigwedge\nolimits^{j,i}),
  $$
which is a real representation, whose complexification is
$\bigwedge\nolimits^{i,j} \oplus \bigwedge\nolimits^{j,i}$. It is
irreducible and of (real) dimension $2\binom{4}{i}\binom{4}{j}$.
Its elements are of the form $\frac12(\alpha+\bar\alpha)$,
$\alpha \in \bigwedge\nolimits^{i,j}$. For given $i$, we
denote
  $$
  \bigtriangleup^{i,i}=\Re(\bigwedge\nolimits^{i,i}),
  $$
which is irreducible of (real) dimension $\binom{4}{i}^2$.

The choice of $\omega \in
\bigwedge^{1,1}$ gives a further decomposition. For $i+j\leq 4$,
  $$
  \bigwedge\nolimits^{i,j} =  \bigoplus_{r=m}^M
  \bigwedge\nolimits^{i-r,j-r}_{prim}\cdot \, \omega^r\, ,
  $$
where $m=\max\{0,i+j-4\}$, $M =\min\{i,j \}$, and
the primitive components are defined by
  $$
   \bigwedge\nolimits^{a,b}_{prim}
   =\ker(\omega^{5-(a+b)}:\bigwedge\nolimits^{a,b} \to
   \bigwedge\nolimits^{5-b,5-a}), 
  $$
for $a+b\leq 4$.
As $\omega$ is a real form, we have also a decomposition of the
real representations
  $$
  \bigtriangleup^{i,j} =  \bigoplus_{r=m}^M
  \bigtriangleup^{i-r,j-r}_{prim}\cdot \omega^r\, ,
  $$
where $\bigtriangleup^{a,b}_{prim}$ is defined in the obvious manner.

\smallskip

The element $\theta$ gives 
an extra decomposition. There is a complex linear map \cite{Donaldson-Thomas}
 $$
 L: \bigwedge\nolimits^{2,0} \too \bigwedge\nolimits^{0,2}\, ,
 $$
determined uniquely by
 $$
 \alpha \wedge \overline{L(\alpha)}=  \frac14 |\alpha|^2 \, \theta.
 $$
As $|\theta|=4$, 
the map $L$ is isometric.
Extend $L$ to $L:\bigwedge^{0,2}\to \bigwedge^{2,0}$, via $L(\bar\a)=\overline{L(\a)}$.
So $\alpha \wedge \overline{L(\alpha)}=  \frac14 |\alpha|^2 \bar\theta$, for $\a \in \bigwedge^{0,2}$.
Thus $L$ gives an endomorphism of $\bigtriangleup^{2,0}=\Re(\bigwedge^{2,0} \oplus
\bigwedge^{0,2})$.
It is easy to see that $L^2=\id$, so that $L$
gives a decomposition of $\bigtriangleup^{2,0}$ into $(\pm 1)$-eigenspaces. Write
 \begin{equation}\label{eqn:A+}
 \bigtriangleup^{2,0}=A_+\oplus A_- \, .
 \end{equation}
Both $A_+,A_-$ are real representations of dimension $6$.

Finally, $\bigtriangleup^{4,0}$ is of dimension $2$, and it decomposes
as $\la \Re (\theta)\ra \oplus \la \Im (\theta)\ra$.
To sum up, we have the following decomposition of the representations
$\bigwedge^i$ into irreducible summands:
 $$
 \begin{aligned}
 \bigwedge\nolimits^1 = & \, \bigtriangleup^{1,0}, \\
 \bigwedge\nolimits^2 = & \,  A_+ \oplus A_- \oplus  \bigtriangleup^{1,1}_{prim}\oplus\la \omega\ra, \\
 \bigwedge\nolimits^3 = & \, \bigtriangleup^{3,0} \oplus \bigtriangleup^{2,1}_{prim} \oplus \bigtriangleup^{1,0}\omega, \\
 \bigwedge\nolimits^4 = & \, \la \Re (\theta)\ra \oplus \la \Im (\theta)\ra \oplus
          \bigtriangleup^{3,1}_{prim}\oplus \bigtriangleup^{2,2}_{prim} \oplus A_+\omega \oplus A_-\omega
          \oplus\bigtriangleup^{1,1}_{prim} \omega \oplus \la \omega^2\ra .
 \end{aligned}
 $$

The relationship of the irreducible
representations of $\SU(4)$ and $\Spin(7)$ is given by the following result.

\begin{proposition} \label{prop:lem:2}
Let $V$ be an $8$-dimensional vector space with an $\SU(4)$-structure, and
consider the induced $\Spin(7)$-structure. Then the irreducible
$\Spin(7)$-representations decompose into $\SU(4)$-representations as follows:
 $$
 \begin{aligned}
 &\bigwedge\nolimits^2_7 =  \la \omega \ra \oplus A_+\, ,\\
 &\bigwedge\nolimits^2_{21} =  \bigtriangleup^{1,1}_{prim} \oplus A_-\, ,\\
 &\bigwedge\nolimits^3_8 =  \bigtriangleup^{3,0}\oplus \bigtriangleup^{1,0}\omega\,,\\
 &\bigwedge\nolimits^3_{48} =  \bigtriangleup^{2,1}_{prim} \, ,\\
 &\bigwedge\nolimits^4_1 =  \la \Omega\ra = \la \frac12\omega^2 +\Re(\theta)\ra \,, \\
 &\bigwedge\nolimits^4_7 =  A_-\omega \oplus \la \Im(\theta) \ra \,,\\
 &\bigwedge\nolimits^4_{27} =  A_+\omega \oplus \bigtriangleup^{2,2}_{prim}\oplus
     \la\omega^2 -\frac32 \Re(\theta)\ra \,,\\
 &\bigwedge\nolimits^4_{35}=  \bigtriangleup^{1,3}_{prim} \oplus \bigtriangleup^{1,1}_{prim}\omega\,.
 \end{aligned}
 $$
\end{proposition}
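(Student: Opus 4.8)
The plan is to match both sides summand by summand, using dimension counts together with the defining equations of the $\Spin(7)$-pieces $\bigwedge^2_7$, $\bigwedge^2_{21}$, $\bigwedge^3_8$, $\bigwedge^3_{48}$, $\bigwedge^4_\bullet$ recalled above, and the $\SU(4)$-decompositions of $\bigwedge^i$. Since $\SU(4)\subset\Spin(7)$, each $\Spin(7)$-irreducible restricts to a (not necessarily irreducible) $\SU(4)$-representation, and it suffices to identify which $\SU(4)$-irreducibles occur. The $\SU(4)$-irreducibles $\la\omega\ra$, $A_+$, $A_-$, $\bigtriangleup^{1,1}_{prim}$, $\bigtriangleup^{3,0}$, $\bigtriangleup^{1,0}\omega$, $\bigtriangleup^{2,1}_{prim}$, $\bigtriangleup^{3,1}_{prim}$, $\bigtriangleup^{1,1}_{prim}\omega$, $\la\Omega\ra$, etc., all have distinct dimensions (resp.\ $1,6,6,20,8,8,\ldots$) except for the two $6$-dimensional pieces $A_+$ and $A_-$ and the various $8$-dimensional pieces, so most matchings are forced once the dimension on the $\Spin(7)$ side is known.

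Concretely I would proceed as follows. \emph{Degree $2$:} We already know from the excerpt that $\omega\in\bigwedge^2_7$. Next, show $A_+\subset\bigwedge^2_7$ by verifying $*(\Omega\wedge\alpha)=3\alpha$ for $\alpha\in A_+$: writing $\alpha=\tfrac12(\gamma+\bar\gamma)$ with $\gamma\in\bigwedge^{2,0}$ and $L(\gamma)=\gamma$, compute $\Omega\wedge\alpha\wedge\alpha$ using $\Omega=\tfrac12\omega^2+\Re(\theta)$ and the relation $\gamma\wedge\overline{L(\gamma)}=\tfrac14|\gamma|^2\theta$; the $\tfrac12\omega^2$-term kills the $(2,0)\wedge(2,0)$ and $(0,2)\wedge(0,2)$ parts (wrong type) and the $\Re(\theta)$-term pairs $(2,0)$ with $(0,2)$, giving exactly $3|\alpha|^2$ for $L(\gamma)=\gamma$ and $-|\alpha|^2$ for $L(\gamma)=-\gamma$, hence $A_+\subset\bigwedge^2_7$ and $A_-\subset\bigwedge^2_{21}$. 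Since $\dim\bigwedge^2_7=7=1+6$ and $\bigtriangleup^{1,1}_{prim}$ has dimension $20$, the orthogonal complement of $\la\omega\ra\oplus A_+$ inside $\bigwedge^2=A_+\oplus A_-\oplus\bigtriangleup^{1,1}_{prim}\oplus\la\omega\ra$ is $A_-\oplus\bigtriangleup^{1,1}_{prim}$, which must be $\bigwedge^2_{21}$ by dimension ($6+20=21$). \emph{Degree $3$:} From the excerpt, $\bigwedge^3_8=*(\Omega\wedge\bigwedge^1)$; computing $*(\Omega\wedge\phi)$ for $\phi\in\bigwedge^1=\bigtriangleup^{1,0}$ using $\Omega=\tfrac12\omega^2+\Re(\theta)$ shows the image lands in $\bigtriangleup^{3,0}\oplus\bigtriangleup^{1,0}\omega$ (the two $8$-dimensional pieces in $\bigwedge^3$), and since both have dimension $8$ and their sum has dimension $16=\dim\bigwedge^3_8$, equality follows; then $\bigwedge^3_{48}=\bigtriangleup^{2,1}_{prim}$ is the remaining piece by dimension. \emph{Degree $4$:} $\bigwedge^4_1=\la\Omega\ra=\la\tfrac12\omega^2+\Re(\theta)\ra$ by definition of $\Omega$. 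For $\bigwedge^4_7\cong S\cong\bigwedge^2_7$, use the stated realisation as the image of $\bigwedge^1_8\otimes\bigwedge^3_8\to\bigwedge^4$ restricted to the copy of $S$, or more simply: apply the wedge-with-$\omega$ map and the Hodge star to the degree-$\leq 3$ results. Since $\bigwedge^2_7=\la\omega\ra\oplus A_+$, wedging the $\bigwedge^2_{21}$-part $A_-$ with $\omega$ and projecting appropriately, together with the observation $\Im(\theta)\in\bigwedge^4_7$ (check $*\Im(\theta)=\Im(\theta)$ and that it is orthogonal to $\Omega$ and to $\bigwedge^4_{27},\bigwedge^4_{35}$), identifies $\bigwedge^4_7=A_-\omega\oplus\la\Im(\theta)\ra$ by dimension $6+1=7$. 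For $\bigwedge^4_{35}\cong\bigwedge^3 S$: use the stated realisation as the image of $\bigwedge^2_7\otimes\bigwedge^2_{21}\to\bigwedge^4$ inside the $35$-dimensional summand; concretely $A_+\otimes\bigtriangleup^{1,1}_{prim}$, $\la\omega\ra\otimes A_-$, etc., wedge into $\bigwedge^4$, and comparing with the $\Spin(7)$-statement that this map surjects onto $\bigwedge^4_7\oplus\bigwedge^4_{35}$, the part landing in $\bigwedge^4_{35}$ is $\bigtriangleup^{3,1}_{prim}\oplus\bigtriangleup^{1,1}_{prim}\omega$ (dimensions $20+15=35$; note $\bigtriangleup^{1,3}_{prim}=\bigtriangleup^{3,1}_{prim}$). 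Finally $\bigwedge^4_{27}$ is forced as everything left over: $A_+\omega\oplus\bigtriangleup^{2,2}_{prim}\oplus\la\omega^2-\tfrac32\Re(\theta)\ra$, of dimension $6+20+1=27$, and the specific generator $\omega^2-\tfrac32\Re(\theta)$ is pinned down as the unique (up to scale) element of $\la\omega^2\ra\oplus\la\Re(\theta)\ra$ orthogonal to $\Omega=\tfrac12\omega^2+\Re(\theta)$, using $|\omega^2|^2=\la\omega^2,\omega^2\ra$, $|\Re(\theta)|^2=8$ and $\la\omega^2,\Re(\theta)\ra=0$ (types), from which orthogonality to $\Omega$ gives the coefficient ratio $-\tfrac32$ after computing $|\omega^2|^2$ via $\omega^4=24\,\vol$.

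The routine but somewhat delicate computations are the explicit evaluations of $*(\Omega\wedge-)$ on $\bigtriangleup^{2,0}$ and on $\bigwedge^1$, where one must carefully track how the $\tfrac12\omega^2$ and $\Re(\theta)$ summands of $\Omega$ act by type; the map $L$ and the normalisation $|\theta|=4$ are exactly what make the eigenvalue $+3$ come out for $A_+$. The main obstacle, though, is \emph{distinguishing the two $8$-dimensional pieces} $\bigtriangleup^{3,0}$ and $\bigtriangleup^{1,0}\omega$ inside $\bigwedge^3_8$ versus $\bigwedge^3_{48}$: a pure dimension count only tells us $\bigwedge^3_8$ is the sum of the two $8$-dimensional irreducibles, so to be rigorous one should exhibit, for some $\phi\in\bigtriangleup^{1,0}$, that $*(\Omega\wedge\phi)$ genuinely has nonzero components in both $\bigtriangleup^{3,0}$ and $\bigtriangleup^{1,0}\omega$ (equivalently, that neither $8$-dimensional $\SU(4)$-summand lies in the kernel of $\wedge\Omega$); this follows from an explicit choice $\phi=dx_1$ and direct computation with (\ref{eqn:1}), but it is the one place where one cannot avoid a concrete coordinate check. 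All other identifications are then forced by irreducibility and the distinctness of dimensions, together with the Hodge-star eigenvalue separating $\bigwedge^4_\pm$.
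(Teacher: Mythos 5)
Your degree-2 and degree-4 arguments are in substance the paper's own: the eigenvalue computation $\Omega\wedge a\wedge a=(1\pm2)|a|^2\vol$ for $a\in A_\pm$ via the map $L$, dimension counting, pinning down $\omega^2-\frac32\Re(\theta)$ by orthogonality to $\Omega$, and using the $\pm1$ eigenvalues of $*$ on $\bigwedge^4_\pm$. (One small slip in your description of that computation: it is the $\frac12\omega^2$ term that pairs the $(2,0)$ part against the $(0,2)$ part, while $\Re(\theta)$ pairs $(2,0)$ against $(2,0)$ through $\bar\theta$ and $(0,2)$ against $(0,2)$ through $\theta$; your stated outcome $3|a|^2$ versus $-|a|^2$ is nonetheless correct.)

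The genuine failure is in your degree-3 step. You write that $\bigtriangleup^{3,0}$ and $\bigtriangleup^{1,0}\omega$ "have dimension $8$ and their sum has dimension $16=\dim\bigwedge^3_8$", but by the notational convention of the paper $\bigwedge^3_8$ has dimension $8$ (it is isomorphic to $\bigwedge^1_8=V$ via $\phi\mapsto *(\Omega\wedge\phi)$), so it cannot equal the $16$-dimensional space $\bigtriangleup^{3,0}\oplus\bigtriangleup^{1,0}\omega$. In fact your own (correct) observation — that for generic $\phi$ the form $*(\Omega\wedge\phi)$ has nonzero components in both $\bigtriangleup^{3,0}$ and $\bigtriangleup^{1,0}\omega$ — proves the opposite of what you conclude: since $\bigtriangleup^{3,0}\cong\bigtriangleup^{1,0}$ as real $\SU(4)$-modules (both are the real form of the standard representation, and these are the two copies of one isotypic component), $\bigwedge^3_8$ is a \emph{diagonally embedded} $8$-dimensional copy of $\bigtriangleup^{1,0}$ inside that $16$-dimensional isotypic component, and $\bigwedge^3_{48}$, being $48$-dimensional, must be $\bigtriangleup^{2,1}_{prim}$ (which has dimension $40$, not $48$) together with the complementary $8$-dimensional copy. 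So the third and fourth lines of the Proposition cannot hold as literal equalities of subspaces, and no dimension count can deliver them; to be fair, the paper's own proof dismisses these two lines with "by dimensionality reasons" and runs into exactly the same arithmetic ($8+8\neq8$, $40\neq48$), so this is a defect of the statement and of the printed proof as much as of your sketch — but your bookkeeping at this point is wrong, and the conclusion you draw from your proposed coordinate check is the reverse of what that check actually shows. The correct repair is to state $\bigwedge^3_8=\{*(\Omega\wedge\phi):\phi\in\bigwedge^1\}\cong\bigtriangleup^{1,0}$ sitting diagonally in $\bigtriangleup^{3,0}\oplus\bigtriangleup^{1,0}\omega$, with $\bigwedge^3_{48}$ its orthogonal complement $\bigtriangleup^{2,1}_{prim}$ plus the antidiagonal copy.
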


\begin{proof}
We already know that $\omega \in \bigwedge^2_7$. Let $a\in A_\pm$. 
Then $a= \frac12 (\a+\bar\a)$ and $L(a)=\pm a$. So
$L(\a)=\pm \bar\a$. Therefore
 \begin{equation*}
 \left\{ \begin{array}{l}
 \a\wedge \bar\a \wedge \omega^2= 2 |\a|^2 \vol \\
 \a\wedge\a \wedge \frac14 \bar\theta = \pm \a \wedge \overline{L(\a)} 
\wedge \frac14 \bar\theta = \pm |\a|^2 \, \frac14 \theta \wedge \frac14 \bar\theta 
=\pm |\a|^2 \vol
 \end{array} \right. 
 \end{equation*}
the first equality being always true for $(2,0)$-forms.
So, using that $a=\frac12 (\alpha+\bar\alpha)$, 
 \begin{equation*}
 \left\{ \begin{array}{l}
 a \wedge a \wedge \frac12 \omega^2 = \frac12 \alpha \wedge \bar\alpha 
\wedge \frac12 \omega^2 = \frac12 |\alpha|^2 \vol=
|a|^2 \vol \\
 a \wedge a \wedge \Re(\theta)= \frac18 (\a\wedge\a \wedge \bar\theta +
\bar\a\wedge\bar\a \wedge \theta)= \pm |\a|^2 \vol =
\pm 2 |a|^2 \vol 
 \end{array} \right. 
 \end{equation*}
We compute
 \begin{eqnarray*}
  a\wedge a \wedge\Omega &=& 
 a \wedge a \wedge \left( \frac12 \omega^2 + \Re(\theta) \right) \\
  &=& (|a|^2 \pm 2 |a|^2) \vol\, .
\end{eqnarray*}
So, for $a\in A_+$, $a\wedge a \wedge\Omega =3|a|^2 \vol$. Hence $A_+
\subset \bigwedge^2_7$. 

For $a \in A_-$, we have $a\wedge a \wedge\Omega =- |a|^2 \vol$,
proving that $A_-\subset \bigwedge^2_{21}$. The decomposition of $\bigwedge^2_{21}$
now follows by dimensionality reasons.

The decompositions in the third and fourth lines also follow by dimensionality
reasons.

The fifth line is by definition. Now the dimensions of
$A_-\omega$, $A_+\omega$, $\bigtriangleup^{2,2}_{prim}$, $\bigtriangleup^{1,3}_{prim}$
and $\bigtriangleup^{1,1}_{prim}\omega$ are $6,6, 20, 20$ and $15$, respectively.
Therefore the dimensions of the lines sixth to eighth should be 
$1+6, 1+20+6, 15+20$. This implies that $\bigtriangleup^{1,1}_{prim}\omega \subset 
\bigwedge^4_{35}$. Now recall that 
$\bigwedge^4_{35} =\bigwedge^4_-$. So to see that
$\bigtriangleup^{1,3}_{prim}\subset \bigwedge^4_{35}$, it is
enough to see that for $\alpha\in \bigtriangleup^{1,3}_{prim}$ we
have $\alpha\wedge \alpha =-|\alpha|^2\vol$. Taking as example
$\alpha=\Re(dz_1\wedge d\bar{z}_2\wedge d\bar{z}_3\wedge
d\bar{z}_4)$, we check this. This proves the eighth line.

For the seventh line, recall that image of $\Sym^2 (\bigwedge^2_7)$ 
(under the wedge product) is
$\bigwedge^4_{27}\oplus \bigwedge^4_{1}$. This image contains
$A_+ \omega$ and $\omega^2$. As it also contains $\Omega$, we have
$\Re(\theta)\in \bigwedge^4_{27}\oplus \bigwedge^4_{1}$. 
Since $\omega^2-\frac32 \Re(\theta) \perp
\frac12\omega^2 +\Re(\theta)$, it must be
$\omega^2-\frac32 \Re(\theta) \in \bigwedge_{27}^4$. 
Also $A_+\omega \subset \bigwedge_{27}^4$. The result follows.

For the sixth line, note that it must be $A_-\omega \subset \bigwedge^4_7$.
The extra term must be $\Im(\theta)$, since
$\Im(\theta)\perp \Re(\theta),\omega^2$.
\end{proof}

\begin{remark}\label{rem:aspa}
As we said in Subsection \ref{subsec:group-spin7},
there is an isomorphism $\Sym^2 \bigwedge^2_7 \cong \bigwedge_{27}^4\oplus \bigwedge_1^4$.
Under it, we see that  
\begin{equation}\label{eqn:aspa}
 \Sym^2 A_+ \cong \Delta^{2,2}_{prim}\oplus \la \Re(\theta) + 8 \omega^2 \ra.
\end{equation}
Also, the surjection 
$\bigwedge^2_7 \otimes \bigwedge_{21}^2 \too \bigwedge_7^4 \oplus \bigwedge_{35}^4$
 restrics to give a surjection
\begin{equation}\label{eqn:aspa2}
 A_+ \otimes A_- \surj \Delta^{1,1}_{prim}\omega \oplus \la \Im(\theta) \ra.
\end{equation}
More explicitly, take the following elements of $\bigwedge^{2,0}$
 $$
 \left\{ \begin{array}{l} 
c_1  = dz_{12}+dz_{34}\\
c_2 = i\,dz_{12}-i\, dz_{34}\\
c_3 = dz_{13}-dz_{24}\\
c_4 = i\,dz_{13}+i\,dz_{24}\\
c_5 = dz_{14}+dz_{23}\\
c_6 = i\,dz_{14}-i\,dz_{23}
\end{array}\right.
\qquad 
 \left\{ \begin{array}{l} 
c_1' = i\,dz_{12}+i\, dz_{34}\\
c_2' = -dz_{12}+dz_{34}\\
c_3' = i\,dz_{13}-i\,dz_{24}\\
c_4' = -dz_{13}-dz_{24}\\
c_5' = i\,dz_{14}+i\,dz_{23} \\
c_6' = -dz_{14}+dz_{23}
\end{array}\right.
 $$
Then $\gamma_j=\frac12(c_j+\bar c_j)$ are an orthogonal basis for $A_+$, and
$\gamma_j'=\frac12(c_j'+\bar c_j')$ are an orthogonal basis for $A_-$.
We have $c_j\wedge c_j=8 \frac\theta4$,  $c_j\wedge c_j'=8i \, \frac\theta4$, and
$c_i\wedge c_j=0$, $c_i\wedge c_j'=0$, for $i\neq j$. 
The generators of $\Sym^2 A_+$ are $\gamma_i\wedge \gamma_j$,
and the generators of the image of $A_+\otimes A_-$ are
$\gamma_i\wedge \gamma_j'$. From here 
(\ref{eqn:aspa}) and (\ref{eqn:aspa2}) follow easily.
\end{remark}

\begin{remark} \label{rem:Spin6-transitive}
The group $\SU(4)$ acts on $A_\pm \cong \RR^6$, compatibly with 
$\SU(4)\cong \Spin(6)\twoheadrightarrow \SO(6)$. In particular, the
action is transitive on the elements of fixed norm.
\end{remark}

\medskip

Now let $M$ be a compact Calabi-Yau $4$-fold. That is, $M$ is a complex
manifold of (complex) dimension $4$, endowed with a K\"ahler
metric $g$ given by a K\"ahler form $\omega\in \Omega^{1,1}(M)$,
and a holomorphic volume form $\theta \in \Omega^{4,0}(M)$. Both
$\omega$ and $\theta$ are parallel with respect to the Levi-Civita
connection $\nabla$. Therefore the holonomy $\Hol_g \subset
\SU(4)$.

Such $M$ gets an induced $\Spin(7)$-structure, given by the choice
$\Omega= \frac12 \omega^2 + \Re(\theta)$.

Conversely, let $M$ be a manifold with a $\Spin(7)$-structure. To
get an $\SU(4)$-structure, we need to choose a $2$-form $\omega$
which is a section of the sphere bundle $S(\bigwedge^2_7(TM))$.
This gives a complex structure $J$ defined point-wise by Lemma
\ref{lem:1}, making $M$ an almost-complex manifold. Then there is
a $(4,0)$-form $\theta$ defined by $\Re(\theta) =\Omega-\frac12
\omega^2$. If $\omega$ is closed and $J$ is integrable then $M$
is K\"ahler. In this case, $\theta$ is also closed, and we have 
the holonomy contained in $\SU(4)$, i.e., $M$ is a Calabi-Yau $4$-fold.

\section{$\Spin(7)$-connections and holomorphic bundles} \label{sec:connections}

\subsection{$\Spin(7)$-connections}

Let $M$ be an $8$-dimensional $\Spin(7)$-manifold. 
Let $E\to M$ be a complex bundle of rank
$r$, and let $c_i=c_i(E)$ denote its Chern classes. We put a
hermitian metric on $E$, so that $E$ is a $\U(r)$-bundle.

Let $\cA_E$ denote the space of $\U(r)$-connections on $E$ such
that $\tr F_A  \in \Omega^2(M)$ is harmonic, where for
$A\in\cA_E$, we denote the curvature $F_A\in \Omega^2(\End E)$.

Let $\su_E\subset \End(E)$ be the associated bundle of skew-hermitian
traceless endomorphisms, and denote
 $$
 F_A^o:= F_A - \frac1r (\tr F_A) \id \in \Omega^2(\su_E).
 $$
By Chern-Weil theory,
 $$
 \int_M \tr (F_A^o\wedge F_A^o) = 8\pi^2 \left(c_2 - \frac{r-1}{2r}c_1^2 \right)\, .
 $$
We set
 $$
 \beta=\beta(E):= \frac{1}{8\pi^2} [\tr(F_A^o\wedge F_A^o)]= c_2 - \frac{r-1}{2r}c_1^2 \in H^4(M).
 $$

The decomposition $\Omega^2=\Omega^2_7\oplus \Omega^2_{21}$ gives rise to
projections $\pi_7$ and $\pi_{21}$. 

\begin{definition} We say that $A$ is a \emph{$\Spin(7)$-instanton} if it satisfies the
$\Spin(7)$-instanton equation
  \begin{equation}\label{eqn:spin7}
   \pi_7 (F_A^o)=0\, .
  \end{equation}
\end{definition}
  
We have, for $A\in \cA_E$,
$$
 \int_M \tr(F_A^o\wedge F_A^o) \wedge \Omega = 8\pi^2 \beta \cup [\Omega] \, .
 $$
Also
  $$
 \int_M \tr(F_A^o\wedge F_A^o) \wedge \Omega = ||\pi_{21} F_A^o||^2- 3||\pi_7F_A^o||^2\, ,
 $$
Here we have used that $\tr( F_A^o \wedge F_A^o) = - \tr( (\overline{F^o_A})^t
\wedge F_A^o)$, taking the 
transpose conjugate on matrices, and using $\alpha\wedge\alpha\wedge
\Omega= 3|\alpha|^2 \vol$ on $\bigwedge^2_7$,
and $\alpha\wedge\alpha\wedge \Omega= -|\alpha|^2 \vol$ on $\bigwedge^2_{21}$.

We have the Yang-Mills functional 
 $$
  \cF(A)= ||F_A^o||^2=||\pi_{21} (F_A^o)||^2 + ||\pi_7 (F_A^o) ||^2\, .
 $$
The minimum of the functional $\cF$ is attained for
$\Spin(7)$-instantons. Such minimum is the topological invariant
$[8\pi^2(c_2- \frac{r-1}{2r} c_1^2)] \cup [\Omega]$. The Yang-Mills functional is
gauge-invariant, under the gauge group $\cG=\Aut(E)$. In  \cite{Reyes}, 
Reyes-Carri\'on studies the linearization of the $\Spin(7)$-equation,
which is the elliptic complex
 $$
 \Omega^0(\su_E)\to \Omega^1(\su_E)\to \Omega^2_7(\su_E)\, .
 $$
There should be a moduli space of $\Spin(7)$-instantons, but it
has not been constructed yet. In \cite{Tian} it is discussed how
the $\Spin(7)$-connections may blow-up for a sequence of $\Spin(7)$-instantons,
which is needed to compactify the
moduli space. However, the regularity of the moduli spaces
(transversality of the $\Spin(7)$-equation) is a difficult issue,
since the possible perturbations of metrics keeping the holonomy
in $\Spin(7)$ are too scarce.

In this paper, we shall use the equation (\ref{eqn:spin7}) as it
is, without changing the
$\Spin(7)$-structure but changing the underlying
$\SU(4)$-structure. This does not affect the property of (\ref{eqn:spin7}) having
solutions. We will call this procedure
\emph{$\Spin$-rotation}.

\subsection{Holomorphic structures}

Let $(M,\omega)$ be a K\"ahler manifold, that is, a complex manifold
endowed with a $\U(4)$-structure. Suppose that $E\to M$ is a complex
rank $r$ bundle with a hermitian metric, and let $A$ be a $\U(r)$-connection.
We say that the connection $A$ is Hermitian-Yang-Mills (w.r.t.\ $\omega$) if
  $$
  \left\{ \begin{array}{l} 
           F_A\in \Omega^{1,1}(\End E)\, , \\
\frac{i}{2\pi}\la F_A ,\omega \ra =\lambda \, \id\, ,
\end{array} \right.
  $$
where $\lambda$ is a constant. This constant is determined by
  $$
  \frac{i}{2\pi} \int \tr F_A \wedge \omega^3= [c_1(E) ]\cup[\omega]^3 = \lambda
  [\omega]^4\, .
  $$
For a $\omega$-HYM connection, we have 
$F_A^{0,2}=\bar\partial_A^2=0$, so $\bar\partial_A$ defines
a holomorphic structure on $E$.

\begin{proposition}\label{prop:HYM=stable}
  Let $(M,\omega)$ be a K\"ahler manifold.
  The following are equivalent:
  \begin{itemize}
  \item $E$ admits a Hermitian-Yang-Mills connection w.r.t.\ $\omega$.
  \item $E$ admits a holomorphic structure $\bar\partial_A$ and
  $(E,\bar\partial_A)$ is polystable w.r.t.\ $\omega$.
  \end{itemize} \hfill $\Box$
\end{proposition}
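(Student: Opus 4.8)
The strategy is to combine three ingredients already available in the excerpt: (i) Proposition \ref{prop:HYM=stable}, which identifies $\omega$-polystability with the existence of a Hermitian-Yang-Mills connection; (ii) the characterisation of $\Spin(7)$-instantons via the vanishing of $\pi_7(F_A^o)$ together with the decomposition $\bigwedge^2_7=\la\omega\ra\oplus A_+$ from Proposition \ref{prop:lem:2}; and (iii) the fact that $\Spin$-rotation moves among compatible $\SU(4)$-structures without changing the underlying $\Spin(7)$-form $\Omega$, hence preserves the $\Spin(7)$-instanton equation. First I would let $E$ be $\omega$-polystable with $\beta_{prim}(E)=\beta_0$, pass to its HYM connection $A$, and normalise to the traceless part $F_A^o$; by Proposition \ref{prop:HYM=stable} and the definition of HYM, $F_A^o$ is of type $(1,1)$ and primitive, so $F_A^o\in\Omega^{1,1}_{prim}(\su_E)$. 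Comparing this with the $\SU(4)$-refined decomposition $\bigwedge^2=A_+\oplus A_-\oplus\bigtriangleup^{1,1}_{prim}\oplus\la\omega\ra$, a primitive $(1,1)$-form has no component in $\la\omega\ra$ and, crucially, no component in $A_+$ (since $A_\pm\subset\bigtriangleup^{2,0}$). Because $\bigwedge^2_7=\la\omega\ra\oplus A_+$, this says precisely that $\pi_7(F_A^o)=0$: a $\omega$-HYM connection is automatically a $\Spin(7)$-instanton.

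Next I would exploit that the same connection $A$ remains a $\Spin(7)$-instanton after a $\Spin$-rotation. Fix the $\Spin(7)$-structure $\Omega$ on $X=\RR^8/\Lambda$ coming from $(\omega,\theta)$, and let $\omega'\in S(\bigwedge^2_7(TX))$ be any other compatible Kähler form (since $X$ is a torus, $\omega'$ can be taken translation-invariant, hence closed, and the induced almost-complex structure $J'$ is integrable, so $(X,\omega')$ is again a Kähler $4$-torus). The instanton equation for the new $\SU(4)$-structure reads $\pi'_7(F_A^o)=0$, where $\pi'_7$ projects onto $\la\omega'\ra\oplus A'_+$; this is unchanged because $\pi_7$ only depends on $\Omega$. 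One then wants to convert $\pi'_7(F_A^o)=0$ into a statement about the $(1,1)$-type of $F_A^o$ with respect to $J'$. Here I would use that $F_A^o\in\bigwedge^2_{21}$ (being a primitive $(1,1)$-form for $\omega$, already shown), and decompose $\bigwedge^2_{21}=\bigtriangleup^{1,1}_{prim}{}'\oplus A'_-$ in the primed $\SU(4)$-structure: the component of $F_A^o$ in $A'_-$ is, via the map $L'$, governed by the pairing against $\theta'$, while the $\bigtriangleup^{1,1}_{prim}{}'$ component is the primed-primitive $(1,1)$ part. The inequality will come from estimating the $A'_-$ component: for a $\Spin(7)$-instanton that does not become $J'$-holomorphic, the defect is measured precisely by the $(2,0)+(0,2)$-part of $F_A^o$ relative to $J'$, which is nonzero, and controlling it gives a lower bound on a curvature integral.

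The quantitative heart is the Chern–Weil / Chern–Simons identity already displayed in the excerpt, $\int_X\tr(F_A^o\wedge F_A^o)\wedge\Omega = \|\pi_{21}F_A^o\|^2 - 3\|\pi_7 F_A^o\|^2 = 8\pi^2\,\beta(E)\cup[\Omega]$, rewritten for the primed structure: since $\pi'_7(F_A^o)=0$, we get $8\pi^2\,\beta(E)\cup[\Omega']=\|F_A^o\|^2\ge 0$, but more sharply I would split $\Omega'=\tfrac12\omega'^2+\Re(\theta')$ and separate the contributions of $\tfrac12\omega'^2$ (giving $\beta(E)\cup[\omega']^2$ up to the universal factor) and of $\Re(\theta')$ (giving $\tfrac18\int\tr(F_A^o\wedge F_A^o)\wedge\theta' + \text{c.c.}$, which by Remark \ref{rem:aspa} and the definition of $L'$ equals a non-negative multiple of the squared norm of the $A'_+$-component of $F_A^o$ — and here, since $F_A^o\in\bigwedge^2_{21}$, its $A'_+$-component vanishes, so the $\Re(\theta')$ term pairs only against the $A'_-$-part). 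Optimising over $\omega'$ — equivalently over $c=$ the $(2,0)$-form determined by the rotation, running over $H^{2,0}(X)$ with $|c|_\omega=1$ — and collecting the normalisations $\omega^4=24\vol$, $\theta\wedge\bar\theta=16\vol$, $\Omega\wedge\Omega=14\vol$, one extracts the constant $k_m(\beta_0)=\tfrac14\max\{\beta_0\cup c\cup\bar c/\vol(X)\}$ and the inequality $\beta(E)\cup[\omega]^2\ge k_m(\beta_0)\,[\omega]^4$. The main obstacle I anticipate is \emph{bookkeeping the projections across the two $\SU(4)$-structures}: one must be careful that $F_A^o$, which is primitive $(1,1)$ and hence in $\bigwedge^2_{21}$ for the original structure, genuinely lands in $\bigwedge^2_{21}$ as a $\Spin(7)$-representation (it does, by Proposition \ref{prop:lem:2}), so that after rotation it decomposes purely into $\bigtriangleup^{1,1}_{prim}{}'\oplus A'_-$ with no $A'_+$ or $\omega'$ part — this is what forces the $\Re(\theta')$-pairing to be controlled and makes the optimisation over $c\in H^{2,0}$, rather than over all of $\bigwedge^2_7$, the correct one.
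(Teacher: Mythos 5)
Your proposal does not prove the statement at hand. Proposition \ref{prop:HYM=stable} is the Hitchin--Kobayashi correspondence (Donaldson--Uhlenbeck--Yau theorem): existence of a Hermitian-Yang-Mills connection on $E$ with respect to $\omega$ is equivalent to $E$ admitting an $\omega$-polystable holomorphic structure. The paper states this as a known result with no proof (the $\Box$ marks it as standard). Your text instead takes this very proposition as ``ingredient (i)'' and goes on to sketch the proof of a different result, namely the Bogomolov-type inequality of Theorem \ref{thm:bogomolov} (the theorem displayed in the introduction): you discuss $\Spin$-rotation, the decomposition $\bigwedge^2_7=\la\omega\ra\oplus A_+$, the Chern--Weil pairing against $\Omega'$, and the optimisation over $c\in H^{2,0}$ yielding $k_m(\beta_0)$. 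None of that bears on the equivalence you were asked to establish; as written, the argument is circular with respect to the statement, since it invokes the proposition it is supposed to prove.

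To actually address Proposition \ref{prop:HYM=stable} you would need two separate arguments of a quite different character. The easy direction is that an $\omega$-HYM connection $A$ satisfies $F_A^{0,2}=\bar\partial_A^2=0$, so $\bar\partial_A$ defines a holomorphic structure, and polystability then follows from the standard slope comparison: for a saturated subsheaf $\cF\subset\cE$, the curvature of the induced metric on $\cF$ together with the second fundamental form and the HYM condition forces $\mu(\cF)\leq\mu(\cE)$, with equality only for holomorphic splittings. The hard direction --- existence of an HYM metric on a polystable bundle --- is a deep analytic theorem requiring either Donaldson's heat-flow argument or the Uhlenbeck--Yau continuity method with their regularity theory for weakly holomorphic subsheaves; nothing in the paper's toolkit (representation theory of $\Spin(7)$, $\Spin$-rotations, Chern--Weil identities) substitutes for that analysis. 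The appropriate ``proof'' in the context of this paper is simply a citation of Donaldson and Uhlenbeck--Yau, which is what the paper's bare $\Box$ implicitly does.
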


Here let $(M,\omega)$ be a K\"ahler manifold, denote
$H=[\omega]\in H^2(M)$ the class of the K\"ahler form. We
define
  $$
  \deg_H (E)=\la c_1(E) \cup  [\omega]^3, [M] \ra \, .
  $$
We define the slope of a bundle $E$ as
  $$
  \mu(E) := \frac{\deg_H (E)}{\rk(E)}\, .
  $$
A holomorphic bundle $\cE=(E,\bar\partial_A)$ is stable if
$\mu(\cF)<\mu(\cE)$ for any proper holomorphic subsheaf
$\cF\subset \cE$. It is polystable if it is the direct sum of
stable bundles of the same slope.

\begin{definition}
We say that a connection $A$ is traceless Hermitian-Yang-Mills (w.r.t.\ $\omega$) if
$\tr F_A$ is harmonic and
  $$
  F_A^o\in \Omega^{1,1}_{prim}(\su_E)\, .
  $$
We say that the bundle $E\to M$ is traceless $\omega$-HYM if it admits a 
traceless $\omega$-HYM connection.
\end{definition}

\begin{remark}
 We can prove that for a K\"ahler manifold $(M,\omega)$, and $E\to M$ a $\U(r)$-bundle with
 a connection $A$, $A$ is $\omega$-HYM $\iff$
 $A$ is traceless $\omega$-HYM and $c_1(E)  \in H^{1,1}(M)$. However, we shall not need this.
\end{remark}

Note that for a traceless $\omega$-HYM connection, the first Chern class $c_1(E)\in H^2(M,\ZZ)$
may be arbitrary. There are not restrictions with respect to the Hodge decomposition.

\begin{lemma} \label{lem:9}
Let $(M,\omega)$ be a projective complex manifold, and $A$ a traceless $\omega$-HYM
connection on a bundle $E$ with $\beta=\beta(E)$. 
Then $\beta$ is an algebraic class (i.e.\ defined by a rational algebraic cycle).
\end{lemma}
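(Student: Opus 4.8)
The plan is to identify $\beta$, up to a rational factor, with a topological Chern class of a genuine \emph{holomorphic} vector bundle on $M$, and then invoke GAGA to upgrade ``holomorphic'' to ``algebraic''. Note that $E$ itself need not carry a holomorphic structure: the $(0,2)$-part of $\tr F_A$ may be non-zero, so $\bar\partial_A^2$ need not vanish. However, $\beta$ depends only on $\End E=E^\ast\otimes E$, and this bundle \emph{does} acquire a holomorphic structure, which is what I will exploit.

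First I would check that the connection on $\End E$ induced by $A$ is integrable. Its curvature is $\ad(F_A)=[F_A,-]$; writing $F_A=F_A^o+\tfrac1r(\tr F_A)\,\id$ and using that the scalar summand is central, this curvature equals $[F_A^o,-]$. By the traceless $\omega$-HYM hypothesis we have $F_A^o\in\Omega^{1,1}(\su_E)$, so the curvature of the induced connection on $\End E$ is of type $(1,1)$; in particular its $(0,2)$-component vanishes, the $(0,1)$-part of the induced connection squares to zero, and $\End E$ inherits a holomorphic structure.

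Since $M$ is a projective complex manifold, Serre's GAGA theorem identifies this holomorphic bundle with an algebraic vector bundle on $M$; hence its topological Chern classes $c_i(\End E)$ lie in the image of the cycle class map $CH^i(M)\to H^{2i}(M)$, i.e.\ they are represented by algebraic cycles. It remains to relate $\beta$ to these classes. From $\operatorname{ch}(E^\ast\otimes E)=\operatorname{ch}(E^\ast)\operatorname{ch}(E)$ one reads off $c_1(\End E)=0$ and $\operatorname{ch}_2(\End E)=(r-1)c_1(E)^2-2r\,c_2(E)$, so that $c_2(\End E)=2r\bigl(c_2(E)-\tfrac{r-1}{2r}c_1(E)^2\bigr)=2r\,\beta$ --- equivalently this is, up to a universal constant, the Chern--Weil integrand $\tr(F_A^o\wedge F_A^o)$ already appearing in the excerpt. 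Therefore $\beta=\tfrac1{2r}c_2(\End E)$ is defined by a rational algebraic cycle.

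The real content is the observation that, although $E$ may fail to be holomorphic, $\End E$ never does under the hypothesis, after which projectivity of $M$ --- through GAGA --- does all the remaining work. This is also where the argument is sharp: on a non-projective K\"ahler $M$ the same reasoning would only show $\beta\in H^{2,2}(M,\QQ)$, not that it is algebraic, in accordance with the link to the Hodge conjecture discussed in the introduction. The leftover verifications --- that the induced $\bar\partial$ on $\End E$ satisfies $\bar\partial^2=0$, and the Chern-class identity $c_2(\End E)=2r\beta$ --- are routine.
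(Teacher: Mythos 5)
Your proof is correct and follows essentially the same route as the paper: the paper also passes to the endomorphism-type bundle (it uses $\su_E$ rather than $\End E$, which differ only by a trivial trace summand), observes the induced curvature $[F_A,-]=[F_A^o,-]$ is of type $(1,1)$ so that the bundle is holomorphic, computes $c_2=2r\,\beta$, and concludes algebraicity via projectivity. Your write-up merely makes explicit the GAGA step and the Chern-class identity that the paper leaves implicit, and correctly notes that polystability is not needed for the conclusion.
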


\begin{proof}
Consider the bundle $\su_E\subset \End E$. This has a product connection
$\tilde A=A\otimes  \id - \id \otimes A^t$. This connection has zero trace, and it is
traceless $\omega$-HYM, since  
 $$
  F_{\tilde A}(\phi)=[F_A, \phi]\in \Omega^{1,1}_{prim}(\su_E),
  $$
for $\phi \in \Gamma(\su_E)$. So $F_{\tilde A}\in \Omega^{1,1}_{prim}(\su_E)$ (the
adjoint bundle of $\su_E$ is $\su_E$ again).
Hence $\tilde A$ is $\omega$-HYM. Therefore $\su_E$ is a holomorphic vector bundle, which is
moreover polystable. Also $c_1(\su_E)=0$ and $c_2(\su_E)= 2r c_2 - (r-1) c_1^2=2r\,\beta$.
So $\beta(\su_E)=2r\, \beta$. Hence $\beta$ is an algebraic class.
\end{proof}

\begin{proposition}[\cite{Mistretta}] \label{prop:Mistretta}
 Let $(M,\omega)$ be a projective complex manifold, $H=[\omega]$.
 Let $\beta_0\in H^{2,2}(M)$ be an algebraic class (a Hodge class
 representable by an algebraic cycle). Then there exists $N\in \ZZ$
 and a $\omega$-HYM bundle $E\to M$ with $\beta(E)=\beta_0 + N H^2$. \hfill $\Box$
\end{proposition}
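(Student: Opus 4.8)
The plan is to pass from $E$ to a Hermite--Einstein connection, regard it as a $\Spin(7)$-instanton for the \emph{fixed} $4$-form $\Omega$, let the $\SU(4)$-structure vary over $P=\Spin(7)/\SU(4)\cong S(\bigwedge^2_7)$ while keeping $\Omega$ and the metric fixed, and then extract the inequality by comparing two evaluations of $\beta(E)\cup[\omega']^2$ along this family. By Proposition~\ref{prop:HYM=stable} an $\omega$-polystable $E$ carries an $\omega$-Hermite--Einstein connection $A$, so $F_A^o\in\Omega^{1,1}_{prim}(\su_E)\subset\Omega^2_{21}(\su_E)$ and $A$ is a $\Spin(7)$-instanton; moreover $E$ is holomorphic, so $\beta(E)\in H^{2,2}(X)$. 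Writing the hard Lefschetz decomposition $\beta(E)=\beta_0+\gamma\,\omega+\lambda\,\omega^2$ with $\beta_0\in H^{2,2}_{prim}$, $\gamma\in H^{1,1}_{prim}$, $\lambda\in\RR$, and using $\beta_0\cup[\omega]^2=\gamma\,\omega\cup[\omega]^2=0$ and $[\omega]^4=24\,\vol(X)$, one gets $\beta(E)\cup[\omega]^2=\lambda[\omega]^4$, so it suffices to prove $\lambda\ge k_m(\beta_0)$.

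Next I would fix $\Omega$ and move $\omega$ to an arbitrary $\omega'\in S(\bigwedge^2_7)$, say $\omega'=\cos t\,\omega+\sin t\,\hat v$ with $\hat v\in A_+=A_+(\omega)$, $|\hat v|=2$; the connection $A$ stays a $\Spin(7)$-instanton, and relative to the new structure $(\omega',\theta')$ one splits $F_A^o=F^++F^-$ along $\Omega^2_{21}=\Omega^{1,1}_{prim,\omega'}\oplus\bigl(A_-(\omega')\otimes\su_E\bigr)$. Then $\beta(E)\cup[\omega']^2=\tfrac1{8\pi^2}\int_X\tr(F_A^o\wedge F_A^o)\wedge\omega'^2$ is computed in two ways:
\begin{itemize}
 \item[(a)] via $\Omega=\tfrac12\omega'^2+\Re(\theta')$, together with the pointwise identities $\tr(\phi\wedge\phi)\wedge\Omega=|\phi|^2\vol$ on $\Omega^2_{21}(\su_E)$ and $\tr(\phi\wedge\phi)\wedge\Re(\theta')=2|\phi|^2\vol$ on $A_-(\omega')\otimes\su_E$ (the sign ``$-2|a|^2$'' from the proof of Proposition~\ref{prop:lem:2}), and the vanishing by $J'$-type of the $F^+$-contributions to the second, one obtains
   $$\beta(E)\cup[\omega']^2=\beta(E)\cup[\omega]^2-\tfrac1{2\pi^2}\|F^-\|^2\ \le\ \beta(E)\cup[\omega]^2,$$
   where $\beta(E)\cup[\omega]^2=\tfrac1{4\pi^2}\|F_A^o\|^2$ since $\|F_A^o\|^2=8\pi^2\,\beta(E)\cup[\Omega]$ and $\beta(E)\cup[\Re(\theta)]=0$ by $J$-type;
 \item[(b)] by expanding $[\omega']^2$: the cross term $\beta(E)\cup[\omega\wedge\hat v]$ vanishes because $\omega\wedge\hat v\in A_+\omega\subset\bigwedge^4_{27}$ is orthogonal to every summand of $\bigwedge^4$ occurring in $\beta(E)$, while writing $\hat v=\tfrac12(\gamma_{\hat v}+\bar\gamma_{\hat v})$ with $L\gamma_{\hat v}=\bar\gamma_{\hat v}$, $|\gamma_{\hat v}|^2=8$, gives $\hat v\wedge\hat v=\Re(\theta)+\tfrac12\gamma_{\hat v}\wedge\bar\gamma_{\hat v}$ with primitive $(2,2)$-part $\tfrac12\gamma_{\hat v}\wedge\bar\gamma_{\hat v}-\tfrac13\omega^2$, whence $\beta(E)\cup[\hat v\wedge\hat v]=\tfrac12\,\beta_0\cup\gamma_{\hat v}\cup\bar\gamma_{\hat v}+8\lambda\,\vol(X)$ and $\beta(E)\cup[\omega']^2=\cos^2 t\cdot 24\lambda\,\vol(X)+\sin^2 t\cdot\beta(E)\cup[\hat v\wedge\hat v]$.
\end{itemize}

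Comparing (a) and (b) forces $\beta_0\cup\gamma_{\hat v}\cup\bar\gamma_{\hat v}\le 32\lambda\,\vol(X)$ for all such $\hat v$; rescaling to the unit $(2,0)$-form $c=\gamma_{\hat v}/(2\sqrt2)$, which satisfies $Lc=\bar c$, yields $\tfrac14\,\tfrac{\beta_0\cup c\cup\bar c}{\vol(X)}\le\lambda$. It then remains to lift the constraint $Lc=\bar c$: decomposing an arbitrary $c\in H^{2,0}(X)$ as $c=\alpha_++\alpha_-$ with $L\alpha_\pm=\pm\bar\alpha_\pm$, one has $|c|^2=|\alpha_+|^2+|\alpha_-|^2$, and because $\beta_0$ has type $(2,2)$ and annihilates the wedge image of $A_+\otimes A_-$ (Remark~\ref{rem:aspa2}) the cross terms drop, so $\beta_0\cup c\cup\bar c=\beta_0\cup\alpha_+\cup\bar\alpha_++\beta_0\cup\alpha_-\cup\bar\alpha_-$; and with the displayed bases of $A_\pm$ (for which $c_j'=i\,c_j$) the quadratic forms $\alpha\mapsto\beta_0\cup\alpha\cup\bar\alpha$ on the slices $\{L\alpha=\bar\alpha\}$ and $\{L\alpha=-\bar\alpha\}$ coincide, so $\max\{\beta_0\cup c\cup\bar c:|c|_\omega=1\}$ is attained on the first slice. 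This gives $\lambda\ge k_m(\beta_0)$, i.e.\ $\beta(E)\cup[\omega]^2=\lambda[\omega]^4\ge k_m(\beta_0)[\omega]^4$.

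The main obstacle will be parts (a)--(b): producing the two closed forms for $\beta(E)\cup[\omega']^2$ requires carefully separating the wedge products that vanish for type reasons relative to the rotated complex structure $J'$ from those that vanish by $\SU(4)$-representation theory (orthogonality of the irreducible summands of $\bigwedge^4$), while carrying along the structure constants $\theta\wedge\bar\theta=16\,\vol$, $\omega^4=24\,\vol$, $|\omega^2|^2=24$, and $a\wedge a\wedge\Re(\theta)=\pm2|a|^2\vol$ on $A_\pm$; the concluding reduction to one slice is elementary linear algebra but genuinely uses the relation $c_j'=i\,c_j$.
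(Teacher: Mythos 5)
Your proposal does not prove the statement it was assigned. Proposition \ref{prop:Mistretta} is an \emph{existence} result: given an arbitrary algebraic class $\beta_0\in H^{2,2}(M)$, one must \emph{construct} a bundle $E$ and an $\omega$-HYM connection on it with $\beta(E)=\beta_0+NH^2$ for some integer $N$. What you have written is instead a gauge-theoretic derivation of the Bogomolov-type inequality $\beta(E)\cup[\omega]^2\geq k_m(\beta_0)\,[\omega]^4$ for an \emph{already given} polystable bundle $E$ on a torus --- essentially the content of Theorem \ref{thm:bogomolov} (and the Theorem stated in the Introduction), obtained by $\Spin$-rotating a fixed Hermite--Einstein connection inside the fixed $\Spin(7)$-structure. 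Nowhere in your argument is a bundle produced; you assume one and extract a necessary condition on its characteristic class. The logical direction is therefore opposite to what is required: Proposition \ref{prop:Mistretta} asserts that \emph{every} algebraic $(2,2)$-class becomes realizable as $\beta(E)$ of an HYM bundle after adding a sufficiently positive multiple of $H^2$, and it is precisely this freedom to add $NH^2$ that makes the statement compatible with the inequality you derive.

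Note also that in the paper this proposition is quoted from Mistretta's work and carries no proof; its actual proof is algebro-geometric, not gauge-theoretic. The substance one needs is that the algebraic part of $H^{2,2}(M)$ is generated, modulo multiples of $H^2$, by second Chern characters of $\mu$-stable bundles (constructed, e.g., from kernels of evaluation maps of high powers of an ample line bundle along algebraic cycles, whose stability must be verified), after which the Hitchin--Kobayashi correspondence (Proposition \ref{prop:HYM=stable}) converts polystability into the existence of the $\omega$-HYM connection, and the $NH^2$ term absorbs the discrepancy coming from twisting and from the $c_1^2$ correction in $\beta(E)=c_2-\frac{r-1}{2r}c_1^2$. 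None of the representation-theoretic bookkeeping in your items (a)--(b), however carefully carried out, can substitute for that construction; moreover your computation implicitly works on a complex torus with the flat $\Spin(7)$-structure and constants such as $[\omega]^4=24\,\vol$, whereas the proposition is stated for an arbitrary projective manifold, where no $\Spin(7)$- or $\SU(4)$-structure is available at all.
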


\subsection{Relationship of $\omega$-HYM and $\Spin(7)$-connections}

Now let $(M,\omega)$ be a Calabi-Yau manifold, i.e.\ $M$ has an
$\SU(4)$-structure, and therefore an induced $\Spin(7)$-structure.
Suppose that $E\to M$ is a complex rank $r$ bundle with a
hermitian metric, and let $A$ be a $\U(r)$-connection.
The following result appears already in \cite{Lewis} with 
a slightly different formulation.

\begin{proposition} \label{prop:HYM=Spin7}
 We have the following equivalent statements:
 \begin{itemize}
 \item[(a)] $A$ is a traceless $\omega$-Hermitian-Yang-Mills connection.
 \item[(b)] $A$ is a $\Spin(7)$-connection and $\beta\in H^{2,2}(M)$.
 \item[(c)] $A$ is a $\Spin(7)$-connection and $\beta \cup [\Re(\theta)]=0$.
 \end{itemize}
\end{proposition}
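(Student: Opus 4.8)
The plan is to establish the cycle of implications (a) $\Rightarrow$ (b) $\Rightarrow$ (c) $\Rightarrow$ (a), in which the first two steps are purely Hodge-theoretic bookkeeping and only the last carries analytic content. Throughout, all three conditions are understood to include $A\in\cA_E$, i.e. $\tr F_A$ harmonic, and the whole discussion concerns only the traceless part $F_A^o\in\Omega^2(\su_E)$. The basic input is Proposition \ref{prop:lem:2}: for the $\SU(4)$-structure determined by $\omega$ and $\theta$,
$$
\bigwedge\nolimits^2_7 = \la\omega\ra\oplus A_+,\qquad \bigwedge\nolimits^2_{21} = \bigtriangleup^{1,1}_{prim}\oplus A_-,
$$
so the $\Spin(7)$-instanton equation $\pi_7(F_A^o)=0$ says exactly that $F_A^o$ has no $\omega$-component and no $A_+$-component; in that case one may write $F_A^o=\phi+\psi$ with $\phi\in\bigtriangleup^{1,1}_{prim}(\su_E)=\Omega^{1,1}_{prim}(\su_E)$ and $\psi\in A_-(\su_E)$, and the entire matter reduces to deciding when $\psi=0$.

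For (a) $\Rightarrow$ (b): if $A$ is traceless $\omega$-HYM then $F_A^o\in\Omega^{1,1}_{prim}(\su_E)\subset\Omega^2_{21}(\su_E)$, so $\pi_7(F_A^o)=0$; moreover $\tr(F_A^o\wedge F_A^o)$ is then a closed form of pure type $(2,2)$, and on the compact K\"ahler manifold $M$ the class of a closed $(p,q)$-form lies in $H^{p,q}$, whence $\beta=\tfrac1{8\pi^2}[\tr(F_A^o\wedge F_A^o)]\in H^{2,2}(M)$. For (b) $\Rightarrow$ (c): since $\theta$ is a parallel holomorphic $(4,0)$-form, $[\Re(\theta)]\in H^{4,0}(M)\oplus H^{0,4}(M)$; cupping with $\beta\in H^{2,2}(M)$ lands in $H^{6,2}(M)\oplus H^{2,6}(M)=0$, so $\beta\cup[\Re(\theta)]=0$ (and the $\Spin(7)$-instanton hypothesis is unchanged).

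The content is in (c) $\Rightarrow$ (a). Write $F_A^o=\phi+\psi$ as above. The Chern--Weil identity $\int_M\tr(F_A^o\wedge F_A^o)\wedge\kappa = 8\pi^2\,\beta\cup[\kappa]$ holds for every closed form $\kappa$, in particular for the closed form $\kappa=\Re(\theta)$, and I claim that
$$
\int_M\tr(F_A^o\wedge F_A^o)\wedge\Re(\theta) = 2\,\|\psi\|^2 .
$$
Expanding $F_A^o=\phi+\psi$, the terms $\tr(\phi\wedge\phi)$ and $\tr(\phi\wedge\psi)$ are of types $(2,2)$ and $(3,1)+(1,3)$ respectively, hence wedge to zero against the $(4,0)+(0,4)$-form $\Re(\theta)$; only $\tr(\psi\wedge\psi)\wedge\Re(\theta)$ survives. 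For that term one argues exactly as in the computation of $\int_M\tr(F_A^o\wedge F_A^o)\wedge\Omega$ made before the definition of the Yang--Mills functional: one uses $\tr(\psi\wedge\psi)=-\tr\big((\overline{\psi})^t\wedge\psi\big)$, writes $\psi$ in a local orthonormal frame of the rank-$6$ bundle $A_-(T^*M)$, and inserts the pointwise identity $\alpha\wedge\alpha\wedge\Re(\theta)=-2|\alpha|^2\,\vol$ for $\alpha\in A_-$, which is read off from the proof of Proposition \ref{prop:lem:2}; this yields $\int_M\tr(\psi\wedge\psi)\wedge\Re(\theta)=2\|\psi\|^2$. (There are no frame-dependent cross terms because the $\SU(4)$-invariant symmetric bilinear form $\alpha\mapsto *(\alpha\wedge\alpha\wedge\Re(\theta))$ on the irreducible module $A_-$ is a multiple of the inner product, by Remark \ref{rem:Spin6-transitive}; alternatively one computes directly in the explicit basis $c_j'$ of Remark \ref{rem:aspa}.) Granting the claim, condition (c), namely $\beta\cup[\Re(\theta)]=0$, forces $\|\psi\|^2=0$, so $\psi=0$ and $F_A^o=\phi\in\Omega^{1,1}_{prim}(\su_E)$; together with $\tr F_A$ harmonic this is precisely (a).

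The only genuine obstacle is the norm computation in (c) $\Rightarrow$ (a): one must make sure that passing from the pointwise scalar identity on $\bigwedge^2$ to an $L^2$-norm of an $\su_E$-valued $2$-form produces no spurious cross terms and the correct nonzero constant. Since $\Re(\theta)$ is $\Spin(7)$-invariant (as is $\Omega$), this is structurally identical to the $\kappa=\Omega$ computation already carried out in the text, and the irreducibility of $A_-$ is exactly what makes the resulting constant unambiguous; so no essentially new difficulty arises.
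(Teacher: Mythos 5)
Your proposal is correct and follows essentially the same route as the paper: the cycle (a) $\Rightarrow$ (b) $\Rightarrow$ (c) $\Rightarrow$ (a), with the whole content in the last step, where $\beta \cup [\Re(\theta)]$ is identified via Chern--Weil with (a nonzero multiple of) the squared $L^2$-norm of the non-$(1,1)$ part of $F_A^o$. Packaging that part as the $A_-$-component $\psi$ and using the pointwise identity $\alpha\wedge\alpha\wedge\Re(\theta)=-2|\alpha|^2\vol$ on $A_-$ is just a reformulation of the paper's use of $L(R^{2,0})=-\overline{R^{2,0}}$ and $R^{2,0}\wedge R^{2,0}=-\tfrac14|R^{2,0}|^2\theta$, and the sign of your constant ($+2\|\psi\|^2$ versus the paper's $-4\|R^{2,0}\|^2$, which ignores the skew-hermitian trace sign) is immaterial to the conclusion $\psi=0$.
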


\begin{proof}
First assume (a). Denote $R=F_A^o$. As $A$ is traceless $\omega$-HYM, we have
that $R^{2,0}=0$ and $\la R,\omega \ra=0$. Now the decomposition $\bigwedge^2_7=\la \omega
  \ra\oplus A_+$ and $A_+\subset \bigtriangleup^{2,0}$,
  give us that $\pi_7(R)=0$, as required. Moreover,
  $\tr( R\wedge R)$ represents $\beta$, up to a factor, and
  this is of type $(2,2)$. (b) follows.

The implication (b) $\Rightarrow$ (c) is clear.

Finally, assume (c). So $R=F_A^o \in\bigwedge^2_{21}$. This means that
  $\la R,\omega\ra=0$ and $\pi_{A_+}(R^{2,0}+R^{0,2})=0$. So
  $L(R^{2,0})=-R^{0,2}=-\overline{R^{2,0}}$. This implies that
  $R^{2,0}\wedge R^{2,0}=- \frac14 |R^{2,0}|^2\, \theta$ and hence
  $$
  \begin{aligned}
  0 = 8\pi^2 \beta \cup [\Re(\theta)] = & \int_M \tr(R\wedge R)\wedge \Re(\theta) \\
  =& \int_M \tr (R^{2,0} \wedge R^{2,0} + R^{0,2}\wedge R^{0,2})\wedge (\theta+\bar{\theta})/2 \\
  =& - 2 \int_M  \frac14 |R^{2,0}|^2 \, \frac{\theta\wedge\bar{\theta}}{2} \\
  =&  - 4 \, ||R^{2,0}||^2 \, .
  \end{aligned}
  $$
   So $R^{2,0}=0$ and $A$ is traceless $\omega$-Hermitian-Yang-Mills.
\end{proof}

\section{Rotation of complex structures on the $8$-torus} \label{sec:8torus}

Now we shall study explicitly the case of the $8$-torus.
Let $V=\RR^8$ be an $8$-dimensional real vector space, and let
$\Lambda \subset V$ be a lattice. The group $\GL(8, \RR)$
acts on $V$. 
We consider the torus 
 $$
 X=V/\Lambda,
 $$ 
and let $g$ be a Riemannian flat metric on $X$. This is the same as a
metric on $V$, i.e.\ a reduction $\SO(8)\subset \GL(8,\RR)$. Note that the set
of (flat) metrics $g$ is parametrised by $\GL(8,\RR)/\SO(8)$.

A complex structure $J$ on $X$ which makes $X$ into a complex torus
is a complex structure $J:V\to V$, $J^2=-\id$. This is a reduction to
$\GL(4,\CC)\subset \GL(8,\RR)$. If we require that $J$
be compatible with a metric $g$, then we are asking for a
reduction to $\U(4)\subset \SO(8)$.
Alternatively, if there is a complex structure $J$, and we want to
put a K\"ahler metric on $X$, we look for subgroups $G\cong
\SO(8)$ containing $\U(4)$. This is parametrized by an open cone
in $\Re (\bigwedge^{1,1}) \cong \RR^{16}$. This set is $\GL(4,\CC)/\U(4)$.

If we have a complex structure $J$ with a K\"ahler metric $\omega$, and we
want to get an $\SU(4)$-structure, then we have to fix a generator
$\theta\in \bigwedge^{4,0}$ with $|\theta|=4$. This is
parametrized by $\U(4)/\SU(4)=S(\bigwedge^{4,0})$.

Now suppose that we have an $\SU(4)$-structure
$(J,\omega,g,\theta)$. If we conjugate by an element $\phi\in
\Spin(7)$, then we get a new $\SU(4)$-structure
$(J',\omega',g,\theta')=\phi_*(J,\omega,g,\theta)$, where
 $$
 \Omega=\frac12 \omega^2+\Re(\theta)=\frac12 (\omega')^2+\Re(\theta').
 $$
By Lemma \ref{lem:1}, the possible K\"ahler forms are elements of 
$\bigwedge^2_7=\la \omega\ra \oplus
A_+$ of norm $2$. Therefore
 \begin{equation}\label{eqn:omega'}
 \omega'=2\, \frac{\omega+\gamma}{|\omega+\gamma|}
 \end{equation}
and $\gamma\in A_+$ (note that the metric is fixed in this process, so
$| \cdot |$ has a clear meaning). 
Note that $\gamma=\frac12 (c +\bar c )$, $c \in \bigwedge^{2,0}_{\omega}$. The condition
$\gamma\in A_+$ is rewritten as $L(c )=\bar c $, which in turn can be rewritten as
$c  \wedge c  = |c |^2 \, \frac\theta4$. There is a possible $\theta$ satisfying this
if and only if $|c  \wedge c | = |c |^2$.

On a torus $X=V/\Lambda$ there is an isomorphism $H^{i,j}(X)\cong \bigwedge^{i,j} V=
 \bigwedge^{i,j}$, which identifies the cup product and the wedge on $\bigwedge^{*} V$.
We shall use this isomorphism implicitly.

\begin{proposition} \label{prop:rotation}
  Let $X$ be an $8$-torus with an $\SU(4)$-structure. Let $E\to X$ be a $\U(r)$-bundle
with $\beta=\beta(E)\in H^4(X)$ and $A\in \cA_E$.
Take $\gamma\in A_+$, $\gamma=\frac12 (c +\bar c )$, $c \in \bigwedge^{2,0}_{\omega}$,
and $\omega'= 2 (\omega+\gamma)/|\omega+\gamma|$.

Suppose $\beta \in \bigwedge^{2,2}_{\omega}$. Then $A$ is traceless $\omega'$-HYM if and only
if the following conditions are satisfied:
\begin{itemize}
 \item $A$ is $\Spin(7)$-instanton,
 \item $\beta \wedge c  \wedge \bar c = \frac14 |c |^2 (\beta \wedge\omega^2)$,
 where $\frac12 \omega^2 \wedge c  \wedge \bar c = |c |^2 \vol$.
\end{itemize}
\end{proposition}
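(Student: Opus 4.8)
The plan is to reduce the claim to Proposition~\ref{prop:HYM=Spin7}, applied to the Calabi--Yau structure determined by $(\omega',\theta')$, and then to make the resulting cohomological condition completely explicit on the flat torus.

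First I would observe that, by Lemma~\ref{lem:1}, the form $\omega'$ together with $\theta'$ (determined by $\Re(\theta')=\Omega-\frac12(\omega')^2$) defines an $\SU(4)$-structure $(J',\omega',g,\theta')$ which induces the \emph{same} $4$-form $\Omega$; since $J'$ is a constant complex structure on $X$ it is automatically integrable, so $(X,\omega',\theta')$ is a Calabi--Yau $4$-fold. Because the $\Spin(7)$-instanton equation $\pi_7(F_A^o)=0$ depends only on $\Omega$, it is literally the same equation for both $\SU(4)$-structures. Applying the equivalence of statements (a) and (c) in Proposition~\ref{prop:HYM=Spin7} to $(\omega',\theta')$ then gives: $A$ is traceless $\omega'$-HYM if and only if $A$ is a $\Spin(7)$-instanton and $\beta\cup[\Re(\theta')]=0$. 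This immediately yields the first bullet, and reduces the proposition to showing that, \emph{assuming only} $\beta\in\bigwedge^{2,2}_\omega$, the vanishing $\beta\cup[\Re(\theta')]=0$ is equivalent to $\beta\wedge c\wedge\bar c=\frac14|c|^2(\beta\wedge\omega^2)$.

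For this I would compute $\Re(\theta')=\Omega-\frac12(\omega')^2$ directly. Writing $t=|\omega+\gamma|$ and using $|\omega|=2$, the orthogonality $\omega\perp\gamma$ (valid since $\gamma\in A_+$), and $|\gamma|^2=|\Re(c)|^2=\frac12|c|^2$, one gets $t^2=4+\frac12|c|^2$; squaring $\omega'=\frac2t(\omega+\gamma)$ and using the characterisation $\gamma\in A_+$, i.e.\ $c\wedge c=\frac14|c|^2\theta$, gives $\gamma\wedge\gamma=\frac18|c|^2\Re(\theta)+\frac12\,c\wedge\bar c$. Hence $\Re(\theta')$ is an explicit linear combination of $\omega^2$, $\Re(\theta)$, $\omega\wedge\gamma$ and $c\wedge\bar c$, with coefficients depending only on $t^2$ and $|c|^2$. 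Now wedging with $\beta$: since $\beta$ is represented by a constant form of Hodge type $(2,2)$ for $J$ (harmonic forms on the flat torus are constant, and we use $H^{2,2}(X)\cong\bigwedge^{2,2}$), the terms $\beta\wedge\Re(\theta)$ and $\beta\wedge\omega\wedge\gamma$ vanish for pure bidegree reasons (their Hodge types fall outside $(4,4)$), and the identity collapses to $\beta\cup[\Re(\theta')]=\big(\frac12-\frac2{t^2}\big)\,\beta\cup[\omega^2]-\frac1{t^2}\,\beta\cup[c\wedge\bar c]$. Setting this equal to zero and substituting $t^2=4+\frac12|c|^2$ (so $\frac{t^2}{2}-2=\frac14|c|^2$) produces exactly $\beta\wedge c\wedge\bar c=\frac14|c|^2(\beta\wedge\omega^2)$; the side identity $\frac12\omega^2\wedge c\wedge\bar c=|c|^2\vol$ is the standard normalisation for $(2,0)$-forms, recorded only so that both sides are read as multiples of $\vol$. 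The degenerate case $c=0$ (where $\omega'=\omega$) is covered automatically.

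The only genuinely delicate point is the bookkeeping of the normalisations in the computation of $\Re(\theta')$ — pinning down $|\omega+\gamma|^2$, $\gamma\wedge\gamma$ and all numerical factors — together with the observation that the cross terms vanish \emph{precisely} because $\beta$ has Hodge type $(2,2)$ for the original complex structure $J$ (not $J'$). Beyond Proposition~\ref{prop:HYM=Spin7}, no analytic input is required.
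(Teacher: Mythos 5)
Your proposal is correct and follows essentially the same route as the paper: it reduces the statement via Proposition~\ref{prop:HYM=Spin7} (applied to the rotated $\SU(4)$-structure, which shares the same $\Omega$) to the condition $\beta\cup[\Re(\theta')]=0$, and then expands this using $\Re(\theta')=\Omega-\tfrac12(\omega')^2$, the orthogonality $\omega\perp\gamma$ with $|\omega|=2$, and the vanishing of the cross terms because $\beta$ has type $(2,2)$ for $J$. The paper organizes the same computation as the equivalence $\beta\wedge\omega^2=\beta\wedge(\omega')^2$, but the normalizations and conclusions agree.
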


\begin{proof}
By Proposition \ref{prop:HYM=Spin7}, $A$ is traceless $\omega'$-HYM
$\iff$ $A$ is a $\Spin(7)$-instanton
and $\beta \cup [\Re(\theta')]=0$. So we need to see that this latter
condition is satisfied. 

As $\beta$ is of type $(2,2)$ w.r.t.\ $\omega$, we have that 
$\beta \cup [\Re(\theta)]=0$. The equality 
$\beta \wedge \Re(\theta)=\beta \wedge \Re(\theta')=0$ is equivalent to
 $$
 \beta \wedge \omega^2= \beta \wedge (\omega')^2\, ,
 $$
since $\Omega= \frac12 \omega^2 + \Re(\theta)=\frac12 (\omega')^2+ \Re(\theta')$ is fixed.
We rewrite this as
 $$
 \frac14 |\omega+\gamma|^2 (\beta \wedge \omega^2)= \beta \wedge (\omega+\gamma)^2\, .
 $$
Now $|\omega+\gamma|^2= |\omega|^2+|\gamma|^2 = 4+ |\gamma|^2$, since $\bigwedge^2_7=\la \omega\ra \oplus A_+$
is orthogonal. So we get
 $$
 \frac14 |\gamma|^2 (\beta \wedge \omega^2)= 2\beta \wedge \omega \wedge \gamma + \beta \wedge\gamma^2 =
\beta \wedge\gamma^2 \, ,
 $$
since $\beta \wedge \omega \wedge \gamma=0$, because $\beta$ is of type $(2,2)$ (w.r.t.\ $\omega$).

Now using that $\gamma=\frac12 (c + \bar{c})$, we rewrite this as
 $$
 \frac14 |c |^2 (\beta \wedge \omega^2)= \beta \wedge c  \wedge \bar c  \, ,
 $$
as required.
\end{proof}

\begin{definition} \label{def:new}
  A cohomology class $\beta\in H^4(X,\ZZ)$ is \emph{traceless $\omega$-HYM} if
 there is a bundle $E\to X$ with $\beta=\beta(E)$,
 and a connection $A\in \cA_E$ which is traceless $\omega$-HYM.
\end{definition}

\begin{theorem}\label{thm:rotation}
 Let $X$ be an $8$-torus with an $\SU(4)$-structure.
 Take $\gamma\in A_+$, $\gamma=\frac12 (c +\bar c )$,
 and $\omega'=2(\omega+\gamma)/|\omega+\gamma|$. Let $\beta \in H^4(X,\ZZ)$. If
 \begin{itemize}
  \item $\beta$ is traceless $\omega$-HYM.
  \item $\beta \wedge c  \wedge \bar c  = 6k \, |c |^2 \vol$, where $\beta\wedge \omega^2= 24k\, \vol$
  and $\omega^2 \wedge c  \wedge \bar c = 2|c |^2\vol$.
 \end{itemize}
Then $\beta$ is traceless $\omega'$-HYM.
 \end{theorem}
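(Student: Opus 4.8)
The plan is to deduce Theorem \ref{thm:rotation} from Proposition \ref{prop:rotation} by unwinding the hypotheses. Since $\beta$ is traceless $\omega$-HYM, there is a bundle $E\to X$ with $\beta(E)=\beta$ carrying a traceless $\omega$-HYM connection $A\in\cA_E$; by Proposition \ref{prop:HYM=Spin7}, $A$ is in particular a $\Spin(7)$-instanton, and (since traceless $\omega$-HYM means $F_A^o\in\Omega^{1,1}_{prim}$) the class $\beta$ lies in $\bigwedge^{2,2}_\omega$. So the standing hypothesis ``$\beta\in\bigwedge^{2,2}_\omega$'' of Proposition \ref{prop:rotation} is met, as is its first bullet. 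It then remains only to check that the second bullet of Proposition \ref{prop:rotation}, namely
\[
\beta\wedge c\wedge\bar c=\tfrac14|c|^2\,(\beta\wedge\omega^2)\quad\text{where }\tfrac12\omega^2\wedge c\wedge\bar c=|c|^2\vol,
\]
is equivalent to the second bullet of the theorem.

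The core of the argument is therefore a bookkeeping of normalisation constants. Writing $\beta\wedge\omega^2=24k\,\vol$ for some scalar $k$ (this defines $k$; the integrality of $\beta$ is not needed here, only that $\beta\wedge\omega^2$ is a top form), the right-hand side of the displayed identity becomes $\tfrac14|c|^2\cdot 24k\,\vol=6k|c|^2\,\vol$, which is exactly the right-hand side of the theorem's second bullet $\beta\wedge c\wedge\bar c=6k|c|^2\,\vol$. For the auxiliary normalisation I use the identity $\omega^4=24\,\vol$ established in Section \ref{sec:spin7}: for $c\in\bigwedge^{2,0}_\omega$ one has the standard pointwise formula $\omega^2\wedge c\wedge\bar c=2|c|^2\,\vol$ (equivalently $\tfrac12\omega^2\wedge c\wedge\bar c=|c|^2\,\vol$), so the two stated normalisations of $\omega^2\wedge c\wedge\bar c$ agree. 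Hence the two bullets coincide verbatim after substitution, and Proposition \ref{prop:rotation} yields that $A$ is traceless $\omega'$-HYM; by Definition \ref{def:new}, $\beta$ is traceless $\omega'$-HYM.

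The only genuinely substantive point to pin down is the pointwise identity $\omega^2\wedge c\wedge\bar c=2|c|^2\vol$ for a $(2,0)$-form $c$ with respect to the Hermitian norm; this is a linear-algebra computation on $\bigwedge^{2,0}V$ that can be checked on the orthogonal basis $\{dz_{ij}\}_{i<j}$ (for which $|dz_{ij}|=2$ in the conventions of the paper, since $|\omega|=2$ and $\omega^4=24\vol$ forces the scaling), reducing to $\omega^2\wedge dz_{ij}\wedge d\bar z_{ij}=8\,\vol$. Apart from this, the proof is a direct substitution: I would state that $A$ exists by Definition \ref{def:new}, invoke Proposition \ref{prop:HYM=Spin7} to get the $\Spin(7)$-instanton condition and $\beta\in\bigwedge^{2,2}_\omega$, match constants as above, and conclude by Proposition \ref{prop:rotation}. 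I expect no real obstacle; the main care is simply in not misplacing the factors of $2$, $4$, $6$, $24$ relating $|c|^2$, $\vol$, $\omega^2$, and $\beta\wedge\omega^2$.
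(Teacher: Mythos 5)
Your proposal is correct and follows essentially the same route as the paper: take a bundle and traceless $\omega$-HYM connection realising $\beta$ (Definition \ref{def:new}), use Proposition \ref{prop:HYM=Spin7} to get the $\Spin(7)$-instanton condition and $\beta\in\bigwedge^{2,2}_\omega$, match the normalisation constants via $\beta\wedge\omega^2=24k\,\vol$ so that the theorem's second bullet becomes $\beta\wedge c\wedge\bar c=\tfrac14|c|^2(\beta\wedge\omega^2)$, and conclude by Proposition \ref{prop:rotation}. Your extra verification of the pointwise identity $\omega^2\wedge c\wedge\bar c=2|c|^2\vol$ is a correct (and slightly more careful) spelling-out of a fact the paper uses implicitly.
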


\begin{proof}
 Take a bundle $E\to X$ with $\beta=\beta(E)$
 and a $\omega$-HYM connection $A$ on $E$. 
 Then $A$ is a $\Spin(7)$-connection and of type $(2,2)$ by
 Proposition \ref{prop:HYM=Spin7}. Now  $\beta \wedge c  \wedge \bar c  = 6k \, |c |^2 \vol=
 \frac14 |c |^2 (\beta\wedge \omega^2)$. Proposition \ref{prop:rotation} gives the result. 
 \end{proof}

The equality defined in the second line of Theorem \ref{thm:rotation} is rewritten as
 \begin{equation}\label{eqn:main}
  ( \beta -3k\, \omega^2) \wedge\gamma^2=0\, ,
 \end{equation}
where 
 $$
 \beta = \beta_0 + \omega\wedge \beta_1 + k\, \omega^2,
 $$ 
$\beta_0,\beta_1$ are primitive $(2,2)$
and $(1,1)$-forms, respectively.

\begin{remark} \label{rem:rotate-back}
Equation (\ref{eqn:main}) has a natural symmetry
when we \emph{rotate back} from
$\omega'$ to $\omega$. First, we compute the value $k'$ of $\beta$ with respect to
$\omega'$.
 $$
 \alpha \wedge (\omega')^2 = \alpha \wedge \left( 2 \frac{\omega+\gamma}{|\omega+\gamma|}\right)^2 
= 4\alpha \wedge \frac{\omega^2+2 \gamma\wedge \omega+\gamma^2}{4+|\gamma|^2} 
= 4  \frac{24 k  + 6k |\gamma|^2}{4+|\gamma|^2} \vol = 24k\, \vol ,
 $$ 
and 
 $$ 
(\omega')^4 = \frac{16(\omega+\gamma)^4}{|\omega+\gamma|^4}  
=\frac{16(24 + 6 \cdot 2 |\gamma|^2 + \frac32 |\gamma|^4)}{(4+|\gamma|^2)^2}\vol  
= \frac{24\cdot 16 ( 1+ \frac14|\gamma|^2)^2 }{(4+|\gamma|^2)^2}  \vol =24 \vol,
 $$
where we have used that
 $$
 \gamma^4 = \frac{1}{16} (c +\bar c )^4 = \frac{6}{16} c ^2\wedge \bar c ^2 
 = \frac{6}{16} |c |^4 \frac{1}{16} \theta \wedge \bar{\theta} 
 =\frac32 |\gamma|^4 \vol.
 $$
So $k'=k$. Now to rotate back, we have to take
 $$
  \gamma'= - \frac{2}{|\omega+\gamma|} \left( \gamma- \frac14 |\gamma|^2\omega\right) \in A_+'\, ,
  $$
where $\bigwedge^2_7=\la \omega'\ra \oplus A_+'$ orthogonally
(note that $|\gamma'|=|\gamma|$ and $\la \omega',\gamma'\ra=0$). Then 
 $$
 \omega= 2\frac{\omega'+\gamma'}{|\omega'+\gamma'|}\, .
 $$
We compute
 \begin{align*}
  ( \alpha - & 3k\, (\omega')^2) \wedge (\gamma')^2 =
  \left( \alpha -3k\, \left(2 \frac{\omega+\gamma}{|\omega+\gamma|}\right)^2 \right)
  \wedge \frac{4}{|\omega+\gamma|^2} \left( \gamma- \frac14 |\gamma|^2\omega\right)^2 \\
  &=
 \frac{4}{|\omega+\gamma|^2} \left(
 \alpha \wedge \left( \gamma- \frac14 |\gamma|^2\omega\right)^2
 -12 k\, \left(\frac{\omega+\gamma}{|\omega+\gamma|}\right)^2
  \wedge \left( \gamma- \frac14 |\gamma|^2\omega\right)^2 \right)\\
  &=
 \frac{4}{|\omega+\gamma|^2} \left(
 \left(6k |\gamma|^2 + \frac{1}{16}|\gamma|^4 24k \right)
 -  \frac{12k}{|\omega+\gamma|^2}
\left(\frac{1}{16} |\gamma|^4 \omega^4 + \gamma^4 +(1-|\gamma|^2+\frac{1}{16}|\gamma|^4) \omega^2\wedge\gamma^2\right)
\right)\\
  &= \frac{4}{4+|\gamma|^2} \left( \frac64 k |\gamma|^2 (4  + |\gamma|^2 )
 - \frac{12k}{4+|\gamma|^2}\left(\frac{24}{16} |\gamma|^4  +
 \frac32 |\gamma|^4 + 2 |\gamma|^2(1-|\gamma|^2+\frac{1}{16}|\gamma|^4) \right) \right)\\
  &= \frac{4}{4+|\gamma|^2} \left( \frac64 k |\gamma|^2 (4  + |\gamma|^2 )
 -\frac{24k}{4+|\gamma|^2}  (1 + \frac14 |\gamma|^2)^2  |\gamma|^2\right)  
 =0\, ,
 \end{align*}
so equation (\ref{eqn:main}) is satisfied w.r.t.\ $\omega'$, as expected.
\end{remark}

\medskip

We shall need the following later.

\begin{definition}
We say that $\beta\in H^4(X,\QQ)$ is asymptotically traceless $\omega$-HYM if there
is some $N\gg 0$ such that $N\, \beta$ is traceless $\omega$-HYM.
\end{definition}

A class $\beta$ that is asymptotically traceless $\omega$-HYM in a projective
manifold $(X,\omega)$ is a (rational) algebraic cycle. By Definition \ref{def:new}, there
is some bundle $E\to X$ such that $N\, \beta =\beta(E)$, $N\gg 0$, and a connection 
$A\in\cA_E$ which is $\omega$-HYM. Therefore $N\, \beta$ is an algebraic cycle
by Lemma \ref{lem:9}, and thus $\beta$ is a rational algebraic cycle.

\smallskip

Let $X=\CC^4/\Lambda$, $X'=\CC^4/\Lambda'$ be isogenous complex tori,
then the asymptotically traceless HYM cycles are the same. We may suppose
that $\pi:X'\to X$ is a finite covering, corresponding to an inclusion
$\Lambda' \subset \Lambda \subset V=\CC^4$. There is a natural isomorphism
$\pi^*:H^*(X,\QQ) \stackrel{\cong}{\too} H^*(X',\QQ)$.

\begin{lemma}\label{lem:asymp}
Fix K\"ahler forms $\omega$ on $X$ and $\omega'=\pi^*\omega$ on $X'$.
Under $\pi^*$, the asymptotically traceless HYM classes on $X$ and on $X'$ are the same.
\end{lemma}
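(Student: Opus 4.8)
The plan is to reduce the whole statement to one observation: pulling a bundle together with a traceless HYM connection back along an isogeny, and pulling the Kähler form back with it, preserves the traceless HYM property; the ``asymptotic'' clause of the definition then lets this be run in both directions, the only extra bookkeeping being a factor of the form $n^4$ and a rescaling of $\omega$.

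First I would establish a \emph{pull-back lemma}: for any isogeny $p\colon Y_1\to Y_2$ of complex tori, any Kähler form $\omega_2$ on $Y_2$, and $\omega_1:=p^*\omega_2$ (a Kähler form on $Y_1$ inducing the pulled-back flat metric), if $\gamma\in H^4(Y_2,\QQ)$ is asymptotically traceless $\omega_2$-HYM then $p^*\gamma$ is asymptotically traceless $\omega_1$-HYM. Given a witness $E\to Y_2$ and $A\in\cA_E$ traceless $\omega_2$-HYM with $\beta(E)=N\gamma$, I would pull everything back to $(p^*E,p^*A)$. Naturality of Chern classes gives $\beta(p^*E)=p^*\beta(E)=N\,p^*\gamma$. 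Since $p$ is a holomorphic local biholomorphism with $p^*\omega_2=\omega_1$, the operator $p^*$ is injective on forms, takes harmonic forms to harmonic forms, takes forms that are primitive of type $(1,1)$ for $\omega_2$ to forms that are primitive of type $(1,1)$ for $\omega_1$, and commutes with $\tr$ and with $F\mapsto F^o$; hence $p^*A\in\cA_{p^*E}$ is traceless $\omega_1$-HYM, so $N\,p^*\gamma$ is traceless $\omega_1$-HYM. Taking $p=\pi$, $\omega_2=\omega$, $\omega_1=\omega'$, this already gives one inclusion: $\beta$ asymptotically traceless $\omega$-HYM on $X$ implies $\pi^*\beta$ asymptotically traceless $\omega'$-HYM on $X'$.

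For the reverse inclusion I would exploit that an isogeny has an isogeny going the other way. Choosing $n\in\ZZ_{>0}$ with $n\Lambda\subset\Lambda'$, the quotient map $V/\Lambda\to V/(\frac1n\Lambda')$ followed by the biholomorphism $V/(\frac1n\Lambda')\cong V/\Lambda'=X'$ induced by $v\mapsto nv$ is an isogeny $\rho\colon X\to X'$ with $\pi\circ\rho=[n]$, multiplication by $n$ on $X$. As $[n]$ acts on $V$ by the scalar $n$, one has $\rho^*\omega'=[n]^*\omega=n^2\omega$ and $(\pi\circ\rho)^*\beta=[n]^*\beta=n^4\beta$, so the pull-back lemma applied to $p=\rho$ (with $\omega_2=\omega'$, $\gamma=\pi^*\beta$) turns ``$\pi^*\beta$ asymptotically traceless $\omega'$-HYM'' into ``$n^4\beta$ asymptotically traceless $n^2\omega$-HYM on $X$''. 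It then remains to remove the $n^2$ and the $n^4$. For the first: replacing $\omega$ by $\lambda\omega$ ($\lambda>0$) changes neither harmonicity on a flat torus nor the condition $\alpha\wedge\omega^3=0$, so ``traceless $\lambda\omega$-HYM'' and ``traceless $\omega$-HYM'' coincide, and likewise their asymptotic versions. For the second: I claim that for fixed $\omega$ and any $c\in\ZZ_{>0}$, $\beta$ is asymptotically traceless $\omega$-HYM iff $c\,\beta$ is. One direction is immediate from the definition; for the other, starting from a witness $E$ with $\beta(E)=N\beta$ and $A$ traceless $\omega$-HYM, I would use $\su_E^{\oplus c}$ with the connection $\tilde A^{\oplus c}$, where $\tilde A$ is the connection on $\su_E$ from Lemma \ref{lem:9}: this bundle has $c_1=0$ (so that $\beta$ is additive under $\oplus$, giving $\beta(\su_E^{\oplus c})=c\,\beta(\su_E)=(2rN)(c\,\beta)$), and is traceless $\omega$-HYM by Lemma \ref{lem:9} applied blockwise (block-diagonal curvature, each block primitive of type $(1,1)$, total trace zero). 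Chaining these implications with $c=n^4$ gives $\pi^*\beta$ asymptotically traceless $\omega'$-HYM $\Rightarrow\beta$ asymptotically traceless $\omega$-HYM, the remaining inclusion.

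I expect the only genuinely delicate point to be this last step: the ``asymptotic'' quantifier must be robust enough to absorb simultaneously the factor $n^4$ coming from $(\pi\circ\rho)^*$ and the rescaling of the Kähler form, and the device that makes this work is passing to $\su_E$, which normalizes $c_1$ to zero and thereby makes the invariant $\beta$ additive under direct sums. The remaining ingredients — naturality of Chern classes, the behaviour of harmonic and primitive forms under a local biholomorphism, and the existence of the reverse isogeny $\rho$ — are routine.
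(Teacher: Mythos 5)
Your proof is correct, and your forward inclusion (pull back the bundle and the traceless HYM connection along $\pi$) is exactly the paper's. For the reverse inclusion you take a genuinely different route. The paper descends the bundle: given a witness $E'\to X'$, it forms $E=(\pi_*E')^G$ with $G=\Lambda/\Lambda'$, notes $\pi^*E\cong E'$, and checks that stability descends by comparing slopes of subsheaves, i.e.\ it works at the level of polystable holomorphic bundles (implicitly via the correspondence between traceless HYM connections and polystability). You instead stay entirely gauge-theoretic: you build the reverse isogeny $\rho\colon X\to X'$ with $\pi\circ\rho=[n]$, pull the connection back along $\rho$, and absorb the resulting factors --- $n^2$ on the K\"ahler form by scale-invariance of harmonicity, type and primitivity, and $n^4$ on the class by the asymptotic quantifier. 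What your approach buys: no descent of sheaves, no stability check, and no appeal to the Hitchin--Kobayashi correspondence, which is slightly delicate here anyway since for a traceless HYM connection $c_1(E)$ need not be of type $(1,1)$ --- a point the paper's own proof glosses over when it speaks of ``the corresponding holomorphic polystable bundle''. What the paper's approach buys: it shows that the traceless HYM property itself descends under $\pi$, not only its asymptotic version, whereas your argument genuinely needs the factor $N\gg 0$ to swallow $n^4$ --- which is, however, all the lemma asserts. One small remark: of the two directions of your auxiliary claim ``$\beta$ asymptotically HYM iff $c\,\beta$ is'', only the trivial one ($c\,\beta$ asymptotic $\Rightarrow$ $\beta$ asymptotic, absorbing $c$ into $N$) is actually used in your chain; the $\su_E^{\oplus c}$ construction is fine, and consistent with Lemma \ref{lem:9}, but superfluous.
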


\begin{proof}
If $\beta\in H^4(X)$ is traceless $\omega$-HYM, then $\pi^*\beta \in H^4(X')$ is $\omega'$-HYM. 
Just pull-back the bundle and connection under $\pi:X'\to X$. 

If $\beta' \in H^4(X')$ is $\omega'$-HYM. Let $E'$ be the corresponding
holomorphic polystable bundle with $\beta'=\beta(E)$. 
Assume for the moment that $E'$ is stable.
Consider $E=(\pi_* E')^G$, where 
$G=\Lambda/\Lambda'$, which is a finite group. Then $E$ is a holomorphic bundle
such that $\pi^* E\cong E'$. $E$ is stable with respect to $\omega$: consider
a subsheaf $F\subset E$, then $\pi^*F\subset E'$, hence $\deg_{\omega'} \pi^*F < 0$
and thus $\deg_\omega F < 0$. Finally, if $E'$ is a direct sum of stable bundles, then
the same holds for $E$.
\end{proof}

\section{A Bogomolov type inequality for complex $4$-tori}

Let us consider a K\"ahler $4$-torus $X=\CC^4/\Lambda$ 
with K\"ahler form $\omega \in \bigwedge^{1,1}$. Let $E\to X$ be 
an $\omega$-polystable holomorphic bundle. The celebrated Bogomolov
inequality says that \cite{Bando-Siu} 
  \begin{equation}\label{eqn:bogo}
  \left( c_2-\frac{r-1}{2r}c_1^2 \right) \cup [\omega]^2 \geq 0.
  \end{equation}
We want to strengthen this inequality by using the theory of
$\Spin(7)$-instantons developed in previous sections.

First of all, note that the (poly)stability condition is not
affected by tensoring $E$ with a line bundle $L$. The invariant
 $$
\beta(E)=c_2(E)-\frac{r-1}{2r}c_1(E)^2 \in H^4(M,\QQ)
 $$
satisfies that $\beta(E\otimes L)=\beta(E)$ for any line bundle
$L$ (actually it is the only such invariant of degree $4$ up
to a scalar multiple).

As $E$ is a $\omega$-polystable bundle, it admits a $\omega$-HYM
connection $A$ by Proposition \ref{prop:HYM=stable}. Note that $\beta \in \bigwedge^{2,2}$. We decompose 
 \begin{equation}\label{eqn:k}
 \beta= \beta_0 + \beta_1\wedge\omega + k\, \omega^2\, ,
 \end{equation}
where $\beta_0\in \bigwedge\nolimits^{2,2}_{prim}$
and $\beta_1\in \bigwedge\nolimits^{1,1}_{prim}$. 
Bogomolov inequality (\ref{eqn:bogo}) says that 
 $$
 k\geq 0.
 $$
Moreover, if $k=0$  then $\beta=0$ (see \cite{Bando-Siu}).

\begin{proposition} \label{prop:17}
 Fix an $\SU(4)$-structure on $X$, by fixing $\theta \in \bigwedge^{4,0}$. 
 Let $\beta\in\bigwedge^{2,2}$ be an $\omega$-HYM class.
 Then we have 
 \begin{itemize}
 \item either $(\beta-3k\, \omega^2)\wedge \gamma^2\leq 0$, for all $\gamma\in A_+$, 
 \item or $(\beta-3k\, \omega^2)\wedge \gamma^2 \geq 0$, for all $\gamma\in A_+$. 
 \end{itemize}
 ($k$ is the value in (\ref{eqn:k}) and $A_+\subset \bigtriangleup^{2,0}$ is defined in
 (\ref{eqn:A+})).
\end{proposition}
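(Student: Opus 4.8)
The plan is to reduce the dichotomy to a sign statement about a single quadratic form on the $6$-dimensional space $A_+$, and then invoke the transitivity of the $\SU(4)$-action on spheres in $A_+$ (Remark \ref{rem:Spin6-transitive}). Concretely, define the map $Q:A_+\to\RR$ by
\[
 Q(\gamma) = \frac{(\beta-3k\,\omega^2)\wedge\gamma^2}{\vol}\,.
\]
Since $\gamma\mapsto\gamma^2$ is quadratic in $\gamma$, the function $Q$ is a homogeneous quadratic form on $A_+$. The claim to be proven is exactly that $Q$ is either everywhere $\le 0$ or everywhere $\ge 0$ on $A_+$, i.e.\ that $Q$ is semidefinite.

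**Key steps.** First I would make the identification from Remark \ref{rem:aspa} explicit: under the wedge-product isomorphism $\Sym^2\bigwedge^2_7\cong\bigwedge^4_{27}\oplus\bigwedge^4_1$, the restriction to $\Sym^2 A_+$ lands in $\Delta^{2,2}_{prim}\oplus\la\Re(\theta)+8\omega^2\ra$, as in \eqref{eqn:aspa}. Pairing $\gamma^2$ against the $(2,2)$-class $\beta-3k\,\omega^2$ (which, being of type $(2,2)$, pairs trivially with $\bigwedge^4_{35}=\bigwedge^4_-$ and with the $\bigwedge^2_7\otimes\bigwedge^2_{21}$ image), one sees that $Q$ only sees the component of $\beta-3k\,\omega^2$ lying in $\Delta^{2,2}_{prim}\oplus\la\omega^2\ra$. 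The second and decisive step is to use that $\SU(4)$ acts on $A_+\cong\RR^6$ through $\SO(6)$ (Remark \ref{rem:Spin6-transitive}), and that $\beta$ — coming from an $\omega$-HYM connection — is $\SU(4)$-invariant only if we are careful; in fact $\beta$ is \emph{not} invariant, so instead I would diagonalise $Q$ as a symmetric bilinear form on $A_+$ and argue that its eigenvalues cannot have mixed signs. The cleanest route: note that the quadratic form $\gamma\mapsto\gamma^2$ on $A_+$ has a \emph{definite} structure — explicitly, with the orthogonal basis $\gamma_1,\dots,\gamma_6$ of Remark \ref{rem:aspa}, one has $\gamma_i\wedge\gamma_j=0$ for $i\ne j$ and $\gamma_i\wedge\gamma_i = 8\,\frac{\theta}{4} = 2\theta$ (real part $2\Re\theta$ up to the factor from $\gamma_i=\tfrac12(c_i+\bar c_i)$, giving $\gamma_i^2 = \frac14 c_i^2 + \frac14\bar c_i^2$ since the cross terms $c_i\wedge\bar c_i$ contribute a fixed multiple of $\omega^2$). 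So for $\gamma=\sum t_i\gamma_i$,
\[
 \gamma^2 = \sum_i t_i^2\,\gamma_i^2 + \sum_{i\ne j} t_it_j\,\gamma_i\wedge\gamma_j
          = \Big(\sum_i t_i^2\Big)\,\gamma_1^2\,,
\]
because all the off-diagonal wedges vanish and all the diagonal ones are equal. Hence $\gamma^2$ is, up to the positive scalar $|\gamma|^2/|\gamma_1|^2$, a \emph{fixed} $4$-form independent of the direction of $\gamma$. Therefore $Q(\gamma) = c\,|\gamma|^2$ for a single constant $c = \frac{(\beta-3k\omega^2)\wedge\gamma_1^2}{|\gamma_1|^2\,\vol}$, and the sign of $Q$ is constant — which is even stronger than the stated dichotomy.

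**Main obstacle.** The delicate point is verifying the computation $\gamma_i\wedge\gamma_j=0$ for $i\ne j$ together with $\gamma_i^2$ being independent of $i$ modulo the subspace that pairs trivially with $\beta-3k\,\omega^2$; the paper asserts the needed wedge identities ($c_j\wedge c_j = 8\tfrac\theta4$, $c_i\wedge c_j=0$ for $i\ne j$) in Remark \ref{rem:aspa}, so I would just quote them, but care is needed because $\gamma_i^2$ also has a $c_i\wedge\bar c_i$ piece proportional to $\omega^2$, and one must check this piece is the \emph{same} for all $i$ (it is, since $|c_i|$ is constant on the chosen basis). Once that bookkeeping is done the proposition is immediate; the only real content is the representation-theoretic fact that $\Sym^2 A_+$ meets the $(2,2)$-classes only in $\Delta^{2,2}_{prim}\oplus\la\omega^2,\Re\theta\ra$ and that the "diagonal" $\sum t_i^2\gamma_i^2$ is a single fixed class. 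If one prefers not to rely on the explicit basis, the alternative is: $\SU(4)\cong\Spin(6)$ acts transitively on the unit sphere of $A_+$, so $Q$ restricted to that sphere, being continuous, takes a max and a min; if these had opposite signs $Q$ would vanish somewhere on the sphere, but by transitivity $Q$ is then forced to be identically zero on the sphere unless $Q$ is $\SU(4)$-equivariantly a multiple of $|\gamma|^2$ — this needs that $\beta-3k\omega^2$ decomposes $\SU(4)$-equivariantly and that the quadratic map $\gamma\mapsto\gamma^2$ is $\SU(4)$-equivariant (which it is), so the pairing against any fixed target class is a quadratic form invariant under no smaller group, hence proportional to $|\gamma|^2$ on each irreducible piece; that pins down the sign. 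I would present the explicit-basis version as the primary argument since it is self-contained given Remark \ref{rem:aspa}.
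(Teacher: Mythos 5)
Your central computation is wrong, and with it the whole primary argument. From Remark \ref{rem:aspa} you may quote $c_i\wedge c_j=0$ for $i\neq j$ and $c_j\wedge c_j=8\tfrac{\theta}{4}$, but these only control the $(4,0)$ and $(0,4)$ pieces: for $\gamma_i=\tfrac12(c_i+\bar c_i)$ the cross terms $c_i\wedge\bar c_j$ are nonzero $(2,2)$-forms when $i\neq j$, so $\gamma_i\wedge\gamma_j\neq 0$ in general; and the diagonal $(2,2)$ pieces $\Re(c_i\wedge\bar c_i)$ have different primitive parts for different $i$, so the $\gamma_i^2$ are \emph{not} all equal modulo classes that pair trivially with $\beta-3k\,\omega^2$. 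Hence $\gamma^2$ is not (up to scale) a fixed $4$-form, and your conclusion $Q(\gamma)=c\,|\gamma|^2$ is false. Two quick sanity checks from the paper itself: equation (\ref{eqn:aspa}) says the wedge map sends $\Sym^2 A_+$ \emph{onto} $\bigtriangleup^{2,2}_{prim}\oplus\la \Re(\theta)+8\omega^2\ra$, which is impossible if $\gamma\mapsto\gamma^2$ had one-dimensional image; and in Subsection \ref{subsec:rot-prod} the class $\beta=dz_{12\bar1\bar2}$ satisfies $\beta\wedge c_1\wedge\bar c_1\neq 0$ but $\beta\wedge c_3\wedge\bar c_3=0$, so the pairing genuinely depends on the direction of $\gamma$ (as it must, since in Theorem \ref{thm:bogomolov} and the examples the extremal $c$ is a nontrivial datum). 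The fallback argument fares no better: $\SU(4)\cong\Spin(6)$ acts transitively on spheres in $A_+$, but $\beta$ is not $\SU(4)$-invariant, so $Q$ is not invariant and transitivity gives no constraint on its values.

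The deeper problem is that your argument never uses the hypothesis that $\beta$ is an $\omega$-HYM class, and the statement is simply false without it: since $\Sym^2 A_+$ surjects onto $\bigtriangleup^{2,2}_{prim}$ plus a line, a general $(2,2)$-class produces an indefinite quadratic form on $A_+$. The paper's proof is of a completely different nature. It takes an actual $\Spin(7)$-instanton representing $\beta$, regards the zero locus $\cQ_\omega\subset A_+$ of the quadratic form as the set of admissible $\Spin$-rotations, and observes that $\gamma=0$ (i.e.\ $\omega$ itself) is always a singular point of this quadric. If $\gamma\neq 0$ lies on $\cQ_\omega$, one may rotate to $\omega'=2(\omega+\gamma)/|\omega+\gamma|$, for which $\beta$ is again traceless HYM (Proposition \ref{prop:rotation}); by the rotate-back symmetry of equation (\ref{eqn:main}) worked out in Remark \ref{rem:rotate-back}, the rotated quadric defines the same set $\widehat\cQ_\omega=\widehat\cQ_{\omega'}$ of complex structures, so $\omega'$ is also a singular point. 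Thus every point of $\cQ_\omega$ is singular, forcing $\cQ_\omega$ to be a linear subspace, which is equivalent to the form being semidefinite. Any correct proof has to exploit the HYM hypothesis in some such way; a purely pointwise linear-algebra or representation-theoretic argument cannot work.
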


\begin{proof}
 Take $E\to X$ a bundle with $\beta=\beta(E)$ and $A\in \cA_E$ a $\omega$-HYM connection.
Then $A$ is a $\Spin(7)$-instanton. Consider the $\Spin(7)$-rotation equation (this is
Equation (\ref{eqn:main}))
 \begin{equation}\label{eqn:constraint}
 (\beta- 3k\, \omega^2) \wedge \gamma\wedge\gamma =0,
 \end{equation}
for $\gamma\in A_+$, 
and let $\cQ_\omega \subset A_+$ be the space of solutions to (\ref{eqn:constraint}).
This is a (real) quadric in $A_+ \cong \RR^6$, where the point $\gamma=0$
(corresponding to $\omega$) is a singular point. 
The set of solutions to (\ref{eqn:constraint}) is reduced to the origin 
if and only if the quadratic form is definite (positive or negative). 
Let $\widehat\cQ_\omega$ be the closure of 
 $$
 \left\{ 2 \frac{\omega+\gamma}{|\omega+\gamma|} \,  | \, \gamma \in \cQ_\omega \right\} 
  \subset S(\bigwedge\nolimits_7^2).
 $$

Suppose that $\gamma \neq 0$ is a solution to (\ref{eqn:constraint}). Then 
$\omega'=2(\omega+\gamma)/|\omega+\gamma|$ 
gives another complex structure for $X$ for which $\beta$ is traceless $\omega'$-HYM,
by Proposition \ref{prop:rotation}. The set
of solutions to the $\Spin(7)$-rotation equation (cf.\ Remark \ref{rem:rotate-back})
 $$ 
 (\beta- 3k'\, (\omega')^2) \wedge \gamma\wedge\gamma =0,
 $$
for $\gamma\in A_+'$ should give the same set of complex structures as (\ref{eqn:constraint}), 
that is $\widehat\cQ_\omega=\widehat\cQ_{\omega'}$. 
In particular $\omega'$ should be a singular point of this set. The conclusion is that
all points in $\cQ_\omega$ are singular, i.e., that $\cQ_\omega$ is a 
linear subspace. Equivalently, the defining equation  
(\ref{eqn:constraint}) is semi-definite. 
\end{proof}

\medskip 

It is of interest to determine the set of (rational) classes 
which are represented as $\beta(E)$ for
polystable bundles $E$. We define
 $$
 \cH_\omega = \{\beta \in H^{2,2}(X)\, | \, \beta \text{ is asymptotically $\omega$-HYM}\}.
 $$

By Lemma \ref{lem:asymp}, if $\pi: X\to X'$ is an isogeny between $4$-tori, and $\omega=\pi^*\omega'$,
then $\cH_{\omega}=\pi^* \cH_{\omega'}$.

\begin{lemma}
 For any K\"ahler torus $X$, the set $\cH_{\omega}$ is a convex cone.
\end{lemma}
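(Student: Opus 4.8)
The plan is to verify directly the two properties defining a convex cone: stability under multiplication by positive rationals, and stability under addition (the origin lies in $\cH_\omega$, realised by the trivial bundle $\cO_X$ with its trivial connection, which is traceless $\omega$-HYM). I will use two elementary facts repeatedly: $\beta(E^{\oplus a}) = a\,\beta(E)$, immediate from the Whitney sum formula; and for a direct-sum connection $A_1 \oplus A_2$ on $E_1 \oplus E_2$ one has $\tr F_{A_1 \oplus A_2} = \tr F_{A_1} + \tr F_{A_2}$, while $F^o_{A_1 \oplus A_2}$ equals the block sum $F^o_{A_1} \oplus F^o_{A_2}$ precisely when the mean curvatures agree, $\frac{1}{\rk E_1}\tr F_{A_1} = \frac{1}{\rk E_2}\tr F_{A_2}$ --- in which case $A_1 \oplus A_2$ is traceless $\omega$-HYM whenever $A_1$ and $A_2$ are.

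For scaling, I would argue as follows. Let $\beta \in \cH_\omega$, so $N\beta = \beta(E)$ for some $E$ carrying a traceless $\omega$-HYM connection $A$ and some $N \gg 0$. For a positive integer $m$, the connection $A^{\oplus m}$ on $E^{\oplus m}$ is again traceless $\omega$-HYM, since $\tr F_{A^{\oplus m}} = m\,\tr F_A$ is harmonic and $F^o_{A^{\oplus m}} = \bigoplus_{i=1}^m F^o_A \in \Omega^{1,1}_{prim}(\su_{E^{\oplus m}})$; moreover $\beta(E^{\oplus m}) = mN\beta$. So $mN\beta$ is traceless $\omega$-HYM for every $m \geq 1$, and for $q = a/b \in \QQ_{>0}$ we get $(bN)\,q\beta = aN\beta = \beta(E^{\oplus a})$ traceless $\omega$-HYM, i.e.\ $q\beta \in \cH_\omega$.

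For addition, let $\beta, \beta' \in \cH_\omega$, with $\beta(E) = N\beta$ and $\beta(E') = N'\beta'$ for bundles $E, E'$ carrying traceless $\omega$-HYM connections. The naive candidate $E \oplus E'$ fails, because $\tr F_A$ and $\tr F_{A'}$ are merely harmonic $2$-forms whose means need not coincide; I expect this mismatch of the trace (first Chern class) parts to be the one real obstacle. The fix is to pass first to the trace-free endomorphism bundle: by the argument in the proof of Lemma \ref{lem:9}, $\su_E$ carries an induced traceless $\omega$-HYM connection, has $c_1 = 0$, and $\beta(\su_E) = 2\rk(E)\,\beta(E)$, and similarly for $E'$. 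Since $c_1(\su_E) = 0$, its harmonic trace curvature represents the zero class and hence vanishes identically, so the $F^o$ of its induced connection is just its curvature, a primitive $(1,1)$-form --- and likewise for $\su_{E'}$. Setting $M = 2\rk(E)N$ and $M' = 2\rk(E')N'$, I then form $(\su_E)^{\oplus M'} \oplus (\su_{E'})^{\oplus M}$, all of whose summands have vanishing $c_1$; its direct-sum connection has vanishing harmonic trace curvature and $F^o$ of block form in $\Omega^{1,1}_{prim}$, hence is traceless $\omega$-HYM, while its $\beta$ is the plain sum $M'\beta(\su_E) + M\beta(\su_{E'}) = MM'(\beta + \beta')$. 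Thus $MM'(\beta+\beta')$ is traceless $\omega$-HYM, so $\beta + \beta' \in \cH_\omega$, and $\cH_\omega$ is a convex cone.
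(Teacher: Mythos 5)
Your proof is correct, and it follows a more careful route than the paper's one-line argument. The paper simply chooses polystable representatives $E_1,E_2$ \emph{with trivial determinant} and $\beta(E_i)=\beta_i$, so that $E_1\oplus E_2$ is again polystable and $\beta(E_1\oplus E_2)=\beta_1+\beta_2$; it does not address why such representatives exist, and that is exactly the point you isolate: a mismatch of the mean trace curvatures (equivalently of $c_1/\rk$) spoils both the additivity of $\beta$ under direct sums and the traceless HYM property of the direct-sum connection. Your fix --- replacing $E$ by $\su_E$ with its induced connection as in Lemma \ref{lem:9}, so that all representatives have $c_1=0$ and identically vanishing trace curvature, and then taking $M'$ and $M$ copies to reconcile the scalar factors $M=2\rk(E)N$, $M'=2\rk(E')N'$, which is legitimate precisely because $\cH_\omega$ is defined asymptotically --- supplies the justification the paper omits, at the harmless cost of the larger multiplier $MM'$; it also stays entirely at the level of connections, which matches the connection-theoretic definition of a traceless $\omega$-HYM class on a general (possibly non-projective) K\"ahler torus. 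You additionally spell out the scaling half of the cone property, which the paper leaves implicit. Two cosmetic remarks: the induced connection on $\su_E$ has identically zero trace by construction, so the detour through harmonic representatives of $c_1=0$ is unnecessary; and the degenerate case $\rk E=1$, where $\su_E$ has rank zero, only arises when $\beta=0$, so it causes no trouble.
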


\begin{proof}
It is enough to see that if $\beta_1,\beta_2$ are $\omega$-HYM, then $\beta_1+\beta_2$ is $\omega$-HYM.
Just take bundles $E_1,E_2\to X$ which are polystable, and with trivial determinant,
with $\beta(E_i)=\beta_i$, $i=1,2$. Then $E_1\oplus E_2$ is polystable and $\beta(E)=\beta_1+\beta_2$.
\end{proof}

Suppose from now on
that $(X,\omega)$ is an abelian variety.

\begin{proposition} \label{prop:bogomolov}
 If $X$ is an abelian variety and $\beta \in  \cH_\omega$ then
 $$
 (\beta-3k\, \omega^2)\wedge \gamma\wedge\gamma\leq 0,
 $$ 
 for all $\gamma\in A_+$, where $k$ is given by (\ref{eqn:k}). 
 If there is equality, then $\beta$ is 
 $\Spin$-rotable via $\gamma$.
\end{proposition}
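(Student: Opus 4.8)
The plan is to combine the semi-definiteness of the relevant quadratic form — already established in Proposition \ref{prop:17} — with a trace computation that selects the correct sign, controlling that trace by the classical Bogomolov inequality. First I would reduce to a genuinely $\omega$-HYM class: since $\beta\in\cH_\omega$ there is $N\gg0$ with $N\beta$ traceless $\omega$-HYM, realized by a bundle $E\to X$ with $\beta(E)=N\beta$ and a traceless $\omega$-HYM connection $A$ which, by Proposition \ref{prop:HYM=Spin7}, is a $\Spin(7)$-instanton and has $N\beta$ of type $(2,2)$. The assignment $Q\colon\gamma\mapsto(\beta-3k\,\omega^2)\wedge\gamma^2$ on $A_+$ is linear in $\beta$ (the coefficient $k$ of (\ref{eqn:k}) scales linearly with $\beta$), so it equals $\tfrac1N$ times the corresponding form for $N\beta$, and it suffices to argue for $N\beta$; Proposition \ref{prop:17} then tells us that $Q$ is semi-definite on $A_+\cong\RR^6$, i.e.\ either $\ge0$ on all of $A_+$ or $\le0$ on all of $A_+$.

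The crux is to compute the trace of $Q$ with respect to the $\SU(4)$-invariant metric on $A_+$. For an orthonormal basis $\gamma_1,\dots,\gamma_6$ of $A_+$ the $4$-form $\sum_j\gamma_j\wedge\gamma_j$ is $\SU(4)$-invariant, hence — by the isomorphism $\Sym^2 A_+\cong\Delta^{2,2}_{prim}\oplus\la\Re(\theta)+8\omega^2\ra$ of Remark \ref{rem:aspa} and the fact that $\Delta^{2,2}_{prim}$ is a nontrivial irreducible — it is a multiple $c_0\bigl(\Re(\theta)+8\omega^2\bigr)$; pairing with $\omega^2$ and using $\gamma_j\wedge\gamma_j\wedge\omega^2=2\,\vol$, $\Re(\theta)\wedge\omega^2=0$ and $\omega^4=24\,\vol$ shows $c_0=\tfrac1{16}>0$. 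Since $\beta$ and $\omega^2$ are of type $(2,2)$ they wedge trivially against $\Re(\theta)$, so
\[
 \operatorname{tr}(Q)=\sum_{j=1}^6 Q(\gamma_j)=8c_0\,(\beta-3k\,\omega^2)\wedge\omega^2=-24\,k\,\vol ,
\]
using $\beta\wedge\omega^2=24k\,\vol$ and $\omega^4=24\,\vol$. By the Bogomolov inequality (\ref{eqn:bogo}) applied to $E$ we have $k\ge0$, so $\operatorname{tr}(Q)\le0$. A semi-definite quadratic form with non-positive trace must be negative semi-definite: if it were positive semi-definite its trace would be $\ge0$, forcing the trace to vanish and hence the form to be $0$, which is also $\le0$. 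This yields $(\beta-3k\,\omega^2)\wedge\gamma\wedge\gamma\le0$ for every $\gamma\in A_+$. (One could also reach this via the convex cone $\cH_\omega$: by Proposition \ref{prop:Mistretta}, $\omega^2\in\cH_\omega$ and its associated form is negative definite, so $Q$ cannot be positive semi-definite without contradicting semi-definiteness of the forms attached to $\beta+t\,\omega^2$ for $t$ large; but the trace argument is more direct.)

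For the equality clause, suppose $(\beta-3k\,\omega^2)\wedge\gamma\wedge\gamma=0$ for some $\gamma=\tfrac12(c+\bar c)\in A_+$. Then the same vanishing holds with $N\beta$ in place of $\beta$, and unwinding this top-degree identity (the $(4,0)$ and $(0,4)$ parts of $\gamma\wedge\gamma$ drop out against the $(2,2)$-class $N\beta$, and $\tfrac12\omega^2\wedge c\wedge\bar c=|c|^2\vol$) reproduces exactly the second bullet of Proposition \ref{prop:rotation} for the bundle $E$ and connection $A$ above. Proposition \ref{prop:rotation} then gives that $A$ is traceless $\omega'$-HYM for $\omega'=2(\omega+\gamma)/|\omega+\gamma|$; thus $N\beta$, and hence $\beta$, is asymptotically traceless $\omega'$-HYM, i.e.\ $\beta$ is $\Spin$-rotable via $\gamma$.

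The step I expect to be the main obstacle is the trace computation: one has to identify $\sum_j\gamma_j\wedge\gamma_j$ correctly through the $\SU(4)$-representation theory of Section \ref{sec:spin7} and get the normalization and signs right, so that $\operatorname{tr}(Q)$ comes out a \emph{negative} multiple of $k$ — this is precisely what forces the semi-definite form $Q$ onto the negative side. Everything else is bookkeeping on top of Propositions \ref{prop:17}, \ref{prop:rotation}, \ref{prop:HYM=Spin7} and the classical Bogomolov inequality.
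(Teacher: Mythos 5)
Your proof is correct in substance, but it pins down the sign of the quadratic form by a genuinely different mechanism than the paper. Both arguments rest on the same two pillars for everything except the sign: semi-definiteness of $Q(\gamma)=(\beta-3k\,\omega^2)\wedge\gamma^2$ comes from Proposition \ref{prop:17} (the paper uses it implicitly for every class of the cone), and the equality clause is read off from Proposition \ref{prop:rotation} exactly as you do (the paper does not even spell this out). Where you diverge is the decisive step: the paper uses convexity of $\cH_\omega$ together with Proposition \ref{prop:Mistretta} (hence projectivity of $X$) to place the reference class $\omega^2$ in $\cH_\omega$, whose form $\Phi_{\omega^2}=-2\,\omega^2$ is negative definite, and then argues all forms in the cone carry the same sign; you instead compute the trace of $Q$ on $A_+$ and use $k\ge 0$. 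Your trace value is right: only the $(2,2)$-component of $\sum_j\gamma_j\wedge\gamma_j$ matters against the $(2,2)$-class $\beta-3k\,\omega^2$, by invariance it is a multiple of $\omega^2$, and the calibration $\gamma_j\wedge\gamma_j\wedge\omega^2=2\,\vol$ gives that multiple as $\tfrac12\omega^2$, whence $\operatorname{tr}(Q)=-24k\,\vol$. (Incidentally, a direct computation with the basis of Remark \ref{rem:aspa} gives $\sum_j\gamma_j\wedge\gamma_j$ proportional to $3\Re(\theta)+\omega^2$ rather than $\Re(\theta)+8\omega^2$, so the coefficient $8$ there looks mis-normalized; this does not affect you, since the $\Re(\theta)$-part dies against $(2,2)$-classes and your constant is fixed by the $\omega^2$-pairing.) What your route buys is notable: it never uses that $X$ is projective, only the Bogomolov bound $k\ge 0$, so combined with Proposition \ref{prop:17} it would also settle the K\"ahler case that the paper leaves as a conjecture after Remark \ref{ref:conjecture-semi-def-neg}; what the paper's route buys is that it stays entirely inside the cone formalism and needs no representation-theoretic normalization. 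One point you should tighten: you invoke (\ref{eqn:bogo}) for the bundle $E$ carrying the traceless $\omega$-HYM connection, but (\ref{eqn:bogo}) is stated for $\omega$-polystable \emph{holomorphic} bundles, and a traceless $\omega$-HYM bundle need not be holomorphic (its $c_1$ need not be of type $(1,1)$). This is easily repaired: either pass to $\su_E$, which is genuinely $\omega$-HYM and polystable with $\beta(\su_E)=2rN\beta$ (Lemma \ref{lem:9}), or use the instanton energy identity $8\pi^2 N\beta\cup[\Omega]=\|\pi_{21}F_A^o\|^2\ge 0$ together with $\beta\cup[\Omega]=\tfrac12\,\beta\cup[\omega]^2$ for $(2,2)$-classes, which gives $k\ge 0$ directly.
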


\begin{proof}
We have to see that (\ref{eqn:constraint}) is \emph{negative} semi-definite.
For $\beta\in \cH_\omega$, consider the quadratic form on $A_+$,
 $$
  \Phi_\beta(\gamma,\gamma)=(\beta- 3k_\beta \, \omega^2) \wedge \gamma\wedge\gamma\, ,
 $$
where $k_\beta$ is given, as usual, by $\beta\wedge \omega^2=k_\beta\, \omega^4$.

Let $\beta,\beta'\in \cH_\omega$. 
Then $t\beta+(1-t)\beta'\in \cH_\omega$ for $t\in[0,1]\cap \QQ$.
The form 
 $$
 \Phi_{t\beta+(1-t)\beta'}=t\Phi_\beta+(1-t)\Phi_{\beta'}
 $$
is semidefinite for all $t\in [0,1]\cap \QQ$. This implies that either $\Phi_\beta$ and 
$\Phi_{\beta'}$ are semidefinite of the same type (positive or negative) or
that there is some $\lambda\in \RR$ with $\Phi_\beta=\lambda \Phi_{\beta'}$.
By (\ref{eqn:aspa}), the set $\gamma\wedge\gamma$ expands $\bigwedge^{2,2}_{prim}$.
So in the second case it must be $\beta=\lambda \beta'$. 
Therefore $\lambda$ must be rational and positive,
and $\Phi_\beta$, $\Phi_{\beta'}$ are semidefinite of the same type.

Now we use that $X$ is an abelian variety. As $X$ is a projective complex manifold,
we apply Proposition \ref{prop:Mistretta} to $\beta_0=0$. Then there exists $k\gg 0$ such
that there is an $\omega$-HYM bundle $E\to X$ with $\beta(E)=N\, \omega^2$. This
means that $\omega^2$ is asymptotically $\omega$-HYM. Clearly 
$\Phi_{\omega^2}=-2\omega^2 <0$, therefore all $\Phi_\beta$ are negative semidefinite.
\end{proof}

\begin{remark} \label{ref:conjecture-semi-def-neg}
Proposition \ref{prop:bogomolov} also holds if 
$X$ is a complex torus with $H_{prim}^{1,1}(X)\cap H^2(X,\QQ)\neq 0$. 
Take a non-zero $\alpha\in H^{1,1}_{prim}(X)\cap H^2(X,\QQ)$, and let 
$\cL$ be a line bundle with $c_1(\cL)=\alpha$. Then $E=\cL \oplus \cL^{-1}$
is a polystable rank $2$ bundle with $\beta=-\alpha^2$. By Lemma \ref{lem:alpha2},
$\Phi_{-\alpha^2}$ is negative semidefinite.
\end{remark}

We conjecture that the statement in Proposition \ref{prop:bogomolov} also holds 
for any K\"ahler (non-projective) complex $4$-tori. That is, if $(X,\omega)$ is
a K\"ahler $4$-torus, and $\beta\in \cH_\omega$, then $(\beta-3 k\, \omega^2)\wedge \gamma^2\leq 0$,
for any $\gamma\in A_+$, and $k=k_\beta$. Equivalently, in 
Proposition \ref{prop:17} it is the first line that holds.

\begin{theorem} \label{thm:bogomolov}
Let $X$ be an abelian variety. 
Consider $\beta_0\in \bigwedge^{2,2}_{prim}$. 
Let 
 $$
 k_m=k_m(\beta_0)=\frac14 \min\{ \frac{\beta_0\wedge c \wedge \bar{c}}{\vol} 
 \, | \, c \in \bigwedge\nolimits^{2,0}, \, |c |=1\},
$$
and 
 $$
 \cR =\{ c \in \bigwedge\nolimits^{2,0}\, | \, 
\frac{\beta_0\wedge c \wedge \bar{c}}{\vol} = k_m, \, |c \wedge c |=1, \, |c |=1\}.
 $$

 Then for any $\beta=\beta_0+ \beta_1\wedge \omega+ k\, \omega^2$ which is
 a traceless $\omega$-HYM class, we have $k\geq k_m$. And $k=k_m$ if and only if
$\beta$ is $\Spin$-rotable (via any $\gamma=\frac\lambda2 (c +\bar{c})$, $c\in \cR$, $\lambda\in \RR_{>0}$).
\end{theorem}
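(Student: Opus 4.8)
The plan is to deduce Theorem~\ref{thm:bogomolov} from Proposition~\ref{prop:bogomolov} by extracting the pointwise (cohomological) content of the negative semidefiniteness of $\Phi_\beta$. First I would record the bookkeeping: writing $\beta=\beta_0+\beta_1\wedge\omega+k\,\omega^2$ with $\beta_0$ primitive $(2,2)$ and $\beta_1$ primitive $(1,1)$, and writing $\gamma=\frac12(c+\bar c)$ with $c\in\bigwedge^{2,0}$, $L(c)=\bar c$, I would expand $(\beta-3k\,\omega^2)\wedge\gamma^2$. The term $\beta_1\wedge\omega\wedge\gamma^2$ vanishes because $\gamma^2\in\bigwedge^{2,2}_{prim}\oplus\langle\Re(\theta)+8\omega^2\rangle$ (Remark~\ref{rem:aspa}) while $\beta_1\wedge\omega$ is of ``$\omega$-degree one,'' and $\omega^2\wedge\gamma^2=\frac12|c|^2\omega^2\wedge c\wedge\bar c\cdot(\text{const})$ is computed as in Remark~\ref{rem:rotate-back}; the upshot (this is exactly the rewriting already done to pass from Theorem~\ref{thm:rotation} to \eqref{eqn:main}) is that
\[
(\beta-3k\,\omega^2)\wedge\gamma^2 = \tfrac14\Bigl(\beta_0\wedge c\wedge\bar c - k\,|c|^2\,\omega^2\wedge c\wedge\bar c/\vol\cdot\vol\Bigr)\cdot(\text{positive const}),
\]
so that, after normalizing $|c|=1$ and $|c\wedge c|=1$ (equivalently $L(c)=\bar c$, which is what membership of $\gamma$ in $A_+$ forces), the sign of $(\beta-3k\,\omega^2)\wedge\gamma^2$ is the sign of $\dfrac{\beta_0\wedge c\wedge\bar c}{\vol}-4k$.

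Granting that computation, Proposition~\ref{prop:bogomolov} says this quantity is $\le 0$ for every $\gamma\in A_+$, i.e.\ for every unit $c$ with $L(c)=\bar c$; hence $4k\ge \dfrac{\beta_0\wedge c\wedge\bar c}{\vol}$ for all such $c$. Taking the maximum over this set gives $4k\ge 4k_m$ after I check that the minimum in the definition of $k_m$ — which ranges over \emph{all} unit $c\in\bigwedge^{2,0}$ — is attained among those $c$ with $L(c)=\bar c$, or more precisely that the relevant extremum is unchanged. Here I would use Remark~\ref{rem:Spin6-transitive}: $\SU(4)=\Spin(6)$ acts transitively on unit vectors of $A_+$, and also the family $\{L(c)=e^{i\vartheta}\bar c\}$ obtained by rotating $\theta\mapsto e^{i\vartheta}\theta$ sweeps out all unit $(2,0)$-forms with $|c\wedge c|=1$; since $\beta_0$ is real, $\beta_0\wedge c\wedge\bar c$ depends only on $|c\wedge c|$ in the appropriate sense, so restricting to $L(c)=\bar c$ costs nothing. (Sorting out whether it is the min or the max that appears is a matter of a sign: the statement of the theorem defines $k_m$ via $\min$, so I must be careful that $\Phi_\beta\le 0$ translates to $k\ge k_m$ with $k_m$ the appropriate extreme value — I would double-check this against the displayed ``Theorem'' in the introduction, which uses $\max$, and reconcile the conventions.)

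For the equality case: if $k=k_m$ then there is a unit $c\in\cR$ with $\dfrac{\beta_0\wedge c\wedge\bar c}{\vol}=4k$, i.e.\ $\Phi_\beta(\gamma,\gamma)=0$ for $\gamma=\frac12(c+\bar c)$. By the ``if there is equality'' clause of Proposition~\ref{prop:bogomolov}, $\beta$ is $\Spin$-rotable via that $\gamma$. To get the full statement — rotability via $\gamma=\frac{\lambda}{2}(c+\bar c)$ for any $\lambda\in\RR_{>0}$, $c\in\cR$ — I would note that $\Phi_\beta$ is a quadratic form, so $\Phi_\beta(\lambda\gamma,\lambda\gamma)=\lambda^2\Phi_\beta(\gamma,\gamma)=0$, and that rescaling $\gamma$ only rescales $c$, leaving the rotated structure $\omega'=2(\omega+\lambda\gamma)/|\omega+\lambda\gamma|$ a legitimate point of $S(\bigwedge^2_7)$; Proposition~\ref{prop:rotation} then applies verbatim. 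Conversely, if $\beta$ is $\Spin$-rotable via some $\gamma=\frac12(c+\bar c)$, Proposition~\ref{prop:rotation} gives $\Phi_\beta(\gamma,\gamma)=0$, which by the inequality just proved forces $\dfrac{\beta_0\wedge c\wedge\bar c}{\vol}=4k$ and (using the normalization built into $\cR$) $k=k_m$ with $c\in\cR$.

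The main obstacle I expect is the bookkeeping in the first paragraph: correctly tracking the normalization constants ($|\theta|=4$, $|\omega|=2$, $\theta\wedge\bar\theta=16\vol$, $\omega^4=24\vol$, and the $\frac14$ in the definition of $L$) so that the sign and the exact constant $k_m=\frac14\,(\text{extremum})$ come out right, and pinning down whether the extremum is a min or a max given the two (apparently inconsistent) conventions in the introduction versus the body. None of this is deep, but it is the place where an error would be easy; the conceptual work is entirely carried by Proposition~\ref{prop:bogomolov} and the transitivity in Remark~\ref{rem:Spin6-transitive}.
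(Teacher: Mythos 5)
There is a genuine gap, and it sits exactly where you flag uncertainty: the passage from the inequality on $A_+$ to the extremum over \emph{all} unit $c\in\bigwedge^{2,0}$ that defines $k_m$. Proposition \ref{prop:bogomolov} gives $(\beta_0-2k\,\omega^2)\wedge\gamma^2\leq 0$ only for $\gamma=\frac12(c+\bar c)$ with $L(c)=\bar c$, i.e.\ $c\wedge c=\frac14|c|^2\theta$; allowing phase rotations $\theta\mapsto e^{i\vartheta}\theta$ enlarges this only to those $c$ with $|c\wedge c|=|c|^2$. Your justification for why this costs nothing is not correct: $\SU(4)\cong\Spin(6)$ acts transitively on the unit sphere of $A_+$, but \emph{not} on unit $(2,0)$-forms — the ratio $|c\wedge c|/|c|^2$ is an invariant separating orbits (e.g.\ the decomposable form $c=dz_{12}$ has $c\wedge c=0$ and lies in no $A_+$ for any choice of $\theta$) — and $\beta_0\wedge c\wedge\bar c$ is a genuine Hermitian form in $c$, not a function of $|c\wedge c|$ (compare $c=dz_{12}$ and $c=dz_{13}$ for a typical primitive $\beta_0$). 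Since the relevant extremum of this Hermitian form over the unit sphere may well be attained at a $c$ with $|c\wedge c|<|c|^2$, your argument does not yield the inequality against $k_m$ as defined (in its intended ``max'' reading, which you correctly suspect is the right convention; with the literal ``min'' the inequality is vacuously weaker).

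The paper closes this gap by a short linear-algebra step you are missing. First, the inequality transfers from $A_+$ to $A_-$: for $\gamma'=\frac12(ic-i\bar c)\in A_-$ one has $(\beta_0-2k\,\omega^2)\wedge(\gamma')^2=(\beta_0-2k\,\omega^2)\wedge\gamma^2$, because only the cross term $c\wedge\bar c$ survives wedging with a $(2,2)$-form. Second, for a general $\gamma=\gamma_++\gamma_-\in\bigtriangleup^{2,0}$ the mixed terms vanish: $\beta_0\wedge\gamma_+\wedge\gamma_-=0$ by (\ref{eqn:aspa2}) (the image of $A_+\otimes A_-$ lies in $\bigtriangleup^{1,1}_{prim}\omega\oplus\la\Im(\theta)\ra$, which pairs to zero with a primitive $(2,2)$-class), and $\omega^2\wedge\gamma_+\wedge\gamma_-=2\la\gamma_+,\gamma_-\ra=0$. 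Hence the quadratic form is block-diagonal with respect to $\bigtriangleup^{2,0}=A_+\oplus A_-$ and the inequality $(\beta_0-2k\,\omega^2)\wedge c\wedge\bar c\leq 0$ holds for \emph{every} $c\in\bigwedge^{2,0}$, which is what the definition of $k_m$ requires. The remainder of your outline (the normalization giving the factor $\frac14$, the vanishing of $\beta_1\wedge\omega\wedge\gamma^2$ via (\ref{eqn:aspa}), the scaling in $\lambda$, and the equality case via Proposition \ref{prop:rotation} together with the constraint $|c\wedge c|=|c|^2$ for realizability in some $A_+$) does agree with the paper's argument.
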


\begin{proof}
Let $\beta=\beta_0+ \beta_1\wedge \omega+ k\, \omega^2$ be 
an $\omega$-HYM class. By (\ref{eqn:aspa}), 
$\beta_1\wedge \omega \wedge \gamma\wedge\gamma=0$, for any $\gamma\in A_+$. 
By Proposition \ref{prop:bogomolov}, $(\beta-3k\, \omega^2)\wedge \gamma\wedge\gamma\leq 0$.
Therefore $(\beta_0- 2k \, \omega^2)\wedge \gamma\wedge\gamma\leq 0$, for $\gamma\in A_+$. 
Write $\gamma=\frac12(c +\bar{c})$. Then $\gamma'=\frac12 (ic -i\bar{c})\in A_-$ 
and $(\beta_0- 2k \, \omega^2)\wedge (\gamma')^2 =
(\beta_0- 2k \, \omega^2)\wedge\gamma^2\leq 0$, so the inequality holds on $A_-$.
Now take a general $\gamma\in \triangle^{2,0}$, $\gamma=\gamma_+ + \gamma_-$, with
$\gamma_\pm \in A_\pm$. Then $\beta_0\wedge \gamma_+\wedge\gamma_- =0$ by
(\ref{eqn:aspa2}) and $\omega^2\wedge \gamma_+\wedge\gamma_-= 2 \la \gamma_+,\gamma_-\ra=0$. So  
 $$
 (\beta_0- 2k \, \omega^2)\wedge \gamma^2=(\beta_0- 2k \, \omega^2)\wedge\gamma_+^2
 +(\beta_0- 2k \, \omega^2)\wedge \gamma_-^2\leq 0. 
 $$

Take $\gamma \in \triangle^{2,0}$ and write $\gamma =\frac12(c +\bar{c})$,
$c \in \bigwedge^{2,0}$. Then 
$(\beta_0- 2k \, \omega^2)\wedge c \wedge \bar{c}\leq 0$,
or equivalently
 $$
 4k_m\, \vol \leq \beta_0\wedge c \wedge \bar{c} \leq 
 2k \, \omega^2\wedge c \wedge \bar{c} = 4k\, |c |^2\, \vol= 4 k\, \vol.
 $$ 

If $k=k_m$, then $\beta$ is $\Spin$-rotable for any $\gamma\in A_+$ satisfying equality. But
$\gamma= \frac12 (c +\bar{c})$ is in $A_+$ when $c \wedge c = |c |^2 \frac\theta4$.
This happens for a suitable $\theta$ if and only if $|c \wedge c |= |c |^2$.
\end{proof}

Let $\cD^2\subset H^{2,2}(X)$ be the set of Hodge $(2,2)$-classes represented by algebraic cycles
(all Hodge $(2,2)$-classes if we assume the Hodge Conjecture). Decompose orthogonally
$\cD^2=\cD^2_o \oplus \QQ \omega^2$. Then there is a convex radial function $f:\cD^2_o \to [0,\infty)$ 
(i.e., $f(t \beta)=t \cdot f(\beta)$ for $t>0$)
such that the closure of  $\cH_\omega$ is
 $$
 \text{cl}(\cH_\omega) = \{ \beta + k \,\omega^2 \, |\, \beta \in \cD^2_o, \, k\geq f(\omega)\}.
 $$
This is a consequence of Proposition \ref{prop:Mistretta}:
for any $\beta \in \cD^2_0$, there is
some large $k\gg 0$ for which $\beta+k\,\omega^2$ is $\omega$-HYM. 

The usual Bogomolov inequality says that $f\geq 0$. Theorem \ref{thm:bogomolov}
says that 
 $$
f(\beta_0+\beta_1\wedge \omega)\geq \max\{k_m(\beta_0),0\}.
 $$
Note that $k_m(\beta_0)$ may be negative, in which case there is no improvement.
However, if $k_m(\beta_0)<0$ then $k_m(-\beta_0)>0$, so $f\neq 0$.

In \cite{Voisin}, C.\ Voisin finds K\"ahler non-projective $4$-tori for which the Hodge 
Conjecture fails in the following extended version: $X$ has a Hodge $(2,2)$-class not
represented by Chern classes of sheaves. Such examples do not have stable bundles
because all Hodge $(2,2)$-classes have $k=0$. It would be interesting to
know if the condition $k\geq k_m$ is sufficient for a Hodge $(2,2)$-class 
to be represented as $\beta(E)$ of a polystable bundle, when $k_m>0$.

\begin{remark}
Using the First Variation of Equation (\ref{eqn:main}), we can prove further: if $\beta$
is $\omega$-HYM and $\Spin$-rotable via $\gamma\in A_+$, then 
$(\beta_0 - 2k\, \omega^2)\wedge \gamma=0$ and $\beta_1\wedge \omega= \gamma\wedge\gamma'$ for
some $\gamma'\in A_-$.
\end{remark}

\medskip

We can rephrase Theorem \ref{thm:bogomolov} also in terms of 
$\Spin(7)$-geometry.

\begin{theorem}
 Suppose that $\beta$ is traceless $\omega$-HYM. Then
 $$
 k=\frac{1}{24} \max \{\frac{\beta \wedge (\omega')^2}{\vol} \, |\,  \omega'\in S(\bigwedge\nolimits^2_7)\},
$$
and those $\omega'$ where this minimum is achieved are exactly the K\"ahler forms such
that $\beta$ is traceless $\omega'$-HYM.
\end{theorem}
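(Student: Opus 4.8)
The plan is to express $\beta\wedge(\omega')^2$ through the quantity $\beta\wedge\Omega$, which depends only on the fixed $\Spin(7)$-structure, not on the $\SU(4)$-refinement. Fix a bundle $E$ with $\beta=\beta(E)$ and a traceless $\omega$-HYM connection $A\in\cA_E$; by Proposition \ref{prop:HYM=Spin7} it is a $\Spin(7)$-instanton, and $\beta\in\bigwedge^{2,2}$ with respect to $\omega$, so $\beta\wedge\Re(\theta)=0$ and hence $\beta\wedge\Omega=\frac12\,\beta\wedge\omega^2=12k\,\vol$, with $k$ as in (\ref{eqn:k}). For any $\omega'\in S(\bigwedge^2_7)$, Lemma \ref{lem:1} produces a compatible $\SU(4)$-structure $(\omega',\theta')$ with the \emph{same} $\Omega=\frac12(\omega')^2+\Re(\theta')$, which on the flat torus is again Calabi--Yau ($\omega'$ is a constant, hence closed, form and $J'$ is a constant, hence integrable, complex structure). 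Therefore
$$\beta\wedge(\omega')^2 \;=\; 2\,\beta\wedge\Omega-2\,\beta\wedge\Re(\theta') \;=\; 24k\,\vol-2\,\beta\wedge\Re(\theta'),$$
so the entire statement reduces to showing that $\beta\wedge\Re(\theta')\ge 0$ for every $\omega'$, with equality locus exactly the set of $\omega'$ for which $\beta$ is traceless $\omega'$-HYM.

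For the nonnegativity I would decompose $\omega'=a\,\omega+\gamma$ with $\gamma\in A_+$, using the orthogonal splitting $\bigwedge^2_7=\langle\omega\rangle\oplus A_+$; since $|\omega'|=2=|\omega|$ this forces $4a^2+|\gamma|^2=4$. Because $\beta$ is of type $(2,2)$ the cross term $\beta\wedge\omega\wedge\gamma$ vanishes, and a routine computation (using $\beta\wedge\omega^2=24k\,\vol$ and $1-a^2=|\gamma|^2/4$) gives
$$\beta\wedge\Re(\theta') \;=\; 3k\,|\gamma|^2\,\vol-\tfrac12\,\beta\wedge\gamma^2.$$
Now comes the key input, Proposition \ref{prop:bogomolov}: since $X$ is an abelian variety and $\beta\in\cH_\omega$, the quadratic form $\gamma\mapsto(\beta-3k\,\omega^2)\wedge\gamma\wedge\gamma$ is negative semidefinite on $A_+$, i.e. $\beta\wedge\gamma^2\le 3k\,\omega^2\wedge\gamma^2=6k\,|\gamma|^2\,\vol$ (using the standard identity $\omega^2\wedge\gamma^2=2|\gamma|^2\,\vol$ for $\gamma\in A_+$). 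Substituting, $\beta\wedge\Re(\theta')\ge 0$, hence $\beta\wedge(\omega')^2\le 24k\,\vol$, with equality at $\omega'=\omega$. This establishes the displayed formula for $k$.

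For the description of the equality set, note that $\beta\wedge(\omega')^2=24k\,\vol$ holds iff $\beta\wedge\Re(\theta')=0$, i.e. iff $\beta\cup[\Re(\theta')]=0$. Since $A$ is already a $\Spin(7)$-instanton and $\Omega$ is unchanged, the equivalence (a)$\Leftrightarrow$(c) of Proposition \ref{prop:HYM=Spin7}, applied to the $\SU(4)$-structure $(\omega',\theta')$, shows this is equivalent to $A$ being traceless $\omega'$-HYM; in particular $\beta$ is then traceless $\omega'$-HYM, and conversely any traceless $\omega'$-HYM class satisfies $\beta\cup[\Re(\theta')]=0$ by the same proposition, forcing equality. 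The only genuine obstacle is the appeal to Proposition \ref{prop:bogomolov}: this is where projectivity of $X$ truly enters (through Proposition \ref{prop:Mistretta} applied to $\beta_0=0$, together with the sign of $\Phi_{\omega^2}$), and without it one would only know that $\beta\wedge(\omega')^2-24k\,\vol$ has a constant sign on $S(\bigwedge^2_7)$, not that it is $\le 0$. (One should also note that the ``minimum'' in the statement must read ``maximum''.)
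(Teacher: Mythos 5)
Your proof is correct and follows essentially the same route as the paper: both parametrize $\omega'$ by $\gamma\in A_+$, reduce the inequality $\beta\wedge(\omega')^2\le 24k\,\vol$ to the negative semidefiniteness of $(\beta-3k\,\omega^2)\wedge\gamma^2$ from Proposition \ref{prop:bogomolov}, and your reformulation through $\beta\wedge\Omega$ and $\beta\wedge\Re(\theta')$ is an equivalent repackaging of the paper's direct expansion of $(\omega+\gamma)^2$. Your treatment of the equality locus via Proposition \ref{prop:HYM=Spin7}(a)$\Leftrightarrow$(c) is in fact slightly more explicit than the paper's one-line conclusion, and you are right that ``minimum'' in the statement should read ``maximum''.
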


\begin{proof}
Let $\omega'\in S(\bigwedge^2_7)$. Then $\omega'=2\frac{\omega+\gamma}{|\omega+\gamma|}$, where
$\gamma\in A_+$. By Proposition \ref{prop:bogomolov}, $\beta\wedge\gamma^2 \leq 3k\, \omega^2\wedge
\gamma^2 = 6k \, |\gamma|^2$. Then 
 \begin{align*}
  \beta\wedge (\omega')^2 &=  \beta\wedge 4 \frac{(\omega+\gamma)^2}{|\omega+\gamma|^2} 
  = \frac{4}{4+|\gamma|^2} \, \beta\wedge (\omega^2+ 2\omega\wedge \gamma+ \gamma^2) \\ 
   & \leq  \frac{4}{4+|\gamma|^2} ( 24k + 6k  |\gamma|^2 ) \vol = 24 \, k\, \vol.
 \end{align*}
We defined $k'$ by $\beta\wedge (\omega')^2=24 \, k'\vol$. Therefore $k'\leq k$. If we have 
equality, then $\beta$ is $\Spin$-rotable via $\gamma$.
\end{proof}

\section{Examples} \label{sec:examples}

We want to see now some examples in which the new Bogomolov inequality 
is satisfied, and examples where it puts new
constraints for the existence of stable bundles.
Recall that $(X,\omega)$ is an abelian variety, $\beta\in H^4(X,\ZZ)$ is an
asymptotically $\omega$-HYM class, and $\beta\wedge\omega^2=k\, \omega^4$. Then 
  \begin{equation}\label{eqn:bogomolov}
  (\beta- 3k \omega^2 ) \wedge c \wedge \bar c\leq 0,
  \end{equation}
for any $c\in \bigwedge^{2,0}$.

\subsection{Complete intersections}
A basic case is that of degree $4$ classes $\beta$ which are complete intersections, i.e., 
product of divisors (rational degree $2$ classes). That is,  
bundles $E=\cO(D_1) \oplus \ldots \oplus \cO(D_r)$, where $D_1\cdot H=\ldots = D_r\cdot H$,
where $H$ denotes the polarization, and $\beta=\beta(E)$. 
This is the case where $E$ is $H$-polystable and
completely decomposable.

We start with a useful lemma.

\begin{lemma} \label{lem:alpha2}
 Let $\alpha\in \triangle^{1,1}_{prim}$. Then $\beta=-\alpha^2$ satisfies (\ref{eqn:bogomolov}).
\end{lemma}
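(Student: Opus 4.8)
The statement is pointwise on fibres, so the plan is to put $\alpha$ in normal form using the structure group and then carry out the multilinear algebra. Since $\alpha\in\bigtriangleup^{1,1}_{prim}$ is the $(1,1)$-form associated (via $\omega$) to a traceless hermitian endomorphism of $V$, after acting by a suitable $g\in\SU(4)$ — chosen in $\SU(4)$ rather than $\U(4)$ precisely so that $\omega$, $\theta$, $\Omega$, and hence the splitting $\bigtriangleup^{2,0}=A_+\oplus A_-$, are all preserved — I may assume there is a unitary coframe in which $\alpha=\sum_{j=1}^4\lambda_j e_j$, where $e_j=\tfrac i2\,dz_j\wedge d\bar z_j$ are the standard orthonormal Kähler $2$-planes, $\sum_j\lambda_j=0$ (primitivity) and $|\alpha|^2=\sum_j\lambda_j^2$.

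First I would record the Lefschetz data of $\beta=-\alpha^2=\beta_0+\beta_1\wedge\omega+k\,\omega^2$. Since $e_j\wedge e_k\wedge e_{\hat p}\wedge e_{\hat q}=\vol$ whenever $\{\hat p,\hat q\}=\{1,2,3,4\}\setminus\{j,k\}$, one gets $\alpha^2\wedge\omega^2=-2|\alpha|^2\,\vol$, hence $k=\tfrac1{12}|\alpha|^2$ (as $\omega^4=24\,\vol$), so $6k=\tfrac12|\alpha|^2$; and a direct Lefschetz decomposition of $\alpha^2$ gives the primitive $(1,1)$-part $\beta_1=\sum_j\mu_j e_j$ with $\mu_j=\lambda_j^2-\tfrac14|\alpha|^2$ (note $\sum_j\mu_j=0$, as it must be).

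The computational core is one bilinear identity. For an arbitrary $(2,0)$-form $c=\sum_{p<q}c_{pq}\,dz_p\wedge dz_q$, with $\{\hat p,\hat q\}$ the complement of $\{p,q\}$ in $\{1,2,3,4\}$: in degree $8$ only the diagonal terms $|c_{pq}|^2$ survive, and $e_{\hat p}\wedge e_{\hat q}\wedge dz_p\wedge dz_q\wedge d\bar z_p\wedge d\bar z_q=4\,\vol$. This yields
\[
\alpha^2\wedge c\wedge\bar c=8\!\sum_{p<q}\lambda_{\hat p}\lambda_{\hat q}|c_{pq}|^2\vol,\quad
\beta_1\wedge\omega\wedge c\wedge\bar c=4\!\sum_{p<q}(\mu_{\hat p}+\mu_{\hat q})|c_{pq}|^2\vol,\quad
\omega^2\wedge c\wedge\bar c=8\!\sum_{p<q}|c_{pq}|^2\vol .
\]
Since $\beta_0-2k\,\omega^2=-\alpha^2-\beta_1\wedge\omega-3k\,\omega^2$, adding these gives
\[
(\beta_0-2k\,\omega^2)\wedge c\wedge\bar c=-4\sum_{p<q}|c_{pq}|^2\big(2\lambda_{\hat p}\lambda_{\hat q}+\mu_{\hat p}+\mu_{\hat q}+6k\big)\vol ,
\]
and substituting $\mu_m=\lambda_m^2-\tfrac14|\alpha|^2$, $6k=\tfrac12|\alpha|^2$ collapses the bracket to $(\lambda_{\hat p}+\lambda_{\hat q})^2$. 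Hence $(\beta_0-2k\,\omega^2)\wedge c\wedge\bar c=-4\sum_{p<q}|c_{pq}|^2(\lambda_{\hat p}+\lambda_{\hat q})^2\,\vol\le0$ for every $c\in\bigwedge^{2,0}$. This gives (\ref{eqn:bogomolov}): for $c$ with $\gamma=\tfrac12(c+\bar c)\in A_+$ we have $\beta_1\wedge\omega\wedge\gamma\wedge\gamma=0$ — by (\ref{eqn:aspa}), $\gamma\wedge\gamma$ lies in $\Delta^{2,2}_{prim}\oplus\langle\Re(\theta)+8\,\omega^2\rangle$, which is orthogonal to $\bigtriangleup^{1,1}_{prim}\omega\ni\beta_1\wedge\omega$ — so $(\beta-3k\,\omega^2)\wedge\gamma\wedge\gamma=(\beta_0-2k\,\omega^2)\wedge\gamma\wedge\gamma=\tfrac12(\beta_0-2k\,\omega^2)\wedge c\wedge\bar c\le0$; equivalently, $\Phi_{-\alpha^2}$ is negative semidefinite.

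The only friction I expect is bookkeeping: fixing the signs and the constant $4$ in $e_{\hat p}\wedge e_{\hat q}\wedge dz_p\wedge dz_q\wedge d\bar z_p\wedge d\bar z_q=4\,\vol$, and extracting $\beta_1$ correctly. It is worth emphasizing that $2\lambda_{\hat p}\lambda_{\hat q}+\mu_{\hat p}+\mu_{\hat q}+6k$ is \emph{not} term-by-term non-negative before the substitution (for instance $\lambda_{\hat p}\lambda_{\hat q}+\tfrac14|\alpha|^2$ can be negative), so the $\beta_1$-contribution genuinely matters, and the inequality $(\beta-3k\,\omega^2)\wedge c\wedge\bar c\le0$ is to be read for $c$ with $\tfrac12(c+\bar c)\in A_+$ (equivalently, with $\beta$ replaced by its primitive part $\beta_0$).
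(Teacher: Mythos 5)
Your proof is correct, but it runs along a different axis than the paper's. The paper uses Remark \ref{rem:Spin6-transitive} to normalize the rotation parameter, fixing $c=dz_{12}+dz_{34}$ and keeping $\alpha=\sum a_{ij}dz_{i\bar j}$ general, and then closes the estimate with $|a_{13}a_{24}|\leq\frac12(|a_{13}|^2+|a_{24}|^2)$, etc.; you instead use $\SU(4)$ to diagonalize $\alpha=\sum_j\lambda_j e_j$ and keep $c$ general, compute the full Lefschetz data $k=\frac1{12}|\alpha|^2$, $\beta_1=\sum_j(\lambda_j^2-\frac14|\alpha|^2)e_j$ (both correct), and arrive at the exact identity
\begin{equation*}
(\beta_0-2k\,\omega^2)\wedge c\wedge\bar c=-4\sum_{p<q}|c_{pq}|^2(\lambda_{\hat p}+\lambda_{\hat q})^2\,\vol ,
\end{equation*}
a perfect-square formula rather than a Cauchy--Schwarz bound, which also identifies exactly when equality (hence $\Spin$-rotability) occurs and proves the stronger statement that the primitive-part form is $\leq 0$ for \emph{every} $c\in\bigwedge^{2,0}$, not just those with $\frac12(c+\bar c)\in A_+$. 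Your closing caveat about how (\ref{eqn:bogomolov}) must be read is well taken and matches what the paper actually establishes and uses: the paper's own proof only treats $c$ in the $A_+$-orbit of $dz_{12}+dz_{34}$, and what is needed later (Remark \ref{ref:conjecture-semi-def-neg}) is negative semidefiniteness of $\Phi_{-\alpha^2}$ on $A_+$; for $\gamma\in A_+$ the term $\beta_1\wedge\omega\wedge\gamma^2$ vanishes by (\ref{eqn:aspa}), while for arbitrary $c$ it does not (e.g.\ $\lambda=(2,2,-1,-3)$, $c=dz_{23}$ gives $(\beta-3k\,\omega^2)\wedge c\wedge\bar c>0$), so restricting to $A_+$, or equivalently passing to $\beta_0-2k\,\omega^2$, is exactly the right formulation.
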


\begin{proof}
Taking suitable coordinates, we can take $\omega$ to be standard and 
$c =dz_{12}+dz_{34}$ (see Remark \ref{rem:Spin6-transitive}). 
Write $\alpha=\sum a_{ij} dz_{i\bar{j}}$ and recall that
$a_{ji}=-\bar a_{ij}$ since $\alpha$ is a real form. In particular $a_{ii}$ are
purely imaginary. Denote $a=(a_{ij})$. As $\alpha$ is primitive, we have that 
$\sum a_{ii}=0$. Then $0=(\sum a_{ii})^2= \sum_{i\neq j} a_{ii}a_{jj}+ \sum a_{ii}^2$, hence 
$\sum_{i\neq j} a_{ii}a_{jj}= \sum |a_{ii}|^2$. Now we compute 
 $$
 \begin{aligned}
 24k\, \vol  &=\beta \wedge \omega^2 =- \alpha^2 \wedge \omega^2 \\
 &=- \sum_{a<b} a_{ij} a_{i'j'} 
  dz_{i\bar{j} i'\bar{j}'} \left(\frac{i}{2}\right)^2 2 dz_{a\bar a b \bar b} \\
 &=16 \, \frac12 \, \left( -\sum_{i\neq j}   a_{ij} a_{ji} + 
  \sum_{i \neq j} a_{ii}a_{jj} \right)\vol \\
 &=8  \, \left( \sum_{i\neq j}   |a_{ij}|^2 +  \sum_{i} |a_{ii}|^2 \right) \vol\\
 &= 8\left(\sum |a_{ij}|^2\right) \vol= 8 ||a||^2 \, \vol.
 \end{aligned}
 $$
An easy computation gives
 $$
 \beta \wedge c \wedge \bar c= 16(a_{13}a_{24} +a_{31}a_{42} 
+a_{14}a_{23} +a_{41}a_{32}
+ a_{34}a_{43}+a_{12}a_{21}+
a_{11}a_{22}+a_{33}a_{44}) \vol.
$$
Using that $|a_{13}a_{24}|\leq \frac12 (|a_{13}|^2+|a_{24}|^2)$, etc,
we have that 
 $$
 \beta \wedge c  \wedge \bar c  \leq 16 ||a||^2\vol =24\, k\, \vol= 
 3k\, |c |^2\,\vol = 3k \omega^2  \wedge c  \wedge \bar c ,
 $$
since $|c|=2\sqrt{2}$. This is the required inequality.
\end{proof}

Suposse that $E=E_1\oplus E_2$, where $E_1,E_2$ are holomorphic bundles of 
ranks $r_1,r_2$, and of the same
$\omega$-slope. Then
 $$
 \beta(E)=\beta(E_1)+ \beta(E_2)- \frac{r_1r_2}{2(r_1+r_2)}  
 \left(\frac{c_1(E_1)}{r_1}-\frac{c_1(E_2)}{r_2}\right)^2\, . 
 $$
If $E_1,E_2$ have the same $\omega$-slope then 
$$
\alpha=\frac{c_1(E_1)}{r_1}-\frac{c_1(E_2)}{r_2} \in \bigwedge\nolimits^{1,1}_{prim}
$$
 
Hence if $\beta(E_1)$ and $\beta(E_2)$ satisfy (\ref{eqn:bogomolov}), then $\beta(E)$
satisfies (\ref{eqn:bogomolov}), since $-\alpha^2$ does by Lemma \ref{eqn:bogomolov} and the sum of
classes which satisfy (\ref{eqn:bogomolov}) also satisfy it.

\begin{corollary}
 Let $E=\cO(D_1)\oplus \ldots \oplus \cO(D_r)$ be a rank $r$ polystable bundle which
is a direct sum of line bundles of the same $\omega$-slope.
Then $\beta(E)$ satisfies (\ref{eqn:bogomolov}).
\end{corollary}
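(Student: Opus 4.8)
The plan is to argue by induction on the rank $r$, peeling off one line bundle at a time, so that the statement reduces to the two facts already assembled before it: the formula for $\beta$ of a direct sum of two bundles of equal $\omega$-slope, and Lemma \ref{lem:alpha2}. First I would record the observation that the predicate ``$\beta$ satisfies (\ref{eqn:bogomolov})'' is preserved under addition and multiplication by positive scalars, because the map $\beta\mapsto (\beta-3k_\beta\,\omega^2)\wedge c\wedge\bar c$ is linear in $\beta$ (here $k_\beta=(\beta\wedge\omega^2)/\omega^4$ depends linearly on $\beta$); so the classes satisfying (\ref{eqn:bogomolov}) form a convex cone.

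For the base case $r=1$: a line bundle $\cO(D)$ has $c_2=0$ and $\frac{r-1}{2r}c_1^2=0$, hence $\beta(\cO(D))=0$; the zero class trivially satisfies (\ref{eqn:bogomolov}) since then $k=0$ and the left-hand side vanishes. For the inductive step, assume the result for direct sums of fewer than $r$ line bundles of equal $\omega$-slope, and write $E=E'\oplus\cO(D_r)$ with $E'=\cO(D_1)\oplus\cdots\oplus\cO(D_{r-1})$. Because $D_1\cdot H=\cdots=D_r\cdot H$, the bundle $E'$ is $\omega$-polystable of $\omega$-slope $\deg_H D_1$, the same as that of $\cO(D_r)$, and $E'$ is again a direct sum of line bundles of equal $\omega$-slope, so the induction hypothesis applies to it. The displayed formula for $\beta$ of a direct sum of two bundles of equal slope then gives
$$
\beta(E)=\beta(E')+\beta(\cO(D_r))-\frac{r-1}{2r}\,\alpha^2,\qquad
\alpha=\frac{c_1(E')}{r-1}-c_1(\cO(D_r))\in\triangle^{1,1}_{prim},
$$
the primitivity of $\alpha$ being exactly the consequence of the equality of slopes. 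By the induction hypothesis $\beta(E')$ satisfies (\ref{eqn:bogomolov}); $\beta(\cO(D_r))=0$ satisfies it; and $-\alpha^2$ satisfies it by Lemma \ref{lem:alpha2}. Since (\ref{eqn:bogomolov}) is preserved by the positive scaling by $\frac{r-1}{2r}$ and by summation, $\beta(E)$ satisfies (\ref{eqn:bogomolov}), completing the induction.

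There is no genuine obstacle here; the mathematical content is entirely contained in Lemma \ref{lem:alpha2} and the behaviour of $\beta$ under direct sums, which were established above. The only points that need to be stated with a little care are the bookkeeping that splitting off $\cO(D_r)$ leaves the two summands of equal $\omega$-slope, and the resulting membership $\alpha\in\triangle^{1,1}_{prim}$, which is precisely what makes Lemma \ref{lem:alpha2} applicable; everything else is the convex-cone remark.
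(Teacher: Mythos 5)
Your argument is correct and is essentially the paper's own: the corollary is stated there as an immediate consequence of the preceding paragraph, namely the two-summand formula for $\beta(E_1\oplus E_2)$, the primitivity of $\alpha=\frac{c_1(E_1)}{r_1}-\frac{c_1(E_2)}{r_2}$ forced by equality of slopes, Lemma \ref{lem:alpha2} for $-\alpha^2$, and the closedness of the classes satisfying (\ref{eqn:bogomolov}) under sums (and positive scalars, since $k_\beta$ is linear in $\beta$). Your explicit induction on $r$, peeling off $\cO(D_r)$ with the coefficient $\frac{r-1}{2r}$ and the trivial base case $\beta(\cO(D))=0$, merely spells out the induction the paper leaves implicit.
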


\subsection{Diagonal property for self-products} \label{subsec:diagonal}
Consider the  Jacobian $J$ of a (generic) genus $2$ curve, so $J$ is a $2$-dimensional
abelian variety which is principally polarised by some divisor $\theta$. By 
\cite{Debarre}, there is a vector bundle $F \to J$ with $c_1(F)= \theta$, $c_2(F)=p_0$, the class of a point.
In particular, $\beta(F)=c_2-\frac14 c_1^2=\frac34 p_0$.

Consider the bundle $E \to X=J \x J$, which is the pull-back of $F$ under the 
 map $\sigma: J\x J \to J$, $\sigma(x_1,x_2)=x_1-x_2$. It has $\beta(F)=  \frac34 \Delta$,
where $\Delta$ is the diagonal. Take the K\"ahler class $\omega=\sqrt{2}(\theta_1+\theta_2)$, where
$\theta_i=pr_i^* \theta$. Here we have normalized so that $\omega^4=24$ and the total volume to $1$. So
 $$
 k=\frac1{24} \beta \wedge\omega^2 =\frac1{32} \Delta \wedge \omega^2= \frac12\, ,
 $$
since $\Delta \wedge \omega^2=(2\sqrt{2}\theta)^2=8$.
If we write, in a suitable basis, $\omega=\frac{i}{2}\sum dz_{j\bar{j}}$, where
$(z_1,z_2)$ are the coordinates of the first factor $J$, and 
$(z_3,z_4)$ are the coordinates of the second factor $J$, then
we can take 
 $$
 c_1 =dz_{12}+ dz_{34}.
 $$
Clearly
 \begin{align*}
 &\beta \wedge c_1 \wedge \bar{c}_1 =\frac34 \Delta \wedge c_1 \wedge \bar{c}_1 = 
 \frac34 (2dz_{12}) \wedge (2d \bar{z}_{12}) = \frac34\, 16=12, \\
 &\omega^2 \wedge c_1 \wedge \bar{c}_1 = 16.
 \end{align*}
So we have 
 $(\beta- 3 k \omega^2) \wedge c_1 \wedge\bar{c}_1=  0$, which means that $\beta$ is $\Spin$-rotable.
Using the basis in Remark \ref{rem:aspa}, we check that $\beta\wedge c_j \wedge \bar{c}_j 
\leq 12$, for $j=2,\ldots 6$, agreeing with the Bogomolov inequality.

Note that the class $D=\Delta-\epsilon \, \omega^2$, $\epsilon>0$ small, does not satisfy
(\ref{eqn:bogomolov}), so it cannot be $\beta(E)$ for any polystable bundle $E$.
Even for any large $\ell>0$, $\ell D$ also cannot be $\beta(E)$ for a polystable bundle 
$E$, although the associated value $k=\frac{1}{24} \ell D\cup [\omega]^2$ is as large as we want.

\medskip

  The above situation is isomorphic
to taking $J\x \{p_0\}\subset J\x J$. More generally, we can consider 
 $X=Y \times Y'$, where $Y, Y'$ are complex $2$-tori, and $F=Y\x \{p_0\}$. Then
 the class  
  $$
   \beta=dz_{12\bar1\bar2}
   $$
is $\Spin$-rotable via  
  $$
  c\in \langle dz_{12}+dz_{34} , i\, dz_{12}-i\, dz_{34} \rangle\, .
  $$
We shall see in Subsection \ref{subsec:rot-prod} what is the rotated $4$-torus.

\subsection{Weil classes in Weil complex tori}
Weil complex tori are complex tori whose endomorphism
ring contains a purely imaginary quadratic field,  
$\QQ[\sqrt{-d}] \subset \End (X)$, $d>0$ square-free.
Let us describe them explicitly. Consider the ring
$L=\ZZ[\sqrt{-d}]$ and the field $K=\QQ [\sqrt{-d}]$.
There is a natural map $\varphi:L\to L$, $\varphi(x)=\sqrt{-d}\, x$.

Consider the lattice $\Lambda = L^4$, and let $V=\Lambda \otimes \RR \cong \RR^8$.
The map $I(x)= \frac{1}{\sqrt{d}} \, \varphi (x)$, $I:V\to V$, defines a
complex structure. So $(V,I)\cong (\CC^4,i)$. Now choose
two $I$-complex subspaces of dimension $2$, 
 $$
  V=W_+ \oplus W_- \, ,
  $$
and define
 $$
  J:V \to V, \qquad  J|_{W_+}=I, \quad J|_{W_-}=-I\, .
 $$
Then $(V,J)$ is a complex vector space, $\Lambda\subset V$, and let
 $$
 X=(V,J)/\Lambda\, .
 $$

This is a complex $4$-torus. Note that $IJ=JI$, so $\varphi:X\to X$
is holomorphic, and $\varphi^2=-d\, \text{Id}$.
Hence $\ZZ[\sqrt{-d}]\subset \End (X)$. This is called a \emph{Weil complex torus}.
For generic choice of $W_\pm$, we have $\ZZ[\sqrt{-d}]=\End(X)$.

The Weil classes are the $(2,2)$-classes lying in
   \begin{align*}
    K \cong & \bigwedge\nolimits^4_K (\Lambda\ox \QQ) \subset
    \bigwedge\nolimits^{4,0}_I V  = \bigwedge\nolimits^{4,0}_I (W_+\oplus W_-) =\\
     &=\bigwedge\nolimits^{2,0}_I  W_+ \otimes \bigwedge\nolimits^{2,0}_I {W}_- 
    =\bigwedge\nolimits^{2,0}_J  W_+ \otimes \bigwedge\nolimits^{0,2}_J {W}_-
    \subset \bigwedge\nolimits^{2,2}_J V\, .
  \end{align*}
These are Hodge classes (rational classes of pure type).
We note that the space of Weil classes is a rational vector space of dimension $2$.
The generic Weil complex torus does not have more Hodge classes \cite{Moonen-Zarhin}.

We may put many K\"ahler forms on $X$. Note that
 $$
  \bigwedge\nolimits^{1,1}_I V= \bigwedge\nolimits^{1,1} W_+ \oplus \bigwedge\nolimits^{1,1} W_- \
        \oplus \bigwedge\nolimits^{1,0} W_+ \bigwedge\nolimits^{0,1} W_-
        \oplus \bigwedge\nolimits^{0,1} W_+ \bigwedge\nolimits^{1,0} W_-
  $$
If we consider K\"ahler forms so that $W_+ \perp W_-$, then $\omega$ lies in
  \begin{equation}\label{ddde}
  \bigwedge\nolimits^{1,1} W_+ \oplus \bigwedge\nolimits^{1,1} W_- \subset \bigwedge\nolimits^{1,1}_I V\, .
  \end{equation}
We want the Riemannian metric $g$ to be positive-definite for $X$, i.e.
$g=g_+ + g_-$, where $g_\pm$ is positive definite on $W_\pm$. As $g(x,Jy)=\omega(x,y)$,
and $J_\pm=\pm I$ on $W_\pm$, we see that $\omega$ defines an \emph{indefinite}
semi-riemannian metric $\tilde g=g_+ - g_-$ for $(V,I)$.
Said otherwise, we have complex coordinates $(w_1,w_2,w_3,w_4)$ for $(V,I)$
such that $\tilde h= \frac{i}2 (dw_{1\bar1}+dw_{2\bar2}-dw_{3\bar3}-dw_{4\bar4})$
is an indefinite K\"ahler form, $\tilde h=\tilde g+ i \omega$, $W_+$ has coordinates
$(w_1,w_2)$ and $W_-$ has coordinates $(w_3,w_4)$. The complex coordinates for $(V,J)$
are $(z_1,z_2,z_3,z_4)=(w_1,w_2,\bar w_3,\bar w_4)$ and the induced
K\"ahler form is $h= \frac{i}2 (dz_{1\bar1}+dz_{2\bar2}+dz_{3\bar3}+dz_{4\bar4})$.

Take now a Weil class
 $\beta_0 = 2\,\Re(dw_{1234}) \in \bigwedge_K^4 (K^4)$. In coordinates $(z_1,z_2,z_3,z_4)=
(w_1,w_2,\bar w_3,\bar w_4)$, 
 $\beta_0= 2 \, \Re(dz_{12\bar3\bar4})= dz_{12\bar3\bar4}+dz_{\bar1\bar234}$, 
which is rational and of $J$-type $(2,2)$, up to normalization (that we shall ignore).
Recall that $W_+=\la z_1,z_2 \ra$, $W_-=\la z_3,z_4 \ra$.
Using the basis of $A_+$ in Remark \ref{rem:aspa}, one checks easily that
 \begin{equation}\label{eqn:mm2}
 k_m(\beta_0)=1.
 \end{equation}
The maximum value of $\beta_0 \wedge c \wedge \bar c$ is $32$ and it 
is achieved for $c=c_1=dz_{12}+dz_{34}$. 

So if we consider 
\begin{equation} \label{eqn:mmm} 
 \begin{aligned}
 \beta_0 &=dz_{12\bar3\bar4}+dz_{\bar1\bar234}, \\
 \omega &=\frac{i}{2} (dz_{1\bar1}+dz_{2\bar2}+dz_{3\bar3}+dz_{4\bar4}), \\
 \beta &= \beta_0 + k \, \omega^2, \\
 k &= 1, 
 \end{aligned}
 \end{equation}
the $\Spin$-rotation equation (cf.\ Equation (\ref{eqn:main})) 
 $$
 (\beta_0 - 2k\,\omega^2) \wedge c \wedge \bar{c} = 0
 $$
has solution
 $$
  c  = dz_{12}+dz_{34}\, .
 $$

Summarizing, 
 \begin{equation} \label{eqn:alfa}
 \beta=(dz_{12\bar3\bar4}+dz_{\bar1\bar234}) + \omega^2
 \end{equation}
is a $\Spin$-rotable class, where the rotation is given by $c  = dz_{12}+dz_{34}$.
We shall see later that $\Spin$-rotation produces a modification 
of the complex structure on $W_+=\la z_1,z_2 \ra$
and on $W_-=\la z_3,z_4 \ra$, keeping them orthogonal.

\section{Rotating in $2$ dimensions}\label{sec:2-dim}

There is an analogue of the $\Spin$-rotation in $2$ complex dimensions, that is
worth to review. It is closely related to the twistorial construction.
We mention that this idea has been used by M. Toma \cite{Toma} to produce 
stable bundles on complex $2$-tori.

Let $X=V/\Lambda$ be a $2$-dimensional complex torus, with a K\"ahler form $\omega$,
and complex structure $J$. Let $\Pi$ be its period matrix, that is the $2\x 4$-matrix
whose columns are the vectors of a basis of the lattice $\Lambda$, with respect to
some complex coordinates of $V\cong \CC^2$. This is well defined up to the action
of a matrix of $\GL(2,\CC)$ on the left and a matrix of $\GL(4,\ZZ)$ on the right.

In $2$ dimensions, we have the inclusion of groups
 $U(2)\subset SO(4)$, and the quotient
 $$
 SO(4)/U(2)=S^2=S(\bigwedge\nolimits^2_+)
$$ 
parametrizes complex structures on $X$ compatible with the metric. Here
 $$
\bigwedge\nolimits^2_+=\triangle^{2,0} \oplus \langle \omega\rangle, \qquad
\bigwedge\nolimits^2_-=\triangle^{1,1}_{prim} \, .
 $$

We can rotate the complex structure by taking $r,s\in \RR$, $r^2+s^2=1$ and
considering the new K\"ahler form
 $$
 \omega'=r\, \omega+s \, \gamma,
 $$
where $\gamma \in \triangle^{2,0}$, $|\gamma|=|\omega|=\sqrt{2}$. This defines a new complex
structure $J'$ by the equation $g(x,y)=\omega'(x,J'y)$.

Let us find explicitly the new abelian variety $X'=(V,J')/\Lambda$.
For a suitable choice of complex coordinates $(z_1,z_2)$ for $(V,J)=\CC^2$, we may write
  \begin{align*}
  \omega &= \frac{i}{2} (dz_{1\bar1}+dz_{2\bar2}), \\
  \gamma &= \frac12 (dz_{12}+dz_{\bar1\bar2}),
  \end{align*} 
where we abbreviate $dz_{12}=dz_1\wedge dz_2$, $dz_{1\bar1}=dz_1\wedge d\bar z_1$, etc.

We introduce real coordinates $z_1=x_1+iy_1$, $z_2=x_2+iy_2$, so that
  \begin{align*}
  \omega &= dx_1\wedge dy_1+ dx_2\wedge dy_2, \\
  \gamma &= dx_1\wedge dx_2 - dy_1\wedge dy_2.
  \end{align*} 
Therefore
 $$
 \omega'=r\,dx_1\wedge dy_1+ r\,dx_2\wedge dy_2 + s\,dx_1\wedge dx_2 - s \, dy_1\wedge dy_2.
 $$
Using the equation $g(x,y)=\omega'(x,J'y)$, we easily compute the matrix of $J'$ in this basis:
 $$
 J'=\left( \begin{array}{cccc} 0 & -r & -s & 0 \\ r & 0 & 0 & s \\ s & 0 & 0 & -r \\ 0 & -s & r & 0 
 \end{array} \right).
 $$
We want to put new coordinates so that $J'$ becomes the standard complex structure. These
coordinates are given by
 $$
 \left( \begin{array}{c} x_1' \\ y_1' \\ x_2' \\ y_2' \end{array} \right)
 = M  \left( \begin{array}{c} x_1 \\ y_1 \\ x_2 \\ y_2 \end{array} \right), \ \text{where }
 M=\left( \begin{array}{cccc} 1 & 0 & 0 & 0 \\ 0 & r & s & 0 \\ 0 & -s & r & 0 \\ 0 & 0 & 0 & 1
 \end{array} \right).
 $$
The new complex coordinates are $z_1'=x_1'+iy_1'$, $z_2'=x_2'+iy_2'$.

Conjugating by $M$, we see that the new complex torus is $X' \cong \CC^2/\Lambda'$, where
the matrix period corresponding to $\Lambda'=M\, \Lambda$ is 
 $$
 \Pi'= M\, \Pi 
 $$
(writing the matrix $\Pi$ as a $4\x 4$-real matrix by putting the real and imaginary
parts of each complex entry vertically).

%

\subsection{Rotating a product of two $2$-tori} \label{subsec:rot-prod}
Let us start by looking at a simple case of $\Spin$-rotation of a complex $4$-torus
which is the product of two complex $2$-tori.
Let $X=Y_1 \times Y_2$, where $Y_1, Y_2$ are complex $2$-tori. Put
a product metric, and orthonormal complex coordinates $(z_1,z_2,z_3,z_4)$,
where $(z_1,z_2)$ are coordinates of $Y_1$ and $(z_3,z_4)$ are coordinates
of $Y_2$. 
Consider the class
  $$
   \beta=dz_{12\bar1\bar2} \in H^4(X).
   $$
This is the Poincar\'e dual of a multiple of $F=Y_1\x \{pt\}\subset X$, which is
an algebraic class. Up to a multiple, $\beta$ is a rational class. By \cite{Debarre},
there is a stable rank $2$ bundle $E\to Y_2$ with $c_1=0$, $c_2=\{pt\}$. Pulling it
back to $X$, we see that $\beta$ is $\omega$-HYM (up to a multiple, which we shall 
ignore).

We compute the value 
 $$
 k=\frac{\beta\wedge \omega^2}{\omega^4}=\frac{-\frac12 dz_{12\bar1\bar2}
 \wedge dz_{3\bar34\bar4}}{\frac{24}{16} dz_{1234\bar1\bar2\bar3\bar4}}= \frac13\,.
$$
Any rotation parameter
  $$
  c \in \langle dz_{12}+dz_{34} , i\, dz_{12}-i\, dz_{34} \rangle
  $$
satisfies the rotation equation (\ref{eqn:main}): take $c =
\frac12 (w \, dz_{12}+\bar{w} dz_{34})$, where $w\in \CC$.
Then $|c|=\sqrt{2} \, |w|$, and we compute
  $$
  (\beta-3 k \, \omega^2)\wedge c \wedge \bar{c}= 
   \frac14 |w|^2 \,  16 \,\vol - 3 k\, 2(\sqrt{2}\,|w|)^2 \vol =0.
  $$

Applying the $\Spin$-rotation, we get a
new complex structure $J'$ given by the same Riemannian metric and the K\"ahler form
 $$
 \omega'= \frac{1}{\sqrt{1+|w|^2}}\left(
 \frac{i}{2}(dz_{1\bar1}+dz_{2\bar2}+dz_{3\bar3}+dz_{4\bar4}) + \frac12(w
 \, dz_{12}+ \bar{w} dz_{\bar1\bar2} +\bar{w} dz_{34}+ w\,
  dz_{\bar3\bar4}) \right).
  $$
Clearly, $\omega'=\omega_1'+\omega_2'$ where
$\omega_1'=\frac{1}{\sqrt{1+|w|^2}}(\frac{i}{2}(dz_{1\bar1}+dz_{2\bar2})+ \frac12(w
 \, dz_{12}+ \bar{w} dz_{\bar1\bar2}))$ and 
$\omega_2'=\frac{1}{\sqrt{1+|w|^2}}(\frac{i}{2}(dz_{3\bar3}+dz_{4\bar4}) +\frac12( 
\bar{w} dz_{34}+ w\, dz_{\bar3\bar4}))$. Therefore $X'=
Y_1' \times Y_2'$, where
$Y_1'$ corresponds to $(Y_1,\omega_1')$ and $Y_2'$ corresponds to
$(Y_2,\omega_2')$.
 So the rotated torus keeps being a product of two $2$-tori.

\section{$\Spin$-rotation of a Weil abelian variety} \label{sec:rotating-Weil}

Now we want to give explicitly a non-trivial example of a $\Spin$-rotation of
a complex $4$-torus, starting with a Weil abelian variety $X$ which is a product
of two Weil abelian surfaces. The $\Spin$-rotated complex torus turns out to
be another Weil abelian variety $X'$ which is not decomposable.
Moreover, $X'$ will be, from the arithmetic point of view, very different from
the starting torus $X$.

As this is a long discussion, we shall divide it in steps for the convenience of the
reader.

\subsection{General set-up}
We start by writing down explicitly the period matrix of a Weil abelian variety. 
Consider $K=\QQ[\sqrt{-d}]$, $d>0$ a square-free integer. Write $(v_1,v_2,v_3,v_4)$ for 
the standard coordinates of $V=K^4\otimes \RR$. 
We consider the indefinite hermitian form $\tilde h=\frac{i}{2}(dv_{1\bar1}+dv_{2\bar2}-
dv_{3\bar3}-dv_{4\bar4})$. 

Take a basis for the subspaces $W_\pm$, which we write as the columns of the matrix 
  $$
  \left( \begin{array}{cccc} 1 & 0 & a' & e' \\
  0 & 1 & b' & f' \\
  a & e & 1 & 0 \\
  b & f & 0 & 1
  \end{array} \right)
  $$
(the first two columns are the basis of $W_+$ and the remaining two are the basis of $W_-$).
We call this matrix the \emph{defining matrix} of the Weil torus.
For $X$ to be an abelian four-fold, we have to take $W_+\perp W_-$, w.r.t.\ $\tilde h$. In this case,
the defining matrix is of the form
  \begin{equation}\label{eqn:AA}
  \left( \begin{array}{cccc} 1 & 0 & \bar a & \bar b \\
  0 & 1 & \bar e & \bar f \\
  a & e & 1 & 0 \\
  b & f & 0 & 1
  \end{array} \right).
  \end{equation}
Moreover, note that $I-AA^*>0$,
where $A=  \left( \begin{array}{cc} a & e \\ b & f  \end{array} \right)$.
If we change to coordinates $(w'_1,w'_2,w'_3,w'_4)$ in which the 
subspaces $W_\pm$ are the standard ones (by multiplying by the inverse of
the matrix (\ref{eqn:AA})), the lattice becomes generated (over $L=\ZZ[\sqrt{-d}]$)
by the columns of 
 $$
 \left(\begin{array}{cc} (I-A^*A)^{-1} &0 \\ 0 & (I-AA^*)^{-1} \end{array} \right)\, 
 \left(\begin{array}{cc} I & -A^* \\ -A & I \end{array} \right)
 $$
The condition $W_+\perp W_-$ gives that the metric is rewritten 
as $\tilde h=\frac{i}{2} (dw'_{1\bar1}+dw'_{2\bar2}-
dw'_{3\bar3}-dw'_{4\bar4})$. By definition, $\tilde h$ is defined over $K$,
so it takes values in $L=\ZZ[\sqrt{-d}]$ on $\Lambda$.
Writing $\tilde h=\tilde g+ i \omega$, we have that
$\omega=\Im(\tilde h)$ takes values in $\delta\,\ZZ$, $\delta=\sqrt{d}$, 
on the lattice $\Lambda$.

The lattice of a complex torus is defined up to multiplication by
a matrix of $\GL(4,\CC)$ on the left. If we multiply
by a matrix of the type $ \left(\begin{array}{cc} P & 0 \\ 0 & Q \end{array} \right)$,  we ensure
that the subspaces $W_\pm$ are still the standard ones, but can arrange the lattice to be 
generated by the columns of 
  $$
  \left( \begin{array}{cccc} 1 & 0 & -\bar a & -\bar b \\
  0 & 1 & -\bar e & -\bar f \\
  -a & -e & 1 & 0 \\
  -b & -f & 0 & 1
  \end{array} \right)
  $$
and these elements multiplied by $\sqrt{-d}=i\delta$. 
Let $(w_1,w_2,w_3,w_4)$ be these new coordinates.
To go to the complex structure $J$, we need to conjugate the complex
structure on $W_-$. So we take $z_3=\bar w_3$, $z_4=\bar w_4$.
The lattice becomes
  \begin{equation}\label{eqn:AA2}
  \left( \begin{array}{cccc} 1 & 0 & -\bar a & -\bar b \\
  0 & 1 & -\bar e & -\bar f \\
  -\bar a & -\bar e & 1 & 0 \\
  -\bar b & -\bar f & 0 & 1
  \end{array} \right).
  \end{equation}
and the $\varphi$-transforms of those, i.e.\ the period matrix is
  $$
  \Pi= \left( \begin{array}{cccccccc} 
  1 & i\delta & 0 & 0 & -\bar a & -i\delta\bar a & -\bar b & -i\delta\bar b \\
  0 & 0 & 1 & i\delta & -\bar e & -i\delta\bar e & -\bar f & -i\delta\bar f \\
  -\bar a & i\delta\bar a & -\bar e & i\delta\bar e & 1 & -i\delta & 0 & 0 \\
  -\bar b & i\delta\bar b & -\bar f & i\delta\bar f & 0 & 0 & 1 & -i\delta
  \end{array} \right),
$$
where $\varphi=\left( \begin{array}{cccc} i\delta & 0 & 0 &0 \\ 0 & i\delta &0 &0\\
0 & 0& -i\delta &0 \\ 0& 0& 0 & -i\delta \end{array} \right)$. This is the general
form of the period matrix of a Weil abelian variety.
We say that (\ref{eqn:AA2}) is the \emph{reduced period matrix} of $X$.

\smallskip

To continue, we shall put an extra condition to ensure that the $\Spin$-rotated
torus is an abelian variety. 
If $\omega$ is the K\"ahler form of $X$, then the K\"ahler form of the $\Spin$-rotated
torus is
 $$
 \omega'=2 \frac{\omega+\gamma}{|\omega+\gamma|}= r\omega+ s\, \gamma,
 $$
where $r,s\in \RR$ satisfy $r^2+s^2=1$,
where we have taken $\gamma\in A_+$ with $|\gamma|=\sqrt{2}$ 
(see (\ref{eqn:omega'}) for notations).

In the coordinates $(z_1,z_2,z_3,z_4)$ of $X$, we have the
following choices obtained in (\ref{eqn:mmm}) for performing a 
$\Spin$-rotation,
 \begin{equation}\label{eqn:alfa}
 \begin{aligned} %
 \beta &=(dz_{12\bar3\bar4}+dz_{\bar1\bar234}) + \omega^2, \\
  \omega &= \frac{i}{2} (dz_{1\bar1}+dz_{2\bar2}+dz_{3\bar3}+dz_{4\bar4}), \\
  \gamma &= \frac12 (dz_{12}+dz_{34}+dz_{\bar1\bar2}+dz_{\bar3\bar4}),
 \end{aligned}
 \end{equation}
In order for $X'$ to be an abelian variety, we shall require that $\omega'$ be a rational class,
which can be guaranteed if $\gamma=\Re(dz_{12}+dz_{34})$ is a rational class and
$r,s\in \QQ$, with $r^2+s^2=1$.
For achieving this, take the holomorphic symplectic form $c=dv_{12}+dv_{34}$ for $(V/\Lambda,I)$.
It takes values in $\ZZ[\sqrt{-d}]$ on $L$. Now take $W_+ \perp W_-$ w.r.t.\ $c$. In
terms of the matrix (\ref{eqn:AA}), this puts the extra condition
$f=-\bar a, e=\bar b$, i.e.,
 $$
 A=\left(\begin{array}{cc} a & -\bar b\\ b & - \bar a\end{array}\right).
 $$
In this case $A$ is an orthogonal
matrix (i.e.\ it is in $\RR_+\cdot \U(2)$), so the metric $\tilde h$
only differs by a factor when changing from coordinates $(w_1',w_2',w_3',w_4')$ to
$(w_1,w_2,w_3,w_4)$. Absorbing this factor 
$\nu=\det(I-AA^*)=1-|a|^2-|b|^2$ in the metric, we have that $\tilde h=
\frac{i}{2} (dw_{1\bar1}+dw_{2\bar2}-dw_{3\bar3}-dw_{4\bar4})$, and 
$\omega=\Im (\tilde h)$ takes values in $\nu \delta \ZZ$.

Now the class $c$ is rewritten as $c=dw_{12}+dw_{34}=(1-|a|^2-|b|^2) (dv_{12}+dv_{34})$. It takes values in 
$\nu\, L$. As $\gamma=\Re (c)=\Re(dw_{12}+dw_{34})=
\Re(dz_{12}+dz_{34})$, under $(z_1,z_2,z_3,z_4)=(w_1,w_2,\bar w_3,\bar w_4)$, we have that
$\gamma$ takes values in $\nu\ZZ$.
So $\omega'=r\omega+s\gamma$ is rational (up to a multiple) when  
$r^2 + s^2=1$,
where $s=\delta \hat s$, $r, \hat s\in \QQ$. This is equivalent to
$r^2+d \hat s^2=1$, $r,\hat s\in \QQ$.

\subsection{Period matrix of a $\Spin$-rotated torus}
Summarizing, our starting Weil abelian torus has
reduced period matrix (\ref{eqn:AA2}) 
 $$
  \left( \begin{array}{cccc} 1 & 0 & -\bar a & -\bar b \\
  0 & 1 & -b & a \\
  -\bar a & -b & 1 & 0 \\
  -\bar b & a & 0 & 1
  \end{array} \right).
  $$
In real coordinates, the period matrix is
 $$
   \Pi=\left( \begin{array}{cccccccc}
 1&  0&  0&  0&  -a_1&  -a_2 \delta &  -b_1&  -b_2 \delta \\  
 0&  \delta &  0&  0&  a_2&  -a_1 \delta &  b_2&  -b_1 \delta \\   
 0&  0&  1&  0&  -b_1&  b_2 \delta &  a_1&  -a_2 \delta \\  
 0&  0&  0&  \delta &  -b_2&  -b_1 \delta &  a_2&  a_1 \delta \\   
 -a_1&  a_2 \delta &  -b_1&  -b_2 \delta  &  1&  0&  0&  0\\  
 a_2&  a_1 \delta &  -b_2&  b_1 \delta &  0&  -\delta &  0&  0\\    
-b_1&  b_2 \delta &  a_1&  a_2 \delta &  0&  0&  1&  0\\  
 b_2&  b_1 \delta &  a_2&  -a_1 \delta &  0&  0&  0&  -\delta   
\end{array} \right).
  $$

If  $z_1=x_1+iy_1$, $z_2=x_2+iy_2$, $z_3=x_3+iy_3$, $z_4=x_4+iy_4$ are the complex
coordinates of $X$,
and  $z_1'=x_1'+iy_1'$, $z_2'=x_2'+iy_2'$, $z_3'=x_3'+iy_3'$, $z_4'=x_4'+iy_4'$ are
the complex coordinates of the $\Spin$-rotated torus $X'$, the discussion in 
Section \ref{sec:2-dim} implies that they are related by
  $$
 \left( \begin{array}{c} x_1' \\ y_1' \\ x_2' \\ y_2' \end{array} \right)
 = M  \left( \begin{array}{c} x_1 \\ y_1 \\ x_2 \\ y_2 \end{array} \right), \
  \text{and} \left( \begin{array}{c} x_3' \\ y_3' \\ x_4' \\ y_4' \end{array} \right)
 = M  \left( \begin{array}{c} x_3 \\ y_3 \\ x_4 \\ y_4 \end{array} \right), 
\ \text{where }
 M=\left( \begin{array}{cccc} 1 & 0 & 0 & 0 \\ 0 & r & s & 0 \\ 0 & -s & r & 0 \\ 0 & 0 & 0 & 1
 \end{array} \right).
 $$

\begin{remark}
The change of variables relating $(x_j,y_j)$ and
$(x_j',y_j')$ can be explicitly used to check the formula
 $$
 \omega'=\frac{i}{2} (dz'_{1\bar1}+dz'_{2\bar2}+dz'_{3\bar3}+dz'_{4\bar4}),
 $$
and also to rewrite the class (\ref{eqn:alfa}) in the new coordinates. The result is 
 $$
\beta = (dz'_{12\bar3\bar4}+dz'_{\bar1\bar234}) + (\omega')^2\, .
 $$
Note that $\beta$ is type $(2,2)$ w.r.t.\ $J'$ as expected by Theorem \ref{thm:rotation}.
\end{remark}

\medskip

The rotated Weil abelian variety $X'$ has period matrix $\Pi'$ obtained by multiplying
$\Pi$ by 
 \begin{equation}\label{eqn:29A0}
  \tilde M= \left( \begin{array}{cc} M & 0 \\  0 & M \end{array}\right).
 \end{equation}
That is,
  $$
\Pi' = \left( \begin{array}{cccc}
 1&  0&  0&  0 \\ 
 0&  \delta r &  s&  0 \\ 
 0&  -\delta s&  r&  0 \\ 
 0&  0&  0&  \delta \\ 
 -a_1&  a_2 \delta &  -b_1&  -b_2 \delta  \\ 
 a_2 r -b_1 s&  (a_1r+b_2s) \delta &  -b_2r + a_1s &  (b_1r+a_2s) \delta \\ 
-b_1r-a_2s& ( b_2r-a_1s) \delta &  a_1r+b_2s&  (a_2 r-b_1s)\delta \\ 
 b_2&  b_1 \delta &  a_2&  -a_1 \delta 
\end{array}  \right. \hspace{4cm} \, 
  $$
    $$
   \hspace{4cm} \left. \begin{array}{cccc}
  -a_1&  -a_2 \delta &  -b_1&  -b_2 \delta \\  
a_2 r- b_1s &  (-a_1 r+b_2 s) \delta &  b_2r+a_1s&  (-b_1r -a_2s)\delta \\   
  -a_2s-b_1r &  (a_1s+b_2r) \delta & - b_2s + a_1r&  (b_1s-a_2r) \delta \\  
 -b_2&  -b_1 \delta &  a_2&  a_1 \delta \\   
1&  0&  0&  0\\  
0&  -\delta r & s&  0\\    
  0&  \delta s&  r&  0\\  
0&  0&  0&  -\delta  \end{array} \right).
  $$
In complex terms, it is 
  $$
  \left( \begin{array}{cccc}
 1&  \delta r i&  si&  0 \\ 
    0&  -\delta s&  r&  \delta i \\ 
 -a_1+ ( a_2 r -b_1 s) i &  a_2 \delta  +(a_1r+b_2s) \delta i &  -b_1+ (-b_2r + a_1s) i&  
 -b_2 \delta + (b_1r+a_2s) \delta i  \\ 
-b_1r-a_2s+ b_2 i& ( b_2r-a_1s) \delta+  b_1 \delta i &  a_1r+b_2s+ a_2 i&  (a_2 r-b_1s)\delta -a_1 \delta  i
\end{array}  \right. \hfill \,
  $$
  $$
  \hfill  \left. \begin{array}{cccc}
-a_1+(a_2 r- b_1s)i&  -a_2 \delta + (-a_1 r+b_2 s) \delta i &  -b_1 +(b_2r+a_1s)i 
&  -b_2 \delta +(-b_1r -a_2s)\delta i \\
-a_2s-b_1r  -b_2 i&  (a_1s+b_2r) \delta  -b_1 \delta i & - b_2s + a_1r+ a_2 i 
&  (b_1s-a_2r) \delta + a_1 \delta i\\   
  1&  -\delta r i &  s i &  0\\  
 0&  \delta s&  r&  -\delta i  
\end{array} \right).
  $$

\subsection{Our starting abelian variety}
To perform explicitly the $\Spin$-rotation, we are going to consider the case
where the reduced  period matrix of $X$ is 
 \begin{equation}\label{eqn:red-per-mat}
  \left( \begin{array}{cccc} 1 & 0 & -\bar a & 0\\ 0 & 1 & 0 & a 
   \\ -\bar a & 0 & 1 & 0 \\ 0 & a & 0 & 1 \end{array}\right).
  \end{equation}
This means that $X=Y_1\x Y_2$, where $Y_1,Y_2$ are two abelian surfaces of
Weil type. The first abelian surface $Y_1$ corresponds to coordinates $(z_1,z_3)$
and the second one $Y_2$, to coordinates $(z_2,z_4)$.

\begin{lemma}\label{lem:product-non-trivial}
The two surfaces $Y_1,Y_2$ are non-isomorphic, for generic $a\in \CC$, $|a|<1$. 
\end{lemma}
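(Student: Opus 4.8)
The plan is to make the two lattices explicit and then exploit a rigidity coming from the fact that the period matrix of $Y_1$ involves $\bar a$ while that of $Y_2$ involves $a$. Put $\delta=\sqrt d$ and $L=\ZZ[\sqrt{-d}]$. First I would read off the period data of the two factors from (\ref{eqn:red-per-mat}): in the coordinates $(z_1,z_3)$ the lattice of $Y_1$ is generated over $L$ by the columns of $P_1(a)=\left(\begin{smallmatrix}1&-\bar a\\-\bar a&1\end{smallmatrix}\right)$, and in $(z_2,z_4)$ the lattice of $Y_2$ by the columns of $P_2(a)=\left(\begin{smallmatrix}1&a\\a&1\end{smallmatrix}\right)$; in both factors the generator $\sqrt{-d}$ of $L$ acts as $\varphi=\operatorname{diag}(i\delta,-i\delta)$, since $z_1,z_2\in W_+$ and $z_3,z_4\in W_-$. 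Hence $Y_j(a)=\CC^2/\Lambda_j(a)$, where $\Lambda_j(a)$ is the $\ZZ$-span of the columns of the $2\x 4$ matrix $\Pi_j(a)=\bigl(P_j(a)\mid\varphi P_j(a)\bigr)$. The feature to exploit is that the entries of $\Pi_1(a)$ are affine functions of $\bar a$ while those of $\Pi_2(a)$ are affine functions of $a$; in fact $\Pi_1(a)=\Pi_2(-\bar a)$, i.e. $Y_1(a)\iso Y_2(-\bar a)$.

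An isomorphism of complex tori $Y_1(a)\iso Y_2(a)$ is exactly a pair $(\Psi,U)\in\GL(2,\CC)\x\GL(4,\ZZ)$ with $\Psi\,\Pi_1(a)=\Pi_2(a)\,U$ (no compatibility with the $L$-action need be imposed). For a fixed $U$ I would study $S_U=\{\,|a|<1:\ \exists\,\Psi\in\GL(2,\CC),\ \Psi\Pi_1(a)=\Pi_2(a)U\,\}$. The first two columns of $\Pi_1(a)$ form $P_1(a)$, which is invertible throughout the disc ($\det P_1=1-\bar a^{2}$), so $\Psi$ is forced: $\Psi=Q_2(a)P_1(a)^{-1}$, where $Q_2(a)$ is the first $2\x2$ block of $\Pi_2(a)U$; substituting into the equation for the remaining block $R_2(a)$ of $\Pi_2(a)U$ rewrites membership in $S_U$ — off the proper locus $\det Q_2(a)=0$, which is the zero set of a polynomial in $a$, and if that polynomial vanishes identically then $\Psi$ is never invertible and $S_U=\varnothing$ — as the real-analytic identity
$$
P_1(a)^{-1}\,\varphi\,P_1(a)\;=\;Q_2(a)^{-1}R_2(a).
$$
Here the left-hand side depends only on $\bar a$ and the right-hand side only on $a$, so if this held on a nonempty open set, both sides would be locally — hence globally, by real-analytic continuation — constant. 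But a direct computation gives
$$
P_1(a)^{-1}\,\varphi\,P_1(a)=\frac{i\delta}{1-\bar a^{2}}\begin{pmatrix}1+\bar a^{2}&-2\bar a\\2\bar a&-(1+\bar a^{2})\end{pmatrix},
$$
which is not constant. Therefore each $S_U$ is contained in a proper real-analytic — hence closed, nowhere dense, measure-zero — subset of $\{|a|<1\}$.

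Finally, $\{\,a:\ Y_1(a)\iso Y_2(a)\,\}=\bigcup_{U\in\GL(4,\ZZ)}S_U$ is a countable union of such subsets, hence meager and of measure zero by the Baire category theorem; for every $a$ off this set $Y_1(a)$ and $Y_2(a)$ are non-isomorphic, which is the statement for generic $a$. The genuinely delicate step is the one just carried out — excluding that a single fixed $U\in\GL(4,\ZZ)$ could realise an isomorphism for all $a$ at once — and it is precisely there that the holomorphic-versus-antiholomorphic dichotomy between $\Pi_2(a)$ and $\Pi_1(a)$ is essential; the remaining points (solving for $\Psi$, real-analyticity of $S_U$, the Baire step) are routine bookkeeping. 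Equivalently, one may phrase the whole argument through the period map: $a\mapsto[Y_2(a)]$ is holomorphic and $a\mapsto[Y_1(a)]=[Y_2(-\bar a)]$ anti-holomorphic, so they agree on no open set unless the family $\{Y_2(a)\}_{|a|<1}$ is isotrivial, which it is not.
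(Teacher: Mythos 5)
Your proof is correct, but it takes a genuinely different route from the paper's. The paper first invokes the endomorphism ring: for generic $a$ one has $\End(Y_1)=\End(Y_2)=\QQ[\sqrt{-d}]$, so any isomorphism must intertwine $\varphi$ up to sign and hence preserve or swap its eigenspaces; this reduces the isomorphism to a matrix over $K$ relating the two defining matrices and leads to an equation $-\bar a=(\gamma+\delta a)/(\alpha+\beta a)$ with $\alpha,\beta,\gamma,\delta\in K$, which cannot hold for general $a$. You bypass the complex-multiplication input entirely: you enumerate all lattice isomorphisms through the countable group $\GL(4,\ZZ)$, solve for the linear lift $\Psi$ using the invertibility of $P_1(a)$ on the disc, and convert each candidate $U$ into the identity $P_1(a)^{-1}\varphi P_1(a)=Q_2(a)^{-1}R_2(a)$, whose left side is anti-holomorphic and right side holomorphic in $a$; the explicit non-constancy of the left side then forces each $S_U$ into a proper real-analytic subset, and the countable union over $U$ finishes the argument. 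The underlying rigidity is the same in both proofs (an identity pitting $\bar a$ against a fixed rational expression in $a$, with only countably many choices of the discrete datum), but your reduction is more self-contained: it does not rely on the genericity statement about $\End(Y_i)$, which the paper asserts without proof, and it makes "generic" explicit as the complement of a meager, measure-zero union of real-analytic sets; the price is length, while the paper's argument is shorter once the endomorphism-ring fact is granted. Two cosmetic points: measure zero of the countable union follows from countable subadditivity rather than from the Baire category theorem (Baire gives density of the complement), and your closing reformulation via the period map is a heuristic gloss, not a load-bearing step--neither affects correctness.
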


\begin{proof}
 $Y_1$ has defining matrix $\left(\begin{array}{cc} 1 & a \\ a & 1\end{array}\right)$ 
and $Y_2$ is given by $\left(\begin{array}{cc} 1 & -\bar a \\ -\bar a & 1\end{array}\right)$.
The endomorphism rings are $\End(Y_1)=\End(Y_2)=\QQ[\sqrt{-d}]$.

Any possible isomorphism $\psi:Y_1\to Y_2$ should interchange the endomorphisms 
$\varphi$, up to sign. In particular, $\psi$ either preserves or permutes 
the two eigenspaces of $\varphi$. Suppose the first case (the second one is analogous). 
Let $B$ be the matrix of $\psi$ w.r.t.\ the coordinates of $K^2$. Then 
 $$
 \left(\begin{array}{cc} 1 & -\bar a \\ -\bar a & 1\end{array}\right)
 \left(\begin{array}{cc} \epsilon_1 & 0 \\ 0 & \epsilon_2\end{array}\right)
 = B \left(\begin{array}{cc} 1 & a \\  a & 1\end{array}\right).
$$
where $B$ is a matrix with coefficients in $K=\QQ[\sqrt{-d}]$, $\epsilon_1,\epsilon_2\in \CC$.
Hence we have an equation
 $$
 -\bar a= \frac{\gamma+\delta a}{\alpha+\beta a},
 $$
for some $\alpha,\beta,\gamma,\delta \in K$. This cannot happen for general $a$.
\end{proof}


\medskip

For the Weil abelian surface $Y_1$, we have a Weil class, which is of the form
$\alpha_1=2\,\Re(dw_{13})=2 \, \Re(dz_{1\bar3})=dz_{1\bar3}+dz_{\bar13}$
(recall that $Y_1$ has coordinates $(z_1,z_3)$ and $Y_2$ has coordinates $(z_2,z_4)$). In this
situation, the Weil class is a rational $(1,1)$-class. By the Lefschetz theorem,
there is a (rational) divisor $D_1$ and a corresponding holomorphic line bundle
$\cL_1=\cO(D_1)$ over $Y_1$ with $c_1(\cL_1)=[D_1]=\alpha_1$ (after multiplying
by a large integer if necessary). Note that $\alpha_1$ is primitive, so 
$\alpha_1\wedge \omega_1=0$.

For $Y_2$ we have a second Weil class $\alpha_2=-dz_{2\bar4}-dz_{\bar24}$ and a
divisor $D_2$ and line bundle 
 $\cL_2=\cO(D_2)$ with $c_1(\cL_2)=[D_2]=\alpha_2$. The rank $2$ bundle 
 $$
 E=\cL_1 \oplus \cL_2
 $$
is polystable, since the degrees of both summands are zero
(w.r.t.\ the K\"ahler form $\omega=\omega_1+\omega_2$). 
We compute 
  $$
 4\beta(E)= -(D_1-D_2)^2 = -2 dz_{1\bar13\bar3}-2dz_{2\bar24\bar4} 
  + 2dz_{12\bar3\bar4} + 2dz_{\bar1\bar234} + 2dz_{1\bar2\bar34} + 2dz_{\bar123\bar4}.
  $$
Starting with $\alpha_1'=-2\Im(dz_{1\bar3})=i\,dz_{1\bar3}-i\,dz_{\bar13}$ and 
$\alpha_2'=i\,dz_{2\bar4}-i\,dz_{\bar24}$,
we get a bundle $E'=\cL_1'\oplus \cL_2'$ such that
  $$
 4\beta(E')= -2 dz_{1\bar13\bar3}-2dz_{2\bar24\bar4} 
  + 2dz_{12\bar3\bar4} + 2dz_{\bar1\bar234} - 2dz_{1\bar2\bar34} - 2dz_{\bar123\bar4}.
  $$
The direct sum $E\oplus E'$ has 
  $$
  \beta(E\oplus E')=- dz_{1\bar13\bar3}-dz_{2\bar24\bar4} + dz_{12\bar3\bar4} + dz_{\bar1\bar234}.
  $$
Note that this class has $k=\frac23$ and it is $\Spin$-rotable via any
 $$
  c  \in \la dz_{12}+dz_{34} , dz_{13}+dz_{24}, i\,dz_{13}-i\, dz_{24} \ra .
 $$
Let $F=\End(E\oplus E')$ be the $\SU$-bundle associated 
to $E\oplus E'$.

Now take $E''=\cO(H_1)\oplus \cO(H_2)$, where $H_1,H_2$ are the divisors
corresponding to $\omega_1,\omega_2$, respectively. It is polystable w.r.t.\
$\omega=\omega_1+\omega_2$ (corresponding to the polarisation $H=H_1+H_2$), and it has
 $$
 4\beta(E'')=-(H_1-H_2)^2= 2 dz_{1\bar13\bar3}+2dz_{2\bar24\bar4} 
  - 2dz_{1\bar12\bar2} - 2dz_{1\bar14\bar4} - 2dz_{2\bar23\bar3} - 2dz_{3\bar34\bar4}.
  $$
This has $k=\frac13$, and it is $\Spin$-rotable via any 
 $$
c  \in \la dz_{12}+dz_{34}, i\,dz_{12}-i\,dz_{34}, dz_{14}+dz_{23}, i\,dz_{14}-i\,dz_{23}\ra.
 $$
Let $F'=\End E''$.

Consider the sum $F\oplus F'$. This is polystable and it has  
 $$
 \beta=- \frac12 \sum_{i<j} dz_{i\bar{i}j\bar{j}} + dz_{12\bar3\bar4} + dz_{\bar1\bar234}
= \beta_0+\omega^2\, .
  $$
Moreover, it is $\Spin$-rotable with $c =dz_{12}+dz_{34}$.

\subsection{The rotated abelian variety}\label{subsec:7.4}
Now starting with $X$ with reduced period matrix (\ref{eqn:red-per-mat}),
the rotated lattice is
   \begin{equation}\label{eqn:29A}
   \left( \begin{array}{cccc}
 1& \delta r i&  si&  0 \\
 0& -\delta s& r&  \delta i \\   
 -a_1+a_2 ri &  a_2 \delta  +a_1r \delta i &   a_1s i&  a_2s \delta i  \\  
 -a_2s & -a_1s \delta  &  a_1r+ a_2 i&  a_2 r \delta -a_1 \delta  i   
\end{array} \right.  \hspace{3cm}
  \end{equation}
$$ \hspace{3cm}
 \left. \begin{array}{cccc}
   -a_1+a_2 r i&  -a_2 \delta -a_1 r \delta i &  a_1si & -a_2s\delta i \\
   -a_2s &  a_1s \delta  &  a_1r+ a_2 i &  -a_2r \delta + a_1 \delta i\\
  1&  -\delta r i &  s i &  0\\  
 0&  \delta s&  r&  -\delta i  
\end{array} \right).
  $$
The endomorphism 
  \begin{equation}\label{eqn:29B}
 \varphi=\delta \left(\begin{array}{cccc}
  r i & s & 0 & 0 \\ -s & -r i & 0 & 0 \\ 0 & 0 & -r i & -s \\ 0 & 0 &s & r i 
 \end{array} \right)
 \end{equation}
leaves the lattice fixed, and $\varphi^2=-d \, \text{Id}$, so it gives the structure of an $L$-module.
A basis of the lattice, as an $L$-module, is given by the first, third, fifth and seventh vectors, i.e.
 $$
 \widehat\Pi=   \left( \begin{array}{cccc}
 1& si&  -a_1+a_2 r i&   a_1si \\
 0&  r&   -a_2s &  a_1r+ a_2 i \\   
 -a_1+a_2 ri &   a_1s i&  1&  s i \\  
 -a_2s &  a_1r+ a_2 i&    0&   r 
\end{array} \right).
  $$
  
The diagonalizing vectors of $\varphi$ are given by the columns of the matrix
  \begin{equation}\label{eqn:29C}
 P= \frac12 \left(\begin{array}{cccc}
 \sqrt{\frac{r + 1}2}  & 0 &  i\sqrt{\frac{r - 1}2} & 0 \\ 
 \frac{s i}{\sqrt{2(r+1)}} & 0 &- \frac{s}{\sqrt{2(r-1)}} & 0 \\ 
 0 & \sqrt{\frac{r-1}2}  & 0& i \sqrt{\frac{r + 1}2} \\ 
 0 & \frac{s i}{\sqrt{2(r-1)}} & 0 &- \frac{s}{\sqrt{2(r+1)}}
\end{array} \right),
 \end{equation}
where we have taken the vectors to be an orthonormal basis, so that the metric has
still the same (standard) form.

Now we parametrize $r^2+s^2=1$ with a single variable $x\in \RR$, by taking
  \begin{equation}\label{eqn:29D}
   r=\frac{1-x^2}{1+x^2}, \quad s= \frac{2x}{1+x^2}\, .
  \end{equation}
Recall that $s=\delta \hat s$, where $\delta =\sqrt{d}$, so that the change
of variables $x=y/\delta$ yields
 $$
   r=\frac{d-y^2}{d+y^2}, \quad \hat s= \frac{2y}{d+y^2} \, ,
 $$
for $r^2+ d \hat s^2=1$, and we clearly have that $r,\hat s\in \QQ \iff y \in \QQ$.

In the basis where $\varphi$ has a standard form
$\left( \begin{array}{cccc} i\delta & 0 & 0 &0 \\ 0 & i\delta &0 &0\\
0 & 0& -i\delta &0 \\ 0& 0& 0 & -i\delta \end{array} \right)$, the lattice is 
generated by the columns of 
 $$
 P^{-1} \widehat\Pi=  
\frac{1}{\sqrt{1+x^2}} 
 \left( \begin{array}{cccc}
 1 & x i & -\bar a & x\bar a i \\ axi & a &  -x i &1 \\
 -x & -i & x a & -ai \\  \bar a i & \bar a x& -i & x
\end{array} \right).
$$

Now $I=\frac{1}{\sqrt{d}}\varphi$ is the standard complex structure,
and the subspaces $W_\pm$ are the standard ones, $W_+=\la z_1,z_2\ra$,
$W_-=\la z_3,z_4\ra$, with respect to 
the lattice obtained by conjugating the last two rows of the previous
matrix, that is,
 \begin{equation}\label{eqn:29E}
 C=\left( \begin{array}{cccc}
 1 & x i & -\bar a & x\bar a i \\ a xi & a &  -x i &1 \\
 -x & i & x \bar a & \bar a i \\ - a i & ax& i & x
\end{array} \right)
 \end{equation}
(we are ignoring the overall scalar factor).

We want to put this lattice into standard form (that is, to do another
change of basis so that the lattice becomes the canonical lattice in
$(\CC^4,I)$), and see where the
subspaces $W_\pm$ go. For this, we have to multiply by a suitable matrix in
$U(2,2)$. If we modify the lattice, by taking a different basis, this
gets easier. Consider the numbers
$$
  f=\frac{2 i x}{1+x^2}= \frac{2 i\delta y}{d+y^2}\, , \qquad
  g=\frac{1-x^2}{1+x^2}=\frac{d-y^2}{d+y^2}\, ,
  $$
both of which are in $K=\QQ[\sqrt{-d}]$, when $y\in \QQ$.
Consider the new basis for the lattice given by
 $$
 \hat C= C \, \left( \begin{array}{cccc} 
 1 &0 & -f & 0 \\ 0 & 0 & g & 0 \\ 
 0 & 0& 0 & g \\ 0 & 1 & 0 & f
 \end{array} \right)=
  \left( \begin{array}{cccc}
 1 & x\bar a i & - xi& - 2 \bar a  \\
 axi & 1  & a & xi \\
 -x & i \bar a & -i& - x \bar a  \\
 - a i & x & -a x & i  
 \end{array} \right).
$$
As we change the basis by using rational numbers, 
the resulting torus is actually a torus
isogenous to the one we started with.

Now let us restrict to $x \in (0,1)$. We observe that $\hat C \in U(2,2)$.
To make the lattice standard, we just conjugate by (forgetting again an overall factor)
$$
 \hat C^{-1} = 
 \left( \begin{array}{cccc} 
1 & -\bar a x i& -x & -\bar a i \\
-a xi & 1 & - ai & -x \\ a & xi & ax & -i \\
-xi & - \bar a & i & -x \bar a
\end{array} \right).
$$
Note that the column vectors are orthogonal w.r.t.\ the indefinite
metric $\tilde h$.

The columns of the matrix $\hat C^{-1}$ define the subspaces $W_+$ and $W_-$. We
can transform  $\hat C^{-1}$, by multiplication on the right with a matrix in
$U(2,2)$ of the form
$\left(\begin{array}{cc} P & 0 \\ 0 & Q \end{array}\right)$, to the matrix
 \begin{equation}\label{eqn:BT}
 \left( \begin{array}{cc} I & \bar B^T \\ B & I \end{array} \right),
 \end{equation}
where
 $$
 B=\frac{1}{1+x^2|a|^2} \left( \begin{array}{cc} a(1-x^2) & x (1+ |a|^2)i \\
                                -x (1+ |a|^2) i & -\bar a(1-x^2) 
                               \end{array} \right).
 $$
The columns of (\ref{eqn:BT}) also define the subspaces $W_+$ and $W_-$.

The defining matrix (\ref{eqn:BT}) corresponds to an abelian variety of Weil type for 
 \begin{align*}
   \tilde a &= \frac{a(1-x^2)}{1+x^2|a|^2}\, ,\\
   \tilde b &= -\frac{x (1+ |a|^2)}{1+x^2|a|^2} i \, .
 \end{align*}
Recall that Theorem \ref{thm:rotation} says that the class $\beta$ is an algebraic cycle for the
Weil abelian variety $X'$ thus obtained. 

\subsection{Which type of abelian variety is $X'$?} 
Recall that we are taking $y\in \QQ$. This guarantees that $X'$ is an abelian variety.
The endomorphism ring of $X$ is $\End (X)= \End Y_1 \x \End Y_2= K\x K$, 
by Lemma \ref{lem:product-non-trivial}. 
Now we want to compute $\End (X')$.

Doing the change of variables $x=y/\delta$, we get
\begin{equation}\label{eqn:ath}
\begin{aligned}
   \tilde a &= \frac{a(d-y^2)}{d+y^2|a|^2} \, ,\\
   \tilde b &= -\delta \, \frac{y (1+ |a|^2)}{d+y^2|a|^2}i =   \varpi i\, .
 \end{aligned}
\end{equation}
A pair $(\tilde a, \varpi)\in \CC\x \RR$ 
can appear as in (\ref{eqn:ath}) if it satisfies the 
equation
 \begin{equation}\label{eqn:relation}
 q (1+|\tilde a|^2+ \varpi^2)+ \varpi i= 0,
 \end{equation}
where $ q=\frac{y \delta i}{y^2+d}$. Note that $y\in\QQ$ implies that $q\in K$.



Equation (\ref{eqn:relation}) gives a smoothly varying family of 
K\"ahler $4$-tori (of Weil type) for all real values $y\in (0,\delta)$, 
although we only know that $\beta$ is HYM for $y\in \QQ$.
Let $X'$ be the K\"ahler $4$-torus for given $(a,\varpi,y)$ 
satisfying (\ref{eqn:relation}). The groups $\End (X')$ form a family.
In particular, for small $y$ we have that $\dim \End (X')\leq 4$.

Let $\psi$ be an endomorphism of $X'$. 
Suppose that $\psi$ commutes with the action of $\varphi$. 
Then either it preserves or it swaps the 
$\pm i \delta$-eigenspaces. Suppose the first case. 
The argument of Lemma \ref{lem:product-non-trivial}
says that there is some matrix $M\in \GL(4,K)$ such that 
the defining matrix satisfies
 $$
 \left( \begin{array}{cc} I & \bar B^T \\ B & I \end{array}\right)
 =
  \left( \begin{array}{cc} P_1 & 0 \\ 0 & P_4 \end{array} \right)
   \left( \begin{array}{cc} I & \bar B^T \\  B & I \end{array}\right)
  \left( \begin{array}{cc} M_1 & M_2 \\ M_3 & M_4 \end{array}\right).
$$
This gets rewritten as
$$
 (M_3+M_4 B)(M_1+M_2 B)^{-1}=B
$$
or equivalently,
 \begin{equation} \label{eqn:B-equiv}
BM_2 B+ BM_1-M_4 B-M_3 =0.
 \end{equation}
In our situation, we have the equation (\ref{eqn:relation}). For fixed $q$, we take 
$(\tilde a,\varpi)$ as generic as possible within this constraint. More explicitly,
writing $\tilde a=a_1 + ia_2$,
we ask that any two of $a_1,a_2, \varpi$ are algebraically independent, and 
also the three $a_1,a_2,\varpi$ are $K$-independent. An easy calculation shows 
that under these
circunstances, (\ref{eqn:B-equiv}) implies that 
 \begin{equation} \label{eqn:B-equiv-M}
 M= \left( \begin{array}{cc|cc} \alpha & 0 & 0 &\gamma \\ 0 & \beta & \gamma & 0 \\ \hline
 0 &\gamma & \alpha & 0 \\ \gamma & 0 & 0 & \beta \end{array} \right),
 \end{equation}
where $\gamma= q\alpha - q\beta$. 
This produces a vector space over $K$ of dimension $2$. Hence this constitutes the
full endomorphism ring, and it is so for any value of $y$ (even not generic ones).


The endomorphism ring of $X'$ is generated by two elements, $\id$ and
$$
F=\left(\begin{array}{cccc} 1 & 0 & 0 & 2q \\ 0 & -1 & 2q & 0 \\ 0 & 2q & 1 & 0\\
2q & 0 &0 &-1 \end{array}\right).
$$
We have that $F^2= (4q^2+1) \id$. Therefore $\End(X')= \QQ[\sqrt{-d},\sqrt{4q^2+1}]$,
which is an imaginary quadratic extension of a degree $2$ totally real field.
So $X'$ is of type $VI(4,1)$ in the classification of abelian varieties (see Appendix B in \cite{survey}).
For these abelian varieties, all Hodge classes are algebraic, because they are products of divisors.

\subsection{Discussion}
In Section \ref{sec:rotating-Weil} we have given an example of a Weil abelian variety $X$
for which there is a (poly)stable vector bundle $E\to X$ which is $\Spin$-rotable.
This produced another Weil abelian variety $X'$ with another (poly)stable vector bundle
whose second Chern class has a non-zero component Weil class. Therefore, such Weil class is
algebraic. The fact that it is a product of divisors has a geometric explanation: the 
$\Spin(7)$-connection on the bundle is actually decomposable ($E$ splits as a direct sum
of line bundles with $\Spin(7)$-connections), therefore the same must happen for the
(poly)stable bundle on $X'$. 

Nonetheless, the $\Spin$-rotations relate abelian varieties of very different geometric and
arithmetic nature. This can be used to produce stable bundles on abelian varieties
and hence to prove algebraicity of Hodge classes. For obtaining deeper consequences, one should
start with non-decomposable stable bundles. 

Moreover, these $\Spin$-rotations produce also K\"ahler non-algebraic tori (e.g. when $y\notin\QQ$
in the situation of Section \ref{sec:rotating-Weil}). Therefore the construction is useful
to produce stable bundles on K\"ahler tori.

\end{document}